\newtheorem{thm}{Theorem}[section]
\newtheorem{lemma}[thm]{Lemma}
\newtheorem{prop}[thm]{Proposition}
\newtheorem{cor}[thm]{Corollary}
\theoremstyle{remark}
\newtheorem{remark}[thm]{Remark}
\theoremstyle{definition}
\newtheorem{question}[thm]{Question}
\newtheorem{defines}[thm]{Definitions}
\newtheorem{example}[thm]{Example}
\newcommand{\E}{\mathbf{E}}
\renewcommand{\P}{\mathbf{P}}
\newcommand{\1}{\mathbf{1}}
\newcommand{\Poi}{\mathrm{Poi}}
\newcommand{\Ber}{\mathrm{Bernoulli}}
\newcommand{\Bb}{\mathcal{B}}
\newcommand{\Gg}{\mathcal{G}}
\newcommand{\Ggg}{\mathscr{G}}
\newcommand{\Ww}{\mathcal{W}}
\renewcommand{\emptyset}{\varnothing}
\newcommand{\Tt}{\mathcal{T}}
\newcommand{\inftrees}{\Tt_{\text{inf}}}
\newcommand{\lambdacrit}{\lambda_{\text{crit}}}
\newcommand{\NN}{\mathbb{N}}
\renewcommand{\1}{\mathbf{1}}
\newcommand{\bigmid}{\;\big\vert\;}
\newcommand{\Bigmid}{\;\Big\vert\;}
\newcommand{\GW}{\mathrm{GW}}
\newcommand{\Tpiv}{T_{\mathrm{piv}}}
\DeclarePairedDelimiter\abs{\lvert}{\rvert}%
\newcommand{\Imax}{I^{\max}}
\newcommand{\Ff}{\mathcal{F}}
\newcommand{\Cc}{\mathcal{C}}
\newcommand{\Fff}{\mathscr{F}}
\newcommand{\Aa}{\mathcal{A}}
\newcommand{\zs}{{0\mathrm{s}}}
\newcommand{\zd}{{0\mathrm{d}}}
\newcommand{\os}{{1\mathrm{s}}}
\newcommand{\od}{{1\mathrm{d}}}
\newcommand{\Ts}{\mathcal{T}_{\mathrm{s}*}}
\newcommand{\Tcol}{\mathcal{T}_{\mathrm{col}}}
\newcommand{\Tcolstar}{\mathcal{T}_{\mathrm{col}*}}
\newcommand{\chicol}{\chi_{\mathrm{col}}}
\newcommand{\chicolstar}{\chi_{\mathrm{col}*}}
\newcommand{\Ttaut}{\Tt^{\mathrm{taut}}}
\newcommand{\RST}{\mathrm{RST}}
\newcommand{\Tspine}{T^{\text{spine}}}
\author{Tobias Johnson}
\address{Department of Mathematics, College of Staten Island}
\email{tobias.johnson@csi.cuny.edu}
\author{Moumanti Podder}
\address{NYU-ECNU Institute of Mathematical Sciences, New York University, Shanghai}
\email{mpodder3@math.gatech.edu}
\author{Fiona Skerman}
\address{Department of Informatics, Masaryk University}
\email{skerman@fi.muni.cz}
\thanks{T.J.\ received support from NSF grants DMS-1401479 and DMS-1811952. % and PSC-CUNY Award \#61540-00~49. 
M.P. acknowledges 
partial support from NSF CAREER grant CCF:AF-1553354.}
\title[Random tree recursions]{Random tree recursions: which fixed points correspond to tangible sets of trees?}
\keywords{Galton--Watson tree, fixed point, tree automaton, interpretation, recursive distributional
equation, endogeny}
\subjclass[2010]{60J80, 60J85}
\begin{document}
\bibliographystyle{amsplain}

\begin{abstract}
  Let $\Bb$ be the set of rooted trees containing an infinite binary subtree starting at the root.
  This set satisfies the metaproperty that a tree belongs to it if and only if 
  its root has children $u$ and $v$ such that the subtrees rooted at $u$ and $v$ belong to it.
  Let $p$ be the probability that a Galton--Watson tree falls in $\Bb$. 
  The metaproperty makes $p$ satisfy a fixed-point equation, 
  which can have multiple solutions. One of these solutions is $p$, 
  but what is the meaning of the others? In particular, are they probabilities
  of the Galton--Watson tree falling into other sets satisfying the same metaproperty?
  We create a framework for posing questions of this sort, and we classify solutions to fixed-point
  equations according to whether they admit probabilistic interpretations.
  Our proofs use spine decompositions of Galton--Watson trees and the analysis of Boolean functions.
\end{abstract}

\maketitle

\section{Introduction}

A seminal problem in discrete probability is to determine the probability of survival
of a Galton--Watson tree. For the sake of simplicity, suppose that the offspring distribution is
$\Poi(\lambda)$, and denote the tree by $T_\lambda$. Let $\inftrees$ denote the set of infinite rooted trees. Let $p$ denote the survival
probability, given by $\P[T_\lambda\in\inftrees]$.
The typical solution gives $p$ as a fixed point of a map as follows: 
Let $Z$ be the number of children $v$ of the root of $T_\lambda$ such that the subtree rooted at $v$ is infinite. Each subtree is infinite with probability~$p$, just like the original tree. Thus
$Z \sim \Poi(p \lambda)$ by Poisson thinning. Since $T_{\lambda}$ is infinite if and only if $Z \geq 1$,
\begin{align}\label{eq:infinite.recurrence}
  p &= 1 - e^{-\lambda p}.
\end{align}
As is well known (see \cite{athreya-ney}), when $\lambda>1$, this equation has two solutions, and the positive one is the true value of $p$.
In arriving at \eqref{eq:infinite.recurrence}, the only property of $\inftrees$ we used was that $t\in\inftrees$ if and only if there exists some child $v$ of the root of $t$ such that the subtree descending from $v$
is in $\inftrees$. Let us call this the metaproperty of $\inftrees$ that yields
\eqref{eq:infinite.recurrence}.
%Thus, we make use only of the self-similarity of the branching process, and, in some sense, knowledge about the children of the root is enough to infer about the root itself. 

Again, let $T_\lambda$ be a Galton--Watson tree with child distribution $\Poi(\lambda)$.
It is natural to ask if there is some other set of trees $\Tt_0$ satisfying the metaproperty
such that $\P[T_\lambda\in\Tt_0]$ is the other solution to \eqref{eq:infinite.recurrence}, which is $0$. 
A bit of thought reveals that $\Tt_0=\emptyset$ fits this criteria. Vacuously, $t\in\emptyset$
if and only if the root of $t$ has a child whose subtree is in $\emptyset$, and clearly $\P[T_\lambda\in\emptyset]=0$. Thus, the metaproperty yields an equation with two solutions, and each solution gives the probability
under the Galton--Watson measure of a set of trees satisfying the metaproperty.
Indeed, we will later see that $\inftrees$ and $\emptyset$ are the \emph{only} two sets of trees
satisfying the metaproperty, up to measure zero changes under the Galton--Watson measure with
child distribution $\Poi(\lambda)$ (see Remark~\ref{rmk:measure.zero} for more discussion
on measure zero changes).

This work was motivated by a nearly identical example that is more difficult to resolve. This time, we consider sets of trees $\Bb$ where $t\in\Bb$ if and only if the root of $t$ has at least \emph{two} children $u$ and $v$ whose subtrees are in $\Bb$. Let us call this metaproperty the \emph{at-least-two rule}. 
Suppose $p=\P[T_{\lambda}\in\Bb]$ for some set of trees $\Bb$ obeying the at-least-two rule. Invoking Poisson thinning and self-similarity as in the first example, we get
\begin{align}\label{eq:at.least.two.recurrence}
  p &= 1 - e^{-\lambda p}(1+\lambda p).
\end{align}
As explained in \cite{PSW}, which investigated the existence of a giant $3$-core in a
random graph, there is a critical parameter $\lambdacrit\approx 3.35$
where this equation changes behaviour (see Figure~\ref{fig:at.least.two.phases}).
\begin{figure}
\centering
    \begin{tikzpicture}[yscale=2.75]
      \draw (0,0) -- (0,1);
%      \draw (3.35,.1)--(3.35,-.1) node[below] {$3.35$}
%            (-.1,.535) node[left] {$.53$} -- (.1,.535);
      \foreach \x in {.2, .4, .6, .8, 1}
%        \draw (.1, \x)--(0,\x) node[left,node font=\small] {$\x$};
 \draw (.1, \x)--(0,\x) node[left,font=\small] {$\x$};
      \foreach \x in {1, 2, 3, 4, 5}
%        \draw (\x, .03636)--(\x, 0) node[below,node font=\small] {$\x$};
\draw (\x, .03636)--(\x, 0) node[below,font=\small] {$\x$};
 %     \draw (0,0) node [left,node font=\small] {$0$};
      \draw (0,0) node [left,font=\small] {$0$};
      \path (-.8, .5) node {$p$}
            (2.5, -.25) node {$\lambda$};
      \draw (0,0) -- (5,0);
      \draw[very thick,green] plot file{iota2_upper.table};
      \draw[very thick,blue] plot file{iota2_lower.table};
      \draw[very thick,red] (0,0)--(5,0);                                                
    \end{tikzpicture}
\label{fig:at.least.two.phases}
\caption{A plot showing all $p$ satisfying \eqref{eq:at.least.two.recurrence} for given $\lambda$.
For $\lambda<\lambdacrit\approx 3.35$, the only solution to \eqref{eq:at.least.two.recurrence}
is $p=0$. For $\lambda=\lambdacrit$, there are two solutions for $p$, and for $\lambda>\lambdacrit$,
there are three.}
\end{figure}
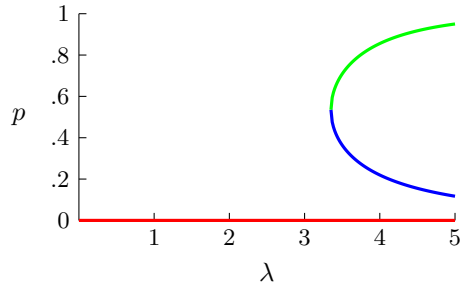
For all $\lambda>0$, there is a trivial solution to \eqref{eq:at.least.two.recurrence} given by $p=0$.
When $\lambda<\lambdacrit$, this is the only solution.
At $\lambda=\lambdacrit$, a second solution emerges, and when $\lambda>\lambdacrit$ there are three
solutions. (We prove these statements in Example~\ref{ex:answer}.)
Let $\Bb_0$ be the set of all trees that contain an infinite binary subtree starting at the root.
Note that $\Bb_0$ satisfies the at-least-two rule. 
It was shown by Dekking \cite{Dekking} (also see \cite{PakesDekking}) that $\P[T_\lambda\in\Bb_0]$ is the largest solution to \eqref{eq:at.least.two.recurrence} when $\lambda>\lambdacrit$, shown in green in Figure~\ref{fig:at.least.two.phases}.
An immediate intuition as to why the green curve is the one corresponding to $\P[T_\lambda\in\Bb_0]$ is that this is the only curve which is increasing in $\lambda$. Another set of trees obeying the at-least-two rule
 is the empty set. Obviously, $\P[T_\lambda\in\emptyset]=0$, the smallest solution to \eqref{eq:at.least.two.recurrence}, shown in red in Figure~\ref{fig:at.least.two.phases}. 
 Joel Spencer posed the question that set this work in motion:
 is there a set of trees to go with the middle solution (shown in blue in Figure \ref{fig:at.least.two.phases})? More formally, the question asks the following:
\begin{question}[Spencer]\label{q:spencer}
  Let $T_\lambda$ be a Galton--Watson tree with child distribution $\Poi(\lambda)$.
Say that a set of trees $\Bb$ follows the at-least-two rule if $t\in\Bb$ if and only if the root of $t$ has two children $u$ and $v$ such that the subtrees rooted at $u$ and $v$ are also in $\Bb$. Suppose that $\lambda>\lambdacrit$. Does there exist a set of trees $\Bb$ following the at-least-two rule such that $\P[T_\lambda\in\Bb]$ is the middle solution of \eqref{eq:at.least.two.recurrence}?
\end{question}

%  For any $\lambda>\lambdacrit$, does there exist a set of trees $\Bb$ with the following properties:
%  \begin{enumerate}[(i)]
%    \item $t\in\Bb$ if and only if the root of $t$ has
%      two children $u$ and $v$ such that $t(u)\in\Bb$ and $t(v)\in\Bb$;
%    \item if $T$ is a ,
%      then $
%  \end{enumerate}

We answer this question in the negative. More generally,
our main result, Theorem~\ref{main 2 colours}, gives the answer to \emph{any} question of this form. In the language of this paper, it is a criterion for which fixed points of tree automata admit interpretations. In this example, the tree automaton refers to the at-least-two
metaproperty. For the Galton--Watson child distribution $\Poi(\lambda)$,
the fixed points of this automaton are the solutions to \eqref{eq:at.least.two.recurrence}. An interpretation corresponds to a set of trees following the metaproperty given by the automaton. Theorem~\ref{main 2 colours} shows that $\Bb_0$ and $\emptyset$ are the only two sets of trees following the at-least-two rule, up to measure zero changes under the Galton--Watson measure with child distribution $\Poi(\lambda)$.
\begin{remark}\label{rmk:measure.zero}
  It is important that we consider sets of trees satisfying a metaproperty only up to measure
  zero changes under a Galton--Watson measure with a given child distribution.
  For example, let $\Tt$ be the set of trees that contain an infinite binary subtree
  somewhere within them (i.e., not necessarily starting from the root).
  This set satisfies our original metaproperty: a tree is in $\Tt$ if and only if its root has
  at least one child initiating a tree in $\Tt$. 
  But on its face, $\Tt$ is neither $\inftrees$ nor $\emptyset$,
  which we claimed were the only sets of trees satisfying this metaproperty. The solution
  to this apparent paradox is that from the perspective of the Galton--Watson tree $T_\lambda$
  with child distribution $\Poi(\lambda)$, the set $\Tt$ is in fact equivalent to either
  $\inftrees$ or $\emptyset$. For $\lambda<\lambdacrit$, there is zero probability that
  $T_\lambda$ lies in $\Tt$, and hence $\Tt$ is a measure zero change away from $\emptyset$.
  For $\lambda\geq\lambdacrit$, the tree $T_\lambda$ falls in $\Tt$ with probability~$1$ given that
  $T_\lambda$ is infinite. Hence $\Tt$ is a measure zero change away from $\inftrees$ in this case.
\end{remark}

\subsection{Summary of main result}
We start by giving a nonrigorous version of our main result, since
it will take some effort to state all the definitions we need for a formal statement.
A \emph{tree automaton} is a set of rules determining
the colour of a parent vertex in a tree from the colour of its children. 
Let $\Sigma$ be a finite set representing the possible colours. The automaton
corresponding to the at-least-two rule acts on colours $\Sigma=\{0,1\}$, 
assigning colour~$1$ to the parent if and
only if it has at least two children of colour~$1$. A \emph{fixed point} of a tree automaton
is a probability distribution $\vec{\nu}$ on $\Sigma$ such that if a Galton--Watson tree is generated
and the children of the root are assigned i.i.d.-$\vec{\nu}$ colours, then the colour of the root
induced by the automaton is also distributed as $\vec{\nu}$. For the example presented earlier,
the fixed points have the form $\Ber(p)$, where $p$ satisfies \eqref{eq:at.least.two.recurrence}.
To define an \emph{interpretation} of a tree automaton, suppose we have a map $\iota\colon\Tt\to\Sigma$,
where $\Tt$ is the space of rooted trees. Now, imagine colouring each vertex $v$
in an arbitrary tree by applying $\iota$ to the subtree rooted at $v$. If the resulting
colouring of the tree is always consistent with the rules given by the tree automaton, then we call
the map an interpretation of the automaton. We saw two interpretations in our earlier example: 
the first mapped a tree to $0$ or $1$ depending on whether it contained an infinite binary tree
starting at its root, and the second mapped all trees to $0$.

It is not hard to see that the colour of a Galton--Watson tree assigned by an interpretation
of an automaton must be distributed as a fixed point of the automaton (see
Lemma~\ref{iota distribution}). For example, if $\iota$
is the first interpretation described above and $T$ is a Galton--Watson tree with
child distribution $\Poi(\lambda)$, then $\iota(T)$ is distributed as $\Ber(p_2)$, where $p_2$ is the largest
solution to \eqref{eq:at.least.two.recurrence}. Our main result flips this around,
letting us determine for a given fixed point $\vec{\nu}$
whether there exists an interpretation $\iota$ such that $\iota(T)\sim\vec\nu$.

The criterion is based on an object we call the \emph{pivot tree}. Essentially, first
generate the Galton--Watson tree to level~$n$. Then, randomly colour the vertices at level~$n$
by sampling independently from the given fixed point. Apply the automaton
to colour the vertices at levels $0$ to $n-1$. Now, call a vertex \emph{pivotal} for this colouring
if altering its colour and recolouring all of its ancestors by the automaton
alters the colour of the root (see Figure~\ref{fig:pivotal.two.state}). 
The set of all pivotal vertices to level~$n$ then
forms a random subtree of the original Galton--Watson tree. There is a natural way to extend this construction
beyond a fixed $n$ to give a (possibly) infinite tree, the \emph{pivot tree}, which turns out
to be multitype Galton--Watson.
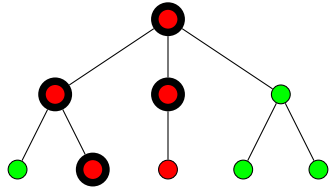
\begin{figure}
  \begin{tikzpicture}[zero/.style={circle,draw=black,fill=red,inner sep=0,
    minimum size=0.25cm}, 
      bzero/.style={circle,draw=black, fill=red, inner sep=0, minimum size=0.35cm,line width=.1cm},
      unknown/.style={circle,draw=black,fill=gray,inner sep=0,
    minimum size=0.25cm},
      one/.style={circle,draw=black,fill=green,inner sep=0,
        minimum size=0.25cm}]
    \path (0,0) node[bzero] (R) {};
    \path (R) + (-1.5,-1) node[bzero] (R1) {}    
                              + (0,-1) node[bzero] (R2) {};
    \path (R)+ (1.5,-1) node[one] (R3) {};
    \path (R1) + (-.5,-1) node[one] (R11) {}
          (R2) + (0,-1) node[zero] (R21) {}
          (R3) + (-.5,-1) node[one] (R31) {}
          (R3) + (.5,-1) node[one] (R32) {};
    \path (R1) + (.5,-1) node[bzero] (R12) {};
    \path[draw] 
        (R)--(R1) (R)--(R2) (R)--(R3)
        (R1)--(R11) (R1)--(R12) (R2)--(R21) (R3)--(R31) (R3)--(R32);    
    \end{tikzpicture}
  \caption{The first three levels of a tree coloured consistently with the at-least-two automaton
    given in Example~\ref{ex:at.least.two}. Red denotes state~$0$ and green denotes state~$1$.
    Vertices in bold are pivotal, meaning that flipping their colours and recolouring above them
    according to the automaton
    causes the root to flip colours.}
  \label{fig:pivotal.two.state}
\end{figure}

Loosely speaking, the main result of this paper is that when $\abs{\Sigma}=2$,
a given fixed point of a tree automaton
has a corresponding interpretation if and only if the associated pivot tree is subcritical or critical
(or equivalently, if it is almost surely finite).
If so, then it has precisely one interpretation, up to measure zero changes with respect to the
Galton--Watson measure. This criterion is quite practical to check, 
and we do so for the at-least-two automaton
and some other examples in Section~\ref{sec:examples}.

When $3\leq\abs{\Sigma}<\infty$, we prove only that a subcritical pivot tree implies existence of
an interpretation. We believe that our approach in this paper can be adapted to prove that a supercritical
pivot tree implies nonexistence of an interpretation, but there are several complications
(see Remark~\ref{rmk:kstate.supercritical}).

We now proceed to define these terms more formally. We then state our main results
in Section~\ref{sec:main.results}.

\subsection{Notation} \label{notations}
We define $\mathcal{T}$ to be the set of locally finite, ordered, rooted trees (\emph{ordered}
means that an ordering is given for the children of each vertex).
This set can be viewed as a metric space (see \cite[Exercise~5.2]{LP}), which we endow
with its Borel $\sigma$-algebra to make a measure space.
Our results will be for Galton--Watson trees with general child distributions, sometimes
under mild moment conditions. We will typically
denote the tree by $T$ and the child distribution by $\chi$.
We always assume that $\chi$ puts positive probability on $\{2,3,\ldots\}$,
so that $T$ is a true tree.
For any tree $t\in\Tt$, we let $V(t)$ denote its vertex set and $R_{t}$ its root. 
%For $v \in V(t)$, let $d(v)$ denote the generation of $v$ in $t$, where the root is at generation~$0$. 
% I think we never use this notation anywhere.
Let $t(v)$ denote the subtree of $t$ made up of $v$ and its descendants. We let $t|_{n}$ denote the tree obtained by truncating $t$ beyond its $n$th generation and $[t]_n\subseteq\Tt$ the set of trees
that match $t$ up to the $n$th generation, where the root is considered to belong to generation~$0$.
Let $L_{n}(t)$ denote the set of all nodes of $t$ in generation~$n$, and let $\ell_n(t)=\abs{L_n(t)}$.
We abbreviate $L_{n}(T)$ by $L_{n}$ and $\ell_n(T)$ by $\ell_n$.

We will often work with \emph{coloured trees}, defined as a pair $(t,\tau)$ consisting
of a tree $t\in\Tt$ together with a colouring $\tau\colon V(t)\to\Sigma$.
We denote the space of coloured trees as $\Tcol$, taking the set of colours $\Sigma$
as fixed in advance. For $(t,\tau)\in\Tcol$, let $[t,\tau]_n\subseteq\Tcol$ denote the set
of coloured trees that match $(t,\tau)$ up to the $n$th generation.

\subsection{Tree automata}\label{tree automata}
Let $\Sigma$ denote a finite set, to be thought of as colours or states. A \emph{tree automaton} on the states $\Sigma$ is essentially a set of rules for determining the state of a parent in the tree from the states of its children. Formally, we define an automaton as a map $A\colon\NN_0^{\Sigma}\to\Sigma$, where $\NN_0=\NN\cup\{0\}$. The vector $\vec{n} = \big(n_{\sigma}: \sigma \in \Sigma\big) \in \NN_0^\Sigma$ represents the count of children in each state, and $A(\vec{n})$ represents the state assigned to the parent. 

\begin{example}[At-least-two automaton]\label{ex:at.least.two}
  We define an automaton $A$ on states $\Sigma=\{0,1\}$ that 
  assigns state~$1$ to the parent if and only if at least two of 
  its children have state~$1$. Formally, the automaton is
  the map $(n_0,n_1)\mapsto\1\{n_1\geq 2\}$.
  As we mentioned, this automaton is implicit in Question~\ref{q:spencer}.
\end{example}

Tree automata are of interest in logic and theoretical computer science. In these settings, they typically
act on trees with vertex labels rather than plain trees, and there are some restrictions on them.
See \cite{automata_1} and \cite[Chapter~7]{Libkin} for more details on automata for finite trees, and
\cite[Section~6]{Thomas} for more on infinite trees.
Tree automata can be used to determine
which sets of trees can be defined by a given logic. For example, call a set of trees \emph{regular}
if there exists a tree automaton so that a tree falls into the set if and only if the automaton
assigns its root one of a set of accepted states. A set of finite trees is definable in
monadic second-order logic if and only if it is regular \cite[Theorem~7.30, Theorem~7.34]{Libkin}.
A similar statement holds for infinite trees as well \cite[Theorem~6.19]{Thomas}. 
We will revisit logic in Section~\ref{sec:connections}, after we state our results.

For a given tree $t$, we say that an assignment of colours $\tau\colon V(t)\to\Sigma$ is \emph{compatible with the automaton~$A$} if for every $v \in V(t)$, we have 
\begin{equation}\label{compatible}
\tau(v) = A(\vec{n}), 
\end{equation}
where $\vec{n} = (n_{\sigma}\colon \sigma \in \Sigma)$ and $n_{\sigma}$ is the number of children of $v$ that are coloured $\sigma$ under $\tau$. If $t$ is finite, there is only one colouring compatible with $A$. At each leaf, this colouring takes the value $A(0,\ldots,0)$, and then the automaton determines the colours of all other vertices. When $t$ is infinite, however, there are typically many assignments compatible with a given automaton.

\subsection{Interpretations}\label{interpretations}

An interpretation of an automaton is a deterministic classification of trees into the states of $\Sigma$ such that the state of a tree can be computed from the states of the subtrees descending from the children of its root, according to the rules of the automaton. For example, assign a tree state~$1$ if it contains an infinite binary subtree starting at its root, and assign it state~$0$ otherwise. This is an interpretation of the at-least-two automaton of Example~\ref{ex:at.least.two}, since a tree $t$ has state~$1$ if and only if its root $R_{t}$ has at least two children $u, v$ with subtrees $t(u), t(v)$ in state~$1$.

Formally, we define an interpretation as follows. Let $\chi$ be a probability measure on the nonnegative integers, and let $\GW(\chi)$ denote the Galton--Watson measure on $\Tt$ with child distribution $\chi$. We call a
measurable map $\iota \colon \mathcal{T} \rightarrow \Sigma$ an \emph{interpretation of the automaton~$A$ under $\GW(\chi)$}, if for a.e.-$\GW(\chi)$ tree $t\in\Tt$, the colouring $\tau\colon V(t) \rightarrow \Sigma$, defined as 
\begin{equation}\label{interpretation}
\tau(v) = \iota(t(v)), \text{ for all } v \in V(t),
\end{equation}
is compatible with $A$. Typically, we will call $\iota$ an interpretation of $A$ without mentioning $\GW(\chi)$, since the offspring distribution will be fixed throughout. For many interpretations, including our example of assigning a tree $1$ if it contains an infinite binary subtree from the root, the compatibility condition holds for every tree in $\Tt$, and the measure $\GW(\chi)$ is irrelevant.

\subsection{Fixed points and their connections with interpretations}\label{fixed points}
Let $T\sim\GW(\chi)$. If $\iota\colon\Tt\to\Sigma$ is an interpretation of the automaton~$A$ under $\GW(\chi)$, then the distribution of $\iota(T)$ is constrained by the self-similarity of $T$. For example, if $\chi\sim\Poi(\lambda)$ and $\iota$ is an interpretation of the at-least-two automaton of Example~\ref{ex:at.least.two}, then $\iota(T)\sim\Ber(p)$, where $p$ satisfies \eqref{eq:at.least.two.recurrence}.

We now describe these constraints on the distribution of $\iota(T)$ when $\iota$ is an interpretation of a general automaton $A$ and $T\sim\GW(\chi)$. Let $D$ denote the set of all probability distributions on $\Sigma$ (as $\Sigma$ is finite, $D$ is a finite-dimensional simplex). We define a map $\Psi\colon D \rightarrow D$ that we call the \emph{automaton distributional map corresponding to $A$ and $\chi$}, as follows. Fix $\vec{x} \in D$. 
Consider a random tree whose root has children according to $\chi$. To each child, mutually independently, we attach a random state in $\Sigma$ that follows the distribution $\vec{x}$. For every realization of this random procedure, we determine the state at the root using the rules of the automaton $A$. We then set $\Psi(\vec{x})$
to be the distribution of the random state thus induced at the root.

\begin{lemma}\label{iota distribution}
  Let $T\sim\GW(\chi)$.
If $\iota$ is an interpretation for the tree automaton $A$, then the distribution of $\iota(T)$ is a fixed point of the automaton distribution map $\Psi$.
\end{lemma}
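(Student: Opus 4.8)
The plan is to exploit the recursive self-similarity of the Galton--Watson tree together with the defining property of an interpretation. Write $\vec{\nu}$ for the distribution of $\iota(T)$, which is a probability measure on $\Sigma$; since $\Sigma$ is finite, $\vec{\nu} \in D$. I want to show $\Psi(\vec{\nu}) = \vec{\nu}$, where $\Psi$ is the automaton distributional map corresponding to $A$ and $\chi$.

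First I would set up the standard decomposition of $T \sim \GW(\chi)$: the root has $N \sim \chi$ children $v_1, \ldots, v_N$, and conditionally on $N$, the subtrees $T(v_1), \ldots, T(v_N)$ are i.i.d.\ $\GW(\chi)$. This is the defining property of the Galton--Watson measure. Applying the measurable map $\iota$ to each subtree, the colours $\iota(T(v_1)), \ldots, \iota(T(v_N))$ are, conditionally on $N$, i.i.d.\ with common distribution $\vec{\nu}$ (each $T(v_i) \sim \GW(\chi)$, so $\iota(T(v_i)) \sim \vec{\nu}$ by definition of $\vec{\nu}$). Now here is where the interpretation hypothesis enters: because $\iota$ is an interpretation of $A$ under $\GW(\chi)$, for a.e.-$\GW(\chi)$ tree $t$ the colouring $v \mapsto \iota(t(v))$ is compatible with $A$, so in particular the root constraint \eqref{compatible} holds: $\iota(t) = A(\vec{n})$ where $n_\sigma$ counts the children of the root whose subtree has $\iota$-colour $\sigma$. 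Applied to $T$, this says that almost surely $\iota(T) = A(\vec{M})$, where $\vec{M} = (M_\sigma : \sigma \in \Sigma)$ records the number of root-children $v_i$ with $\iota(T(v_i)) = \sigma$.

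Now I compare distributions. On one hand, $\iota(T) \sim \vec{\nu}$ by definition. On the other hand, $\iota(T) = A(\vec{M})$ a.s., so $\iota(T)$ is distributed as $A$ applied to the random vector $\vec{M}$. But $\vec{M}$ is exactly the count vector produced by: taking $N \sim \chi$ children and attaching to each, mutually independently, a colour with law $\vec{\nu}$ --- which is precisely the random procedure used to define $\Psi(\vec{\nu})$. Hence the distribution of $A(\vec{M})$ equals $\Psi(\vec{\nu})$ by definition of $\Psi$. Combining, $\vec{\nu} = \Psi(\vec{\nu})$, i.e.\ $\vec{\nu}$ is a fixed point of $\Psi$.

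The only subtlety --- and the single point deserving care rather than a genuine obstacle --- is the handling of the null set: the compatibility relation \eqref{interpretation} holds only for a.e.-$\GW(\chi)$ tree, so the identity $\iota(T) = A(\vec{M})$ holds only almost surely, which is nonetheless enough since it yields equality of distributions. One should also note that $\iota$ being measurable guarantees $\iota(T)$ and the $\iota(T(v_i))$ are genuine random variables, and that the conditional i.i.d.\ structure of the subtrees is preserved under applying the fixed measurable map $\iota$; both are routine. I expect no further difficulty.
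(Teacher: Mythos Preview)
Your proof is correct and follows essentially the same approach as the paper's: both exploit the recursive self-similarity of the Galton--Watson tree to see that the children's $\iota$-colours are i.i.d.\ with the same law as $\iota(T)$, and then use the compatibility of $\iota$ with $A$ at the root to identify this law as a fixed point of $\Psi$. Your write-up is, if anything, slightly more explicit about the almost-sure caveat and the measurability points than the paper's.
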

\begin{proof}
Let the distribution of $\iota(T)$ be $\vec{y} = (y_{\sigma}: \sigma \in \Sigma)\in D$. Let $\tau\colon V(T) \rightarrow \Sigma$ be the assignment defined by $\tau(v) = \iota(T(v))$ for all $v \in V(T)$, which is almost surely compatible with $A$ by definition of interpretation. 

Under the labeling $\tau$, the 
state of the root $R_{T}$ is distributed as $\vec{y}$, since $\tau(R_T)=\iota(T)$. On the other hand, $R_{T}$ has children according to the distribution $\chi$; each of these children has an independent copy of $T$ descending from it. So, from the definition of $\tau$, the children of $R_{T}$ have i.i.d.\ labels distributed as $\vec{y}$. Hence the corresponding label at the root is $\Psi(\vec{y})$, by definition of $\Psi$. This shows that $\Psi(\vec{y}) = \vec{y}$, which is what we claimed.
\end{proof}

For all tree automata, the automaton distribution map $\Psi$ has at least one fixed point.
This holds because $\Psi$ is a continuous map from a finite-dimensional simplex to itself, and so
the Brouwer fixed-point theorem guarantees the existence of a fixed point.

For a given automaton $A$ and child distribution $\chi$,
suppose $\vec{\nu}$ is some fixed point of $\Psi$. We call $\iota$ an \emph{interpretation of the automaton $A$ corresponding to $\vec{\nu}$} if $\iota$ is indeed an interpretation of $A$ and $\iota(T) \sim \vec{\nu}$.
It is not hard to show that up to measure zero changes, there is at most one interpretation
corresponding to a given fixed point (see Proposition~\ref{prop:zero.one}).
If such an interpretation exists, we call $\vec{\nu}$ \emph{interpretable}; otherwise, we call it
\emph{rogue}. Our main results are a criterion for determining whether a given fixed point
is rogue or interpretable when $\abs{\Sigma}=2$ (Theorem~\ref{main 2 colours}), as well as
a sufficient condition for interpretability for $\abs{\Sigma}\geq 3$ (Theorem~\ref{thm:k.state}).
To state this criterion, we must define the two randomly
coloured trees explained in the next two sections.

\subsection{The random state tree}\label{subsec:RST}
Fix a child distribution $\chi$, automaton $A$, and a fixed point $\vec{\nu}$ of
the resulting automaton distributional map $\Psi$.
The \emph{random state tree associated with $\vec{\nu}$} is a coloured
Galton--Watson tree. We write it as $(T,\omega)$, where $\omega\colon V(T)\to \Sigma$
is a random colouring of the tree $T$. It is defined by the following properties:
\begin{enumerate}[(i)]
  \item $T\sim\GW(\chi)$;\label{RST1}
  \item for every $n$, the conditional distribution of $\big(\omega(v): v \in L_{n}\big)$ given
    $T|_{n}$ is i.i.d.~$\vec{\nu}$;\label{RST2}
  \item $\omega$ is almost surely compatible with $A$.\label{RST3}
\end{enumerate}
\begin{prop}\label{kolmogorov}
  These properties uniquely determine the distribution of $(T,\omega)$.
\end{prop}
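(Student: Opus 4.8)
The plan is to show that properties (i)--(iii) pin down the probability $\P\bigl[(T,\omega)\in[t,\tau]_n\bigr]$ for every $n$ and every coloured tree $(t,\tau)\in\Tcol$, and then to invoke the standard fact that a probability measure on $\Tcol$ is determined by its values on the cylinder sets $[t,\tau]_n$. The latter is legitimate because two colour-cylinders intersect in a colour-cylinder or in $\emptyset$, so the $[t,\tau]_n$ form a $\pi$-system, and, exactly as for uncoloured trees (see \cite[Exercise~5.2]{LP}), they generate the Borel $\sigma$-algebra on $\Tcol$.

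First I would fix $n$ and a coloured tree $(t,\tau)$, and condition on the event $\{T|_n=t|_n\}$. Since $\Tt$ consists of locally finite trees, $t|_n$ is a finite tree, and $\P[T|_n=t|_n]$ is the usual $\GW(\chi)$ product of $\chi$-probabilities over the vertices of $t|_{n-1}$; in particular it depends only on $\chi$. On this event, property~(ii) says that $\bigl(\omega(v):v\in L_n\bigr)$ is i.i.d.-$\vec\nu$, so
\[
  \P\bigl[\,\omega(v)=\tau(v)\text{ for all }v\in L_n \,\bigm|\, T|_n=t|_n\,\bigr]=\prod_{v\in L_n(t)}\nu_{\tau(v)},
\]
where $\nu_\sigma$ denotes the mass $\vec\nu$ places on $\sigma$.

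The key step is to observe that, given $\{T|_n=t|_n\}$, property~(iii) forces the colours on generations $0,\dots,n-1$ to be an explicit deterministic function of the colours on generation $n$. Indeed, compatibility at $v\in L_{n-1}(t)$ reads $\omega(v)=A(\vec m)$, with $\vec m$ the colour counts of the children of $v$, all of which lie in $L_n$; so a.s.\ $\omega(v)$ is determined by $\bigl(\omega(u):u\in L_n\bigr)$. Iterating up the finite tree $t|_n$, a.s.\ the colouring $\bigl(\omega(v):v\in V(t|_{n-1})\bigr)$ equals $\Phi\bigl((\omega(u):u\in L_n(t))\bigr)$, where $\Phi$ is the deterministic map propagating colours upward through $t|_n$ by repeated application of $A$. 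Hence, up to a null set,
\[
  \{(T,\omega)\in[t,\tau]_n\}=\{T|_n=t|_n\}\cap\bigl\{\omega(v)=\tau(v)\ \forall v\in L_n\bigr\}\cap\bigl\{\tau|_{V(t|_{n-1})}=\Phi(\tau|_{L_n(t)})\bigr\},
\]
and the last event is non-random. Combining this with the previous display,
\[
  \P\bigl[(T,\omega)\in[t,\tau]_n\bigr]=\1\bigl\{\tau|_{V(t|_{n-1})}=\Phi(\tau|_{L_n(t)})\bigr\}\cdot\P\bigl[T|_n=t|_n\bigr]\cdot\prod_{v\in L_n(t)}\nu_{\tau(v)},
\]
which depends only on $\chi$, $A$, and $\vec\nu$. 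As this holds for all $n$ and all $(t,\tau)$, the distribution of $(T,\omega)$ is uniquely determined, which is the claim.

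I expect the only real subtlety to be bookkeeping: property~(iii) holds only almost surely, so one works with a.s.-equalities of events, which is harmless since cylinder probabilities are unaffected by null modifications; and one must confirm the cylinder sets genuinely generate the Borel $\sigma$-algebra on $\Tcol$, for which the metric-space description analogous to \cite[Exercise~5.2]{LP} suffices. (The same computation also yields \emph{existence}: taking the right-hand side of the last display as a candidate family of finite-dimensional distributions, the Kolmogorov consistency relation between levels $n$ and $n+1$ reduces, after summing over the added generation, precisely to the identity $\Psi(\vec\nu)=\vec\nu$; so Kolmogorov's extension theorem produces a coloured tree satisfying (i)--(iii), and this is where the fixed-point hypothesis is used.)
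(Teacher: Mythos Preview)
Your proposal is correct and takes essentially the same approach as the paper, with the emphasis reversed: you lead with uniqueness (the cylinder probabilities $\P[(T,\omega)\in[t,\tau]_n]$ are forced by (i)--(iii), and cylinders form a generating $\pi$-system) and relegate existence to a parenthetical, whereas the paper leads with existence (explicitly building the finite-level laws $(T_n,\omega_n)$ and checking Kolmogorov consistency, which reduces to $\Psi(\vec\nu)=\vec\nu$) and leaves uniqueness implicit in the fact that the finite-dimensional marginals are explicitly specified. The computational content is identical in both versions.
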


Essentially, the random state tree is defined up to height~$n$ by generating the first $n$ levels of $T$,
colouring the leaves i.i.d.~$\vec\nu$, and then colouring the first $n-1$ levels of the
tree according to the automaton. 
The distributions of coloured trees generated by this procedure turn out to be consistent
for different values of $n$, which is a consequence of $\vec{\nu}$ being a fixed point of $\Psi$.
Kolmogorov's extension theorem then shows the existence of the distribution of the entire coloured tree.
This is shown in detail in the proof of Proposition~\ref{kolmogorov}, which we give in Section~\ref{sec:2}.

The colouring of the vertices of the random state tree is reminiscent of an interpretation,
which also yields a colouring of the tree via \eqref{interpretation}. 
But note that for a given fixed point of $\Psi$, the random state tree colouring always exists,
and it is a random colouring (on top of the randomness of the tree). On the other hand, given
a fixed point of $\Psi$, there may be no interpretations associated with it;
if there is an interpretation, the colouring it yields is deterministic given the tree.

\subsection{Definition of the pivot tree}\label{sec:pivot.def}

We now describe the pivot tree, leaving its formal definition to Section~\ref{pivot tree in subcritical}. 
Consider some vertex of $(T,\omega)$, and imagine changing its colour
and then recolouring all the vertices above it according to the rule of the automaton.
We call this recolouring operation a \emph{switching}.
If the switching changes the colour at the root, then we call the vertex \emph{pivotal for $(T,\omega)$}.
It is not hard to see that a vertex can only be pivotal if its parent is pivotal.
The subgraph of $T$ induced by the pivotal vertices is thus a subtree, which we call the
\emph{pivot tree} $\Tpiv$.
As we will see in Proposition~\ref{prop:Tpiv.GW}, the pivot tree is a multitype Galton--Watson
tree. 
%(When $\abs{\Sigma}=2$, the types are the colours given by $\omega$. When $\abs{\Sigma}\geq 3$, they are slightly more complicated.)
%As one can easily compute if a given Galton--Watson tree is subcritical or supercritical, this
%gives us a very practical test of whether a fixed point of $\Psi$ is rogue or interpretable.

We mention that the pivot tree is a bit more complicated when there are more than two states,
because a vertex can change colours in more than one way. However, to state Theorem~\ref{thm:k.state},
we need only use the \emph{pivot tree with maximal target set}, in which a vertex is pivotal if its colour
can be switched to any other colour with the result of changing the colour of the root in any way.

%\subsection{Temporary list of notation}
%\toby{This section is just temporarily here as a place to record notation that we're using.
%At some point, we can move all of these to wherever they belong.}
%\begin{itemize}
%  \item $R_T$, the root of $T$.
%  \item $A^n_t\colon \Sigma^k\to \Sigma$ where $t$ is a tree with $k$ vertices at level~$n$,
%    is the state of the root of $t$ as computed by the automaton $A$ given the states
%    of the level~$n$ vertices of $t$.
%  \item $L_n$: level~$n$ vertices of $T$, used in Section~\ref{sec:subcritical}.
%    I'm also using $\ell_n(t)$ to refer to the set of vertices at level~$n$ in $T$, so
%    perhaps we just want to write $\ell_n(T)$ instead of $L_n$?
%    Or maybe we want to write $L_n(T)$ and use $\ell_n(T)$ to denote $\abs{L_n(T)}$.
%  \item $T_v$, $\omega_v$, $t_v$, $\tau_v$: trees and colourings restricted to a given vertex~$v$
%    and its descendants.
%\end{itemize}

\subsection{The main result}\label{sec:main.results}
For all of our results, fix a child distribution $\chi$, an automaton $A$ on a finite set
of states $\Sigma$, and let $\Psi\colon D\to D$ be the automaton distributional map corresponding
to $A$ and $\chi$, defined in Section~\ref{fixed points}. 

First, as we mentioned, there is at most one interpretation for each fixed point:
\begin{prop}\label{prop:zero.one}
  If $\iota,\iota'\colon\Tt\to\Sigma$ are interpretations
  of $A$ under $\GW(\chi)$ corresponding to the same fixed point of the automaton
  distribution map, 
  then $\iota=\iota'$ a.e.-$\GW(\chi)$.
\end{prop}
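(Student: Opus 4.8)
The plan is to show that two interpretations $\iota,\iota'$ corresponding to the same fixed point $\vec\nu$ must agree almost everywhere by exploiting the fact that the colouring they induce on a Galton--Watson tree $T$ is forced, on an event of full measure, to coincide with the colouring of the random state tree $(T,\omega)$ associated with $\vec\nu$. First I would fix $T\sim\GW(\chi)$ and define the two induced colourings $\tau(v)=\iota(T(v))$ and $\tau'(v)=\iota'(T(v))$, both of which are a.s.\ compatible with $A$ by definition of interpretation. By Lemma~\ref{iota distribution}, $\iota(T)\sim\vec\nu$ and $\iota'(T)\sim\vec\nu$; more strongly, applying the same self-similarity argument at every vertex of generation $n$ shows that for each fixed $n$, the colours $(\tau(v):v\in L_n)$ are conditionally i.i.d.-$\vec\nu$ given $T|_n$, and likewise for $\tau'$. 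Thus each of $(T,\tau)$ and $(T,\tau')$ satisfies properties~\eqref{RST1}, \eqref{RST2}, \eqref{RST3} defining the random state tree, so by the uniqueness statement of Proposition~\ref{kolmogorov} each has the same distribution as $(T,\omega)$.

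Having matched distributions is not quite enough, since we need $\tau$ and $\tau'$ to agree on the \emph{same} tree, not merely to be equal in law. The key observation is that, conditionally on $T$, the colouring $\tau$ is a deterministic function of $T$ (namely $v\mapsto\iota(T(v))$), and similarly for $\tau'$; so it suffices to show that this deterministic colouring is $\GW(\chi)$-a.s.\ equal to a colouring that is intrinsically determined by the data $(\chi,A,\vec\nu)$ alone. To this end I would argue that $\tau(R_T)$ is a.s.\ determined by $T|_n$ together with the knowledge that the level-$n$ colours are i.i.d.-$\vec\nu$, in the following sense: because $\tau$ is compatible with $A$ and the level-$n$ colours are conditionally i.i.d.-$\vec\nu$ given $T|_n$, the conditional distribution of $\tau(R_T)$ given $T|_n$ equals the conditional distribution of the root colour of the random state tree truncated at level $n$ given $T|_n$ --- and this conditional law is the \emph{same function} of $T|_n$ regardless of which interpretation produced $\tau$. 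Letting $n\to\infty$, the event $\{\tau(R_T)=c\}$ is measurable with respect to $\bigcap_n \sigma(T|_n,\ (\omega(v):v\in L_n))=\sigma(T,\omega)$ up to null sets; more carefully, I would show $\P[\tau(R_T)\ne\tau'(R_T)]=0$ by a coupling in which both $(T,\tau)$ and $(T,\tau')$ are realized on the \emph{same} probability space together with $(T,\omega)$ via the shared level-$n$ colours, pushing $n$ to infinity and using that a.s.\ the root colour of $(T,\omega)$ is determined (this is exactly the content that makes the random state tree's root colour a well-defined limit). Once $\P[\iota(T)\ne\iota'(T)]=\P[\tau(R_T)\ne\tau'(R_T)]=0$ is established, applying the same conclusion to the subtree $T(v)$ rooted at each vertex $v$ --- which is again a $\GW(\chi)$ tree, and on which $\iota,\iota'$ restrict to interpretations corresponding to $\vec\nu$ --- and taking a union bound over the countably many vertices gives $\iota(T(v))=\iota'(T(v))$ for all $v$ simultaneously a.s., hence $\iota=\iota'$ a.e.-$\GW(\chi)$.

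I expect the main obstacle to be the passage from ``equal in distribution'' to ``equal almost surely on the same tree.'' The subtlety is that knowing $(T,\tau)\eqd(T,\omega)\eqd(T,\tau')$ does not by itself couple $\tau$ and $\tau'$; the work is in showing that $\tau(R_T)$ is a.s.\ a \emph{measurable function of $T$ alone}, with the same function for every interpretation corresponding to $\vec\nu$. The cleanest route is probably to prove the intermediate claim that $\iota(t)=A(\vec n)$ where $\vec n$ counts, among the children $u$ of $R_t$, how many have $\iota(t(u))$ equal to each colour --- i.e.\ to unfold the compatibility relation down $n$ levels and then observe that, on a full-measure event, the root colour stabilizes as $n\to\infty$ to a value depending only on $T|_n$ and the fixed point $\vec\nu$, independently of $\iota$. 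This stabilization is essentially a restatement that the root colour of the random state tree is a.s.\ well-defined, which we may take from the construction underlying Proposition~\ref{kolmogorov}; the remaining bookkeeping (conditional independence at level $n$, the union bound over vertices, restriction of interpretations to subtrees) is routine.
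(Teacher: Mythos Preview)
You have correctly identified the key observation: because $\tau$ is compatible with $A$ and the level-$n$ colours $(\iota(T(v)))_{v\in L_n}$ are conditionally i.i.d.-$\vec\nu$ given $T|_n$, the conditional law $\P[\iota(T)=\sigma\mid T|_n]$ equals $\P[\omega(R_T)=\sigma\mid T|_n]$, and this is the same function of $T|_n$ regardless of whether we use $\iota$ or $\iota'$. This is exactly what the paper does.

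The gap is in your endgame. Having established $\P[\iota(T)=\sigma\mid T|_n]=\P[\iota'(T)=\sigma\mid T|_n]$ for all $n$, the paper simply applies L\'evy's upward theorem: since $\iota(T)$ is $\sigma(T)$-measurable (trivially, as $\iota$ is a map on $\Tt$), the left side converges a.s.\ to $\mathbf{1}\{\iota(T)=\sigma\}$, and likewise the right side to $\mathbf{1}\{\iota'(T)=\sigma\}$; equality of the limits finishes the proof. You instead try to route through a coupling with $(T,\omega)$ and invoke that ``a.s.\ the root colour of $(T,\omega)$ is determined,'' justifying this as ``exactly the content that makes the random state tree's root colour a well-defined limit'' via Proposition~\ref{kolmogorov}. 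This is a misreading: the Kolmogorov extension constructs the joint law of $(T,\omega)$ and in particular makes $\omega(R_T)$ a well-defined \emph{random variable}, but it does not show that $\omega(R_T)$ is a.s.\ a function of $T$. Whether $\omega(R_T)$ is $\sigma(T)$-measurable is precisely the interpretability question (Proposition~\ref{prop:interpretability}), and it fails for rogue fixed points. You do not need this at all: the L\'evy step only requires that $\iota(T)$ be $T$-measurable, which is immediate. Finally, the union bound over vertices is unnecessary; once $\iota(T)=\iota'(T)$ a.s.\ you have $\iota=\iota'$ a.e.-$\GW(\chi)$, which is the statement to be proved.
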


Now, we give our main results.
Let $\vec\nu$ be a fixed point of $\Psi$.
We assume that the support of the probability distribution $\vec\nu$ is all of $\Sigma$; that is,
as a vector, all entries of $\vec\nu$ are nonzero. This is in fact no restriction,
since if $\vec\nu$ is supported on a subset of $\Sigma$, we can simply remove the extra elements
of $\Sigma$ and view $A$ as an automaton on this smaller set.
Recall that the pivot tree associated with $\vec\nu$ is a multitype Galton--Watson tree,
which will be proven in Proposition~\ref{prop:Tpiv.GW}.
We define a multitype Galton--Watson tree to be subcritical, critical, or supercritical
depending on whether its matrix of mean offspring sizes has spectral radius
smaller than, equal to, or greater than $1$ (see Section~\ref{sec:pivot.regularity}).
\begin{thm}\label{main 2 colours}
  Suppose that $\abs{\Sigma}=2$ and $\chi$ has finite logarithmic moment.
  Then $\vec{\nu}$ admits an interpretation if and only if the pivot tree associated
  with $\vec\nu$ is subcritical or critical.
\end{thm}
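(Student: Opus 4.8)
The plan is to prove the two directions separately, with the ``critical or subcritical $\Rightarrow$ interpretable'' direction being the constructive half and the ``supercritical $\Rightarrow$ rogue'' direction being the obstruction half.

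For the forward (constructive) direction, I would start from the random state tree $(T,\omega)$ associated with $\vec\nu$ and try to build an interpretation $\iota$ with $\iota(T)\sim\vec\nu$. The key observation is that $\omega(R_T)$ is determined by $T|_n$ together with the colours $(\omega(v):v\in L_n)$, and the only way level-$n$ colours can influence $\omega(R_T)$ is through pivotal vertices. So I would show: the colour $\omega(R_T)$ is a measurable function of $T$ alone, provided the pivot tree $\Tpiv$ is a.s.\ finite. Concretely, if $\Tpiv$ has finite depth $N$, then no vertex below level $N$ is pivotal, and resampling the level-$n$ colours for $n>N$ cannot change $\omega(R_T)$; hence $\omega(R_T)$ is a.s.\ equal to a function of $T|_N$. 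Since a subcritical or critical multitype Galton--Watson tree is a.s.\ finite (this is where I use the standard theory of multitype branching processes, and where the finite logarithmic moment on $\chi$ presumably enters to control the offspring distribution of $\Tpiv$ so that criticality still gives a.s.\ extinction), the depth $N$ of $\Tpiv$ is a.s.\ finite, so $\omega(R_T)=\iota(T)$ for a measurable $\iota\colon\Tt\to\Sigma$. Then I would verify that this $\iota$ is genuinely an interpretation: applying it at every vertex $v$ recovers $\omega(v)$ a.s.\ (by the same argument applied to the subtree $T(v)$, using consistency of the random state tree under re-rooting at $v$), and $\omega$ is compatible with $A$ by property~\eqref{RST3}. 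Finally $\iota(T)=\omega(R_T)\sim\vec\nu$ by property~\eqref{RST2} with $n=0$.

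For the reverse direction, suppose $\Tpiv$ is supercritical; I want to show no interpretation corresponds to $\vec\nu$. Suppose for contradiction that $\iota$ is such an interpretation. By Proposition~\ref{prop:zero.one} the interpretation is essentially unique, and the colouring $v\mapsto\iota(T(v))$ is a version of the random state tree colouring $\omega$ that is a deterministic function of $T$. The strategy is to derive a contradiction from the fact that a supercritical $\Tpiv$ survives with positive probability: on the survival event there is an infinite path of pivotal vertices, along which the root colour remains ``undecided'' no matter how deep we look. More precisely, I would argue that if $\omega$ is a deterministic function of $T$, then conditioning on $T|_n$ should pin down $\omega(R_T)$ with probability tending to $1$, but the presence of an infinite pivotal path means that the level-$n$ colours still have a nonvanishing influence on $\omega(R_T)$, contradicting measurability with respect to $T$. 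To make this quantitative I expect to use the spine decomposition of the pivot tree: size-bias $\Tpiv$ along a distinguished ray, show that along the spine the automaton is always in a ``pivotal gadget'' configuration (a local configuration where flipping the spine child's colour flips the parent's), and conclude that a positive-probability resampling of a deep level-$n$ colour propagates a colour change all the way to the root. This shows $\P[\omega(R_T)=1\mid T]$ is bounded away from $0$ and $1$ on a positive-probability event, so $\omega(R_T)$ cannot be $T$-measurable, and no interpretation exists.

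The main obstacle I anticipate is the supercritical direction, specifically controlling the propagation of a colour change along the pivotal spine when $|\Sigma|=2$: one needs that pivotality is not merely a statement about a single switching at a single vertex but genuinely chains, i.e.\ that there is a positive-probability choice of deep colour and a positive-probability environment along the spine making the entire chain of switchings fire simultaneously up to the root. This requires understanding the multitype offspring structure of $\Tpiv$ well enough to know that the ``pivotal'' types can transmit a flip, and it is presumably here that the Boolean-function analysis advertised in the abstract does the work --- one analyses, for the two-state automaton $A$, which local child configurations make a coordinate pivotal and shows these configurations have positive probability under $\vec\nu$-colouring conditioned on the pivot structure. The finite logarithmic moment hypothesis should be exactly what guarantees that criticality of $\Tpiv$ implies a.s.\ finiteness (ruling out the pathological critical-but-infinite case), so I would be careful to invoke a Kesten--Stigum / $L\log L$-type criterion for multitype branching at the appropriate point.
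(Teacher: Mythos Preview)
Your forward direction contains a genuine gap. You claim that if the pivot tree has finite depth $N$, then ``resampling the level-$n$ colours for $n>N$ cannot change $\omega(R_T)$''. This is false: pivotality only controls what happens when you flip a \emph{single} vertex, not when you resample an entire level. For instance, for the at-least-two automaton, a parent with three children all in state~$1$ has \emph{no} pivotal child (flipping any one leaves two $1$'s), yet resampling all three to state~$0$ flips the parent. So ``$\Tpiv$ dies out at level $N$'' does not imply ``$\omega(R_T)$ is a function of $T|_N$''. The paper closes exactly this gap with the Kahn--Kalai--Linial / BKKKL inequality: conditionally on $T|_n$, one bounds $\min\bigl(p(T|_n),1-p(T|_n)\bigr)$ where $p(T|_n)=\P[\omega(R_T)=\sigma\mid T|_n]$ by the total influence $I_n(T)$ divided by $\log(1/\Imax_n(T))$. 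The hypothesis $\E\ell_n(\Tpiv)\le 1$ controls the numerator and a.s.\ finiteness of $\Tpiv$ sends the denominator to infinity, forcing $p(T|_n)\to\{0,1\}$. So the Boolean-function machinery is used in the \emph{subcritical} direction, not the supercritical one as you guessed.

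You also have the role of the log-moment hypothesis reversed. A.s.\ extinction of a critical multitype Galton--Watson tree needs no moment condition on $\chi$; the paper handles this directly (Proposition~\ref{prop:2state.regularity}\ref{i:crit}). The finite log-moment is used only in the \emph{supercritical} direction, to control a random series of the form $\sum_i b^i N_i$ along the spine (via a Borel--Cantelli argument). Your supercritical sketch is broadly right in spirit---one does condition on survival of $\Tpiv$, pick an infinite pivotal ray, and flip colours along it---but the actual mechanism is an absolute-continuity argument: the paper shows that the law of $(T,\omega_*)$ after flipping along such a ray is absolutely continuous with respect to the law of $(T,\omega_*)$ conditioned on the opposite root colour, by explicitly computing and bounding the Radon--Nikodym derivatives restricted to level-$n$ $\sigma$-fields. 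This is where the Lyons--Pemantle--Peres spine technique and the log-moment hypothesis do their work; the contradiction with interpretability then falls out immediately.
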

This theorem completely classifies fixed points as interpretable or rogue when $\abs{\Sigma}=2$.
It is practical to apply (see Section~\ref{sec:examples} for some examples), since
it only takes a computation to check the criticality of a given Galton--Watson tree.

When $\abs{\Sigma}\geq 3$, we give only a sufficient condition for existence of an interpretation.
\begin{thm}\label{thm:k.state}
  If the pivot tree with maximal target set associated with $\vec\nu$ is subcritical,
  then $\vec\nu$ admits an interpretation.
\end{thm}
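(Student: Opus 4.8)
The plan is to build the interpretation $\iota$ directly from the random state tree $(T,\omega)$ by showing that, under the subcriticality hypothesis, the colour $\omega(R_T)$ is in fact a deterministic function of $T$. Once we know this, we define $\iota(t)$ for a.e.-$\GW(\chi)$ tree $t$ to be this deterministic value; compatibility with $A$ for a.e.\ tree then follows from property (iii) of the random state tree applied at every vertex $v$, using the fact that the subtree $T(v)$ together with the restricted colouring $\omega|_{V(T(v))}$ has the law of a random state tree (by the branching property and the defining properties \eqref{RST1}--\eqref{RST3}). So the entire problem reduces to the statement: \emph{if the pivot tree with maximal target set is subcritical, then $\omega(R_T)$ is $\sigma(T)$-measurable.}

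To prove that reduction, I would use the truncation picture described after Proposition~\ref{kolmogorov}. For each $n$, let $\omega_n(R_T)$ be the colour the root receives when we colour $L_n$ i.i.d.-$\vec\nu$ and propagate the automaton down to the root; by consistency this has the same law as $\omega(R_T)$, and in fact one can couple so that $\omega_n(R_T)\to\omega(R_T)$ a.s.\ in the appropriate sense (the colour stabilizes). The key observation is that $\omega_n(R_T)$ depends on the random colours at level $n$ \emph{only through the colours of the level-$n$ vertices that are pivotal}, i.e.\ lie in $\Tpiv\cap L_n$ — because changing the colour of any non-pivotal level-$n$ vertex, by definition of pivotal (and of the maximal target set, so that \emph{no} switching at that vertex moves the root), leaves $\omega_n(R_T)$ unchanged. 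Now $\Tpiv$ is a multitype Galton--Watson tree (Proposition~\ref{prop:Tpiv.GW}), and it is subcritical by hypothesis, hence a.s.\ finite. Therefore a.s.\ there is a (random, but $\sigma(T,\omega)$-measurable... in fact one must be slightly careful here) level $N$ beyond which $\Tpiv$ has died out, so $\Tpiv\cap L_n=\emptyset$ for all $n\ge N$, and for such $n$ the value $\omega_n(R_T)$ does not depend on the level-$n$ colours at all — it is a function of $T$ alone. Passing $n\to\infty$ gives $\omega(R_T)=\lim_n\omega_n(R_T)$ as a deterministic function of $T$.

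The main obstacle, and the point needing the most care, is the measurability/coupling bookkeeping in the last step: the level $N$ at which $\Tpiv$ dies out is itself random and a priori depends on $\omega$, so one cannot naively say "$\omega(R_T)$ is a function of $T$" just because "for $n$ large it doesn't see the level-$n$ colours." The clean way around this is to note that whether a given vertex $v$ is pivotal, and hence the entire structure of $\Tpiv$, is governed by the automaton dynamics on $T$ together with the colouring, but that the event $\{v\in\Tpiv\}$ is actually determined by $T(v)$ and the colours below $v$ only through finitely many levels in a way that is consistent across $n$; more robustly, one argues: for each fixed $t$ in a probability-one set of trees, the random variable $\omega_n(R_t)$ (randomness only in the i.i.d.-$\vec\nu$ colours of $L_n(t)$) is eventually a.s.\ constant in the level colours once $n$ exceeds the a.s.-finite extinction time of the pivot branching process grown from $t$, and this constant is the common a.s.\ value, which we take as $\iota(t)$. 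Since $\Tpiv$ is a.s.\ finite, $\P$-a.e.\ $t$ lies in this good set. Then $\iota(T)=\omega(R_T)$ a.s., so $\iota$ realizes $\vec\nu$, and compatibility at every vertex is inherited from property (iii) as above. A secondary point worth stating carefully is why passing to the maximal target set is exactly what makes "non-pivotal $\Rightarrow$ colour-change-invariant" true: with a smaller target set a vertex could be non-pivotal yet still, upon switching, change the root's colour in an unrecorded way, which would break the reduction — so Theorem~\ref{thm:k.state} is stated with this version of the pivot tree precisely to license this step.
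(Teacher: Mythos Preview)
Your argument has a genuine gap at the ``key observation.'' You claim that if $\Tpiv\cap L_n=\emptyset$ for the realized colouring $\omega$, then $\omega_n(R_T)$ does not depend on the level-$n$ colours at all. But pivotality is defined via \emph{single-coordinate} switches: no pivotal vertex at level~$n$ means that flipping \emph{one} colour at a time leaves the root unchanged, not that the root colour is constant over all colourings of $L_n$. A Boolean function with no pivotal coordinate at a point need not be constant. Concretely, take the at-least-two automaton and a root with three children: at the colouring $(1,1,1)$ no child is pivotal and the root is $1$; at $(0,0,0)$ no child is pivotal and the root is $0$. So for this tree the pivot tree has died out at level~$1$ in both configurations, yet the root colour differs. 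Your ``clean way around'' in the last paragraph does not fix this, because the extinction time of $\Tpiv$ is itself a function of $\omega$, and even on the event of extinction the root colour is not forced.

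This is exactly the obstruction the paper's proof is built to overcome. Rather than arguing pathwise, the paper works with influences and invokes a BKKKL/KKL-type inequality (Proposition~\ref{prop:BKKKL}): if both the \emph{total} influence $I(g_{T,n})\le I_n(T)$ and the \emph{maximum} influence $\max_i I_i(g_{T,n})\le I^{\max}_n(T)$ are small, then $\min\bigl(p(T|_n),1-p(T|_n)\bigr)$ is small, where $p(T|_n)=\P[\omega(R_T)=\sigma\mid T|_n]$. Subcriticality gives $\E\,\ell_n(\Tpiv)\le 1$ (bounding the total influence in mean), and a.s.\ finiteness of $\Tpiv$ forces $I^{\max}_n(T)\to 0$ (Lemma~\ref{lem:Imax}); a short supermartingale argument then upgrades $L^1$ convergence of $\min(p,1-p)$ to a.s.\ convergence to $0$, which is criterion~\ref{i:limit.interpretable} of Proposition~\ref{prop:interpretability}. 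The KKL-type inequality is precisely what bridges ``few pivotal coordinates on average'' to ``the function is nearly constant,'' which your direct argument cannot do.
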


Our full version of this result, Proposition~\ref{prop:subcritical.implies.interpretable}, 
is actually slightly stronger and applies in some cases when the pivot tree is critical
(see Remark~\ref{rmk:critical.case}).

\subsection{Connections to other work}\label{sec:connections}
This work has some concrete connections with mathematical logic.
We start by defining \emph{first-order} and \emph{monadic second-order} logic on trees.
A sentence in the first-order language for rooted trees is a finite combination of the following:
\begin{itemize}
  \item a constant symbol $R$ representing the root;
  \item a function $\pi$ where $\pi(v)$ represents the parent of vertex~$v$;
  \item a relation $=$, denoting equality of vertices;
  \item the Boolean connectives;
  \item existential and universal quantifications over vertices.
\end{itemize}
For example, a valid first order
sentence is that some vertex has exactly one child, which is expressed in the formal language
by
\begin{align*}
  \exists x\ \exists y\ \Bigl((\pi(y)=x) \land \bigl(\forall z\ (\pi(z)=x \implies z=y)\bigr)\Bigr).
\end{align*}
The monadic second-order language adds
\begin{itemize}
  \item existential and universal quantifications over sets of vertices;
  \item the relation $\in$, denoting set membership.
\end{itemize}
For example, the following sentence states that the tree is infinite:
\begin{align*}
  \exists S\ \forall x\ \Bigl((x\in S) \implies \bigl(\exists y\ (\pi(y)=x) \land (y\in S)\bigr) \Bigr).
\end{align*}
The \emph{quantifier depth} of a sentence in either language
is the maximal depth of nesting of existential and universal qualifiers.
In the example above, the quantifier depth is $3$.

Using Ehrenfeucht games, one can partition the set of rooted trees into finitely many
types by the relation that two trees have the same type if they
have the same truth value for all first-order sentences of quantifier depth at most $k$
(see \cite[Chapter~3]{Libkin}). Call this partition the \emph{rank-$k$ types}.
One can do the same replacing first-order logic with monadic second-order logic, producing
the \emph{MSO rank-$k$ types} \cite[Section~7.2]{Libkin}.
In both cases, one can deduce the type of a given tree $t$ 
from the types of the trees rooted at the children of the root of $t$.
This gives rise to tree automata on the set of rank-$k$ and MSO rank-$k$ types,
both of which have interpretations
given by mapping a tree to its type.

In \cite{PS1,PS2}, this automaton is investigated for the first-order case.
The most fundamental result of \cite{PS2} is that its automaton distribution map
is a contraction and hence has a unique fixed point. As a consequence,
since the
at-least-two property has multiple fixed points,
the property of a tree containing an infinite binary tree starting from its root cannot
be expressed in first-order logic.
We discuss this further in Section~\ref{sec:first.order}.
Our initial motivation for this paper was to make sense of the meaning of multiple fixed points.

  Our work also has some connections to the theory of \emph{recursive distributional equations} (RDEs)
  as developed by Aldous and Bandyopadhyay \cite{AB}.
  A prototypical example of an RDE is for the height of a Galton--Watson tree.
  Given a child distribution, let $N$ be the number of children of the root.
  Then the height of the tree $H$ satisfies the distributional equation
  \begin{align*}
    H &\overset{d}= 1 + \max(H_1,\ldots,H_N),
  \end{align*}
  where $H_1,H_2,\ldots$ are independent copies of $H$.
  
  For a given tree automaton, the automaton distribution map $\Psi$ defines an RDE.
  For any choice of fixed point $\vec\nu$,
  the random state tree $(T,\omega)$ is an example
  of an object introduced by Aldous and Bandyopadhyay called a 
  \emph{recursive tree process} (RTP). 
  RTPs are classified as \emph{endogenous}
  or \emph{nonendogenous}, which for $(T,\omega)$ corresponds to whether
  $\omega(R_T)$ is measurable with respect to $T$. In Proposition~\ref{prop:interpretability},
  we show that this is equivalent to interpretability of $\vec\nu$.
  Thus, Theorems~\ref{main 2 colours} and \ref{thm:k.state} can be viewed
  as criteria for the endogeny of an RTP, for RTPs in a certain class.
  This work or extensions of it might prove useful, as the endogeny
  of RTPs is an actively pursued topic (see \cite{Endog1,Endog2,Endog3,Endog4}, for example).
  
  Two very recent papers have a similar flavour as ours. In \cite{BDF}, the authors
  consider critical Galton--Watson trees conditioned to have $n$ vertices.
  Each vertex of the tree is given a label from a finite set. The label of a parent
  is a function of the labels of the children along with an independent set of randomness.
  (This is also the case with Aldous and Bandyopadhyay's definition of a recursive tree process.)
  The main result of the paper is a limit theorem for the distribution of the label of the root
  as $n\to\infty$.
  
  The paper \cite{MS} considers Galton--Watson trees labeled by elements of $[0,1]$, cut off
  at level~$2n$. Leaves are assigned independent labels sampled uniformly from $[0,1]$.
  Then, the label at a parent at an even generation is the minimum of its children's labels; at
  an odd generation, it is the maximum of its children's labels. This models a game in which
  two players take turns, one trying to make the score big and one trying to keep it small.
  The paper classifies possible limit distributions for the label at the root as $n\to\infty$.
  It also investigates endogeny, the question of whether the value at the root is determined
  by the structure of the tree.
  
\subsection{Outline}

In Section~\ref{sec:2}, we first establish basic properties
of interpretations, fixed points, the random state tree, and the pivot tree used
throughout this paper.
In Section~\ref{sec:subcritical}, we prove the first direction of Theorem~\ref{main 2 colours}, 
existence of an interpretation when the pivot tree is almost surely finite.
The main tool for this is the Kahn--Kalai--Linial
inequality from the theory of Boolean functions \cite{KKL}. The other direction of 
Theorem~\ref{main 2 colours} is proven in Section~\ref{sec:supercritical} using the spine 
decomposition technique pioneered by Lyons, Peres, and Pemantle \cite{LPP}. Finally,
in Section~\ref{sec:examples}, we apply these results to answer Question~\ref{q:spencer}.
We also give examples exhibiting a phase transition between interpretable and rogue
for a fixed point as the child distribution of the tree is varied.
In Section~\ref{sec:open}, we discuss some open questions.

\section{Foundational properties of our objects}\label{sec:2}

In this section, we fix a child distribution $\chi$, an automaton $A$ on a set of states $\Sigma$, and a fixed point $\vec{\nu}$ of the automaton distributional map $\Psi\colon D\to D$ determined by $A$ and $\chi$.
We will demonstrate some of the basic properties of
fixed points, interpretations, the random state tree, and the pivot tree.

\subsection{The random state tree}
We now give the proof of Proposition~\ref{kolmogorov}, establishing the existence
of the random state tree $(T,\omega)$ defined in Section~\ref{subsec:RST}.
We then show in Proposition~\ref{(T, omega) multi-type} that it is a multitype Galton--Watson tree.

\begin{proof}[Proof of Proposition~\ref{kolmogorov}]
To invoke the Kolmogorov extension theorem \cite[Theorem~6.16]{Kallenberg}, we must construct a sequence of random variables $(T_n,\omega_n)$ such that $T_n$ is the truncation
  to level~$n$ of a $\GW(\chi)$-distributed tree, the distribution of $(\omega_n(v))_{v\in L_n(T_n)}$
  conditional on $T_n$
  is i.i.d.-$\vec\nu$, the values of $\omega_n(v)$ for $v$ in levels $0,\ldots,n-1$ are as given
  by the automaton, and the truncation of $(T_{n+1},\omega_{n+1})$ to $n$ levels is distributed
  as $(T_n,\omega_n)$. (Formally speaking, to apply the Kolmogorov extension theorem, we view
  labeled trees as a sequence of their finite truncations, but we will ignore these details.)
  
  To construct $(T_n,\omega_n)$, we simply define $T_n$ as the truncation of a Galton--Watson tree, then
  colour the level~$n$ vertices i.i.d.-$\vec\nu$, and then colour levels~$0,\ldots,n-1$ of the tree
  according to the automaton. The crux of the proof is showing that the truncation of $(T_{n+1},\omega_{n+1})$
  to level~$n$ is distributed as $(T_n,\omega_n)$. Clearly, $T_{n+1}|_n$ is distributed as $T_n$, and
  the colouring given by $\omega_{n+1}$ on levels~$0,\ldots,n-1$ of $T_{n+1}|_n$ is as induced by
  the automaton. We need only show that conditional on $T_{n+1}|_n$, the labeling $\omega_{n+1}$
  assigns i.i.d.\ $\vec\nu$ colours to the level~$n$ vertices.
  
  To see this, recall how we define $\Psi(\vec{x})$: We let a node have children according to distribution $\chi$. Each of these children is assigned, mutually independently, a state according to distribution $\vec{x}$.
  The induced random state of the parent node, obtained via the rules of $A$, has distribution $\Psi(\vec{x})$.
  Meanwhile, each $v\in L_n(T_{n+1})$ has children according to $\chi$, these children
  receive i.i.d.-$\vec\nu$ labels from $\omega_{n+1}$, and $\omega_{n+1}(v)$ is given by applying
  the automaton to these labels. Hence, 
  the distribution of $\omega_{n+1}(v)$ conditional on $T_{n+1}|_n$ is
  $\Psi(\vec\nu)$. As $\vec\nu$ is assumed to be a fixed point, this equals $\vec\nu$.
  The values of $\omega_{n+1}(v)$ are independent for the different level~$n$ vertices $v$
  conditional on $T_{n+1}|_v$, showing that $\omega_{n+1}$ assigns i.i.d.\ $\vec\nu$ colours
  to the level~$n$ vertices.
\end{proof}

Now that we have shown the existence of the random state tree, we prove that it is Galton--Watson
with types given by $\omega$.

\begin{prop}\label{(T, omega) multi-type}
The random state tree $(T, \omega)$ is a multitype Galton--Watson tree.
\end{prop}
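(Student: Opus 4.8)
The plan is to exhibit the random state tree $(T,\omega)$ as a multitype Galton--Watson tree with type set $\Sigma$, where the type of a vertex is its $\omega$-colour, and then to identify the offspring law. First I would set up the candidate offspring distributions: for each $\sigma \in \Sigma$, let $\mu_\sigma$ be the law on $\bigcup_k \Sigma^k$ (finite sequences of colours) obtained by the following recipe. Sample $N \sim \chi$; attach to the $N$ children i.i.d.\ colours with law $\vec\nu$; condition on the event that the automaton $A$ applied to these child colours returns $\sigma$; output the (ordered) sequence of child colours. Since $\vec\nu$ is a fixed point of $\Psi$, the probability of this conditioning event is exactly $\nu_\sigma > 0$ (using our standing assumption that $\vec\nu$ has full support on $\Sigma$), so $\mu_\sigma$ is well defined. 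The claim to prove is then: the distribution of $(T,\omega)$ coincides with that of a multitype Galton--Watson tree whose root has colour $\sigma$ with probability $\nu_\sigma$ and in which a vertex of colour $\sigma$ independently produces offspring (with their colours) according to $\mu_\sigma$.

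The natural route is induction on the number of levels, piggybacking on the consistent family $(T_n,\omega_n)$ already constructed in the proof of Proposition~\ref{kolmogorov}. Concretely, I would show by induction on $n$ that the truncation $(T,\omega)|_n$ agrees in law with the first $n$ levels of the candidate multitype Galton--Watson tree. The base case $n=0$ is just the statement that $\omega(R_T) \sim \vec\nu$, which follows from property~(ii) at level~$0$ (or equivalently from Lemma~\ref{iota distribution}-type reasoning: $\omega(R_T)$ is distributed as $\Psi(\vec\nu) = \vec\nu$). For the inductive step, I would condition on $(T,\omega)|_n$ and analyse the $(n{+}1)$st level. The key structural fact, which comes directly from the construction in Proposition~\ref{kolmogorov}, is that conditionally on $(T,\omega)|_n$, each vertex $v \in L_n$ independently has children according to $\chi$ whose colours are i.i.d.\ $\vec\nu$ \emph{conditioned on the automaton at $v$ returning $\omega(v)$} --- because the unconditioned child colours are i.i.d.\ $\vec\nu$, the induced colour at $v$ is $\Psi(\vec\nu)=\vec\nu$, and conditioning on the already-revealed value $\omega(v)$ is exactly the conditioning defining $\mu_{\omega(v)}$. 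Moreover these conditioned offspring configurations are independent across distinct $v \in L_n$ because the underlying child colours were independent. This is precisely the branching/offspring rule of the candidate process, so the law of level $n{+}1$ given level $n$ matches, completing the induction; Kolmogorov extension (already invoked) then upgrades the agreement of all finite truncations to agreement of the full laws.

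The main obstacle — really the only subtle point — is making the conditioning argument rigorous: one must check that "sample i.i.d.\ $\vec\nu$ child colours and then reveal/condition on the parent colour $\omega(v)$" genuinely produces independent copies of $\mu_{\omega(v)}$ across the vertices of $L_n$, and that this is consistent with how $(T,\omega)$ was built in Proposition~\ref{kolmogorov} rather than some different coupling. I would handle this by working at the level of the explicit construction $(T_{n+1},\omega_{n+1})$: there the level-$n$ vertices are coloured according to the automaton applied to i.i.d.\ $\vec\nu$ labels on level $n{+}1$, so conditioning on $(T_{n+1},\omega_{n+1})|_n$ factorizes over $v \in L_n$ by independence of the underlying label families, and for each $v$ the conditional law of its child-colour sequence is exactly $\mu_{\omega(v)}$ by Bayes' rule together with the fixed-point identity $\P[A(\text{i.i.d.\ }\vec\nu\text{ children}) = \sigma] = \nu_\sigma$. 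Everything else is bookkeeping. I would also remark that the same argument shows the offspring law $\mu_\sigma$ can be described more symmetrically without conditioning, as the size-biased-by-$\nu_\sigma^{-1}$ restriction, but the conditional description is the cleanest for the proof.
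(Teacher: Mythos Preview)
Your proposal is correct and follows essentially the same approach as the paper: you define the conditional offspring law $\mu_\sigma$ (the paper calls it $\chicol^\sigma$) and show that, conditional on the first $n$ levels of $(T,\omega)$, each level-$n$ vertex independently produces children according to $\mu_{\omega(v)}$. The paper carries this out by a direct computation on cylinder sets $[t,\tau]_{n+1}$, factoring $\P[(T,\omega)\in[t,\tau]_{n+1}]$ as $\P[(T,\omega)\in[t,\tau]_n]\prod_{v\in L_n(t)}\chicol^{\tau(v)}\bigl(\tau(u)_{u\in C(v)}\bigr)$ via the identity $\chi(k)\prod_i\vec\nu(\sigma_i)=\vec\nu(\sigma)\chicol^\sigma(\sigma_1,\ldots,\sigma_k)$, which is exactly your Bayes-rule step packaged without the inductive scaffolding.
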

\begin{proof}
  For $\sigma_1,\ldots,\sigma_k\in\Sigma$, let 
  \begin{align*}
    \chicol(\sigma_1,\ldots,\sigma_k) = \chi(k)\vec\nu(\sigma_1)\cdots\vec\nu(\sigma_k),
  \end{align*}
  the probability that $R_T$ has exactly $k$ children and that their types in order
  are $\sigma_1,\ldots,\sigma_k$.
  Let $\chicol^{\sigma}(\sigma_1,\ldots,\sigma_k)$ denote the conditional probability that $R_T$ has
  exactly $k$ children and that their types in order are $\sigma_1,\ldots,\sigma_k$,
  given that $\omega(R_T)=\sigma$. Thus, if $\sigma$ is the type according to $A$
  for a vertex with children of types $\sigma_1,\ldots,\sigma_k$, then
  \begin{align}\label{eq:chi.condition}
    \chicol(\sigma_1,\ldots,\sigma_k) = \vec\nu(\sigma)\chicol^\sigma(\sigma_1,\ldots,\sigma_k).
  \end{align}
  
  Our goal is to prove that conditional on the first $n$ levels of $(T,\omega)$, each
  vertex~$v$ at level~$n$ independently gives birth according to the distribution given by
  $\chicol^{\omega(v)}$.
  Fix any $(t,\tau)\in\Tcol$.
  By definition of $\omega$,
  \begin{align*}
    \P\Bigl[ (T,\omega)\in[t,\tau]_{n+1} \Bigr] = \P\Bigl[T\in[t]_{n+1}\Bigr] 
                                                 \prod_{u\in L_{n+1}(t)}\vec\nu\bigl(\tau(u)\bigr),
  \end{align*}
  recalling the notation $[t]_n$ and $[t,\tau]_n$ defined in Section~\ref{notations}.
  For a vertex $v\in V(t)$, let $C(v)$ denote its children in $t$.
  Since $T$ is Galton--Watson with child distribution $\chi$,
  \begin{align*}
    \P\Bigl[ (T,\omega)\in[t,\tau]_{n+1} \Bigr]
      &=\Biggl(\P\Bigl[T\in[t]_n\Bigr]\prod_{v\in L_n(t)}\chi\bigl(\abs{C(v)}\bigr)\Biggr)\prod_{u\in L_{n+1}(t)}\vec\nu\bigl(\tau(u)\bigr)\\
      &= \P\Bigl[T\in[t]_n\Bigr]\prod_{v\in L_n(t)}\Biggl(\chi\bigl(\abs{C(v)}\bigr)\prod_{u\in C(v)}\vec\nu\bigl(\tau(u)\bigr)\Biggr)\\
      &= \P\Bigl[T\in[t]_n\Bigr]\prod_{v\in L_n(t)}\chicol\bigl(\tau(u)_{u\in C(v)}\bigr).
  \end{align*}
  By \eqref{eq:chi.condition}, this becomes
  \begin{align*}
    \P\Bigl[ (T,\omega)\in[t,\tau]_{n+1} \Bigr]
      &= \P\Bigl[T\in[t]_n\Bigr]\prod_{v\in L_n(t)}\vec\nu(\tau(v))\chicol^{\tau(v)}\bigl(\tau(u)_{u\in C(v)}\bigr)\\
      &= \P\Bigl[(T,\omega)\in[t,\tau]_n\Bigr]\prod_{v\in L_n(t)}\chicol^{\tau(v)}\bigl(\tau(u)_{u\in C(v)}\bigr),
  \end{align*}
  which is exactly what we set out to prove.  
\end{proof}

\subsection{Equivalent conditions for interpretability of fixed points} 
\label{sec:equivalent.conditions}
We start with a definition that will come up again elsewhere in the paper.
Given a rooted tree $t$ and a colouring of its level~$n$ vertices, we can repeatedly
apply the automaton $A$ to determine the state of the root. We define $A^n_t\colon\Sigma^{\ell_n(t)}\to\Sigma$
to be the result of doing so, considering it as a map from the colours at level~$n$ to a colour at the root.

Now, we show that a given fixed point can have at most one interpretation:
\begin{proof}[Proof of Proposition~\ref{prop:zero.one}]
  Viewing the statement of the proposition probabilistically, our goal
  is to show that $\iota(T)=\iota'(T)$ a.s.
Fix some $\sigma \in \Sigma$.
We first show that for any $n$,
\begin{align}\label{eq:iota=iota'}
  \P\bigl[\iota(T) = \sigma\bigmid T|_{n}\bigr] = \P\bigl[\iota'(T) = \sigma\bigmid T|_{n}\bigr]\text{ a.s.}
\end{align}
To prove this, we start by observing that $\iota(T)$ is determined by 
%\begin{align*}
  $\bigl(\iota(T(v))\bigr)_{v\in L_n}$.
%\end{align*}
Indeed, since $\iota$ is an interpretation of $A$ and thus respects the automaton,
\begin{align*}
  \iota(T) = A^n_T\Bigl(\bigl(\iota(T(v))\bigr)_{v\in L_n}\Bigr).
\end{align*}
Conditional on $T|_n$, each tree $T(v)$ for $v\in L_n$ is independent and distributed
identically to $T$.
Let $\vec\nu$ be the fixed point corresponding to $\iota$ and $\iota'$.
Since the distribution of $\iota(T)$
is $\vec\nu$, the distribution of
$(\iota(T(v)))_{v\in L_n}$ conditional on $T|_n$ is i.i.d.\ $\vec\nu$.
Therefore,
\begin{align}\label{eq:iota.cond.awk}
  \P\bigl[\iota(T) = \sigma\bigmid T|_{n}\bigr] = \P\Bigl[A^n_T\Bigl(\bigl(\omega(v)\bigr)_{v\in L_n}\Bigr)=\sigma\Bigmid T|_n\Bigr]
  \text{ a.s.,}
\end{align}
recalling that by its definition, 
the colouring $\omega$ of the random state tree $(T,\omega)$ also assigns colours
to the level~$n$ vertices by sampling independently from $\vec\nu$, conditional on $T|_n$.
The exact same reasoning shows that
\begin{align*}
  \P\bigl[\iota'(T) = \sigma\bigmid T|_{n}\bigr] = \P\Bigl[A^n_T\Bigl(\bigl(\omega(v)\bigr)_{v\in L_n}\Bigr)=\sigma\Bigmid T|_n\Bigr]
  \text{ a.s.,}
\end{align*}
which proves \eqref{eq:iota=iota'}.

Now, we take limits as $n\to\infty$ to complete the proof.
The $\sigma$-fields generated by $T|_{n}$ form a filtration that converges to the $\sigma$-field generated by $T$. Hence, by L\'{e}vy's upward theorem, 
\begin{align}\label{levy upward 1}
\P\big[\iota(T) = \sigma\bigmid T|_{n}\big] &\rightarrow \P\big[\iota(T) = \sigma\bigmid T\big] = \mathbf{1}\{\iota(T) = \sigma\} \text{ a.s.}\\\intertext{and} 
\label{levy upward 2}
\P\big[\iota'(T) = \sigma\bigmid T|_{n}\big] &\rightarrow \P\big[\iota'(T) = \sigma\bigmid T\big] = \mathbf{1}\{\iota'(T) = \sigma\} \text{ a.s.}
\end{align}
By \eqref{eq:iota=iota'}, these two limits are identical.
We conclude that $\mathbf{1}\{\iota(T) = \sigma\} = \mathbf{1}\{\iota'(T) = \sigma\}$ a.s.\ for all $\sigma\in\Sigma$.
\end{proof}
The expression $A^n_T\bigl((\omega(v))_{v\in L_n}\bigr)$ in \eqref{eq:iota.cond.awk}
is equal to $\omega(R_T)$, since the colouring $\omega$
is compatible with $A$. Thus \eqref{eq:iota.cond.awk} can be written as
\begin{align}\label{eq:iota.cond}
    \P\bigl[\iota(T) = \sigma\bigmid T|_{n}\bigr] = \P\Bigl[\omega(R_T)=\sigma\Bigmid T|_n\Bigr]
  \text{ a.s.,}
\end{align}
which will come up again in the next proposition.
Before we state it,
we mention a standard characterization of measurability 
\cite[Lemma~1.13]{Kallenberg}:
Let $X$ and $Y$ be random variables taking values in measurable spaces $\mathcal{X}$ and
$\mathcal{Y}$, respectively, with $\mathcal{X}$ assumed to be a Polish space
endowed with its Borel $\sigma$-algebra. Then
the measurability of $X$ with respect to $Y$ is equivalent
to existence of a measurable map $f\colon\mathcal{Y}\to\mathcal{X}$ such that
$X=f(Y)$ a.s.
\begin{prop}\label{prop:interpretability}
  The following statements are equivalent:
  \begin{enumerate}[(i)]
    \item $\vec\nu$ is interpretable;  \label{i:interpretable}
    \item for each $\sigma\in\Sigma$, \label{i:limit.interpretable}
      \begin{align*}
        \lim_{n\to\infty} \P\bigl[\omega(R_T)=\sigma\bigmid T|_n\bigr]\in \{0,1\}\text{ a.s.;}
      \end{align*}
    \item $\omega(R_T)$ is measurable with respect to $T$; \label{i:root.measurable}
    \item $\omega$ is measurable with respect to $T$.  \label{i:omega.measurable}
  \end{enumerate}
\end{prop}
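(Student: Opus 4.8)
The plan is to route all four equivalences through statement~(iii), the measurability of $\omega(R_T)$ with respect to $T$: I would prove (ii)$\Leftrightarrow$(iii), (iii)$\Leftrightarrow$(iv), and (i)$\Leftrightarrow$(iii). The implications (iv)$\Rightarrow$(iii) is trivial, and (ii)$\Leftrightarrow$(iii) and (i)$\Rightarrow$(iii) are soft consequences of L\'evy's upward theorem, so the real content lies in (iii)$\Rightarrow$(iv) and in connecting (iii) with the existence of an interpretation. For (ii)$\Leftrightarrow$(iii): since $\sigma(T|_n)$ increases to $\sigma(T)$, L\'evy's upward theorem gives $\P[\omega(R_T)=\sigma\mid T|_n]\to\P[\omega(R_T)=\sigma\mid T]$ a.s.\ for each $\sigma\in\Sigma$, so the limit in (ii) lies in $\{0,1\}$ for all $\sigma$ iff $\P[\omega(R_T)=\sigma\mid T]\in\{0,1\}$ a.s.\ for all $\sigma$; as these conditional probabilities sum to $1$, this holds iff exactly one of them is a.s.\ $1$, and then the colour achieving it is a measurable function of $T$ equal a.s.\ to $\omega(R_T)$, which is (iii). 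The converse direction is immediate, since (iii) makes each $\P[\omega(R_T)=\sigma\mid T]$ an indicator. For (i)$\Rightarrow$(iii): if $\iota$ is an interpretation with $\iota(T)\sim\vec\nu$, then the identity \eqref{eq:iota.cond} established alongside Proposition~\ref{prop:zero.one} reads $\P[\iota(T)=\sigma\mid T|_n]=\P[\omega(R_T)=\sigma\mid T|_n]$ a.s.; letting $n\to\infty$ and applying L\'evy's upward theorem to both sides gives $\1\{\iota(T)=\sigma\}=\P[\omega(R_T)=\sigma\mid T]$ a.s.\ for every $\sigma$, and taking $\sigma=\iota(T)$ yields $\omega(R_T)=\iota(T)$ a.s., which is $\sigma(T)$-measurable.

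The heart of the argument is (iii)$\Rightarrow$(iv). Using the measurability characterization recalled just before the proposition, write $\omega(R_T)=f(T)$ a.s.\ for some measurable $f\colon\Tt\to\Sigma$. The key point is the \emph{self-similarity} of the random state tree: for any fixed vertex address $v$, conditionally on the event $v\in V(T)$ the coloured tree $(T(v),\omega|_{T(v)})$ again satisfies the three defining properties of the random state tree associated with $\vec\nu$ — the colouring property because $L_m(T(v))\subseteq L_{m+k}(T)$ when $v$ lies at level $k$ (so a fixed i.i.d.-$\vec\nu$ conditional law passes to the coarser conditioning on $T(v)|_m$), and compatibility with $A$ because the children of a vertex of $T(v)$ in $T(v)$ coincide with its children in $T$ — so by the uniqueness part of Proposition~\ref{kolmogorov}, $(T(v),\omega|_{T(v)})\eqd(T,\omega)$. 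Since $(T,\omega(R_T))$ is supported on the graph of $f$, so is $(T(v),\omega(v))$, i.e.\ $\omega(v)=f(T(v))$ a.s.\ on $\{v\in V(T)\}$. A union bound over the countably many vertex addresses then shows that a.s.\ $\omega(v)=f(T(v))$ for every $v\in V(T)$, so $\omega$ is a measurable function of $T$, which is (iv). Finally, for (iii)$\Rightarrow$(i), set $\iota\defeq f$: by the step just completed the colouring $v\mapsto\iota(T(v))$ agrees a.s.-$\GW(\chi)$ with $\omega$, which is a.s.\ compatible with $A$, so $\iota$ is an interpretation; and $\iota(T)=\omega(R_T)\sim\vec\nu$ by the colouring property of the random state tree at generation $0$, where $T|_0$ is deterministic. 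Hence $\vec\nu$ is interpretable.

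I expect the only genuine obstacle to be the self-similarity step inside (iii)$\Rightarrow$(iv): one must verify carefully that a subtree of the random state tree is itself a random state tree in order to invoke the uniqueness of Proposition~\ref{kolmogorov}, and then propagate the almost-sure identity $\omega(v)=f(T(v))$ simultaneously over all vertices. Everything else is routine bookkeeping around L\'evy's upward theorem and the identity \eqref{eq:iota.cond} already in hand.
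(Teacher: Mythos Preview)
Your proof is correct and follows essentially the same route as the paper. The paper arranges the implications as a cycle (i)$\Rightarrow$(ii)$\Rightarrow$(iii)$\Rightarrow$(iv)$\Rightarrow$(i), while you route everything through (iii), but the substantive steps---L\'evy's upward theorem for (ii)$\Leftrightarrow$(iii), the identity \eqref{eq:iota.cond} for linking (i) to the rest, and self-similarity of the random state tree for (iii)$\Rightarrow$(iv)---are identical; if anything, your explicit invocation of the uniqueness in Proposition~\ref{kolmogorov} to justify that $(T(v),\omega|_{T(v)})\eqd(T,\omega)$ is a bit more careful than the paper, which simply asserts this fact.
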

\begin{proof}[Proof that \ref{i:interpretable} $\implies$ \ref{i:limit.interpretable}]
Let $\iota$ be the interpretation of automaton $A$ corresponding to $\vec{\nu}$ (it is unique up to $\GW(\chi)$-negligible sets by Proposition~\ref{prop:zero.one}). 
By \eqref{eq:iota.cond}, 
\begin{align*}
   \lim_{n\to\infty}\P\bigl[\omega(R_{T}) = \sigma\bigmid T|_{n}\bigr]
    &= \lim_{n\to\infty}\P\bigl[\iota(T) = \sigma\bigmid T|_{n}\bigr]
    = \mathbf{1}\{\iota(T) = \sigma\} \in \{0, 1\} \text{ a.s.,}
\end{align*}
applying L\'evy's upward theorem as in \eqref{levy upward 1}.
\end{proof}
\begin{proof}[Proof that \ref{i:limit.interpretable} $\implies$ \ref{i:root.measurable}]
   Invoking L\'{e}vy's upward theorem and then \ref{i:limit.interpretable},
\begin{equation}\label{levy upward 3}
\P\bigl[\omega(R_T)=\sigma\bigmid T\bigr] =\lim_{n\to\infty}\P\bigl[\omega(R_T)=\sigma\bigmid T|_n\bigr] \in \{0,1\}
  \text{ a.s.}
\end{equation}
Thus, given the entire tree $T$, we can almost surely determine whether $\omega(R_{T})$ equals $\sigma$ or not. Since this is true for every $\sigma \in \Sigma$, the state $\omega(R_T)$ is almost surely
equal to a deterministic function of $T$, showing that $\omega(R_{T})$ is measurable with respect to $T$.
\end{proof}
\begin{proof}[Proof that \ref{i:root.measurable} $\implies$ \ref{i:omega.measurable}]
   Fix any $v \in L_{n}$. Let $\omega|_{T(v)}$ denote the restriction of $\omega$ on the subtree $T(v)$. 
   The conditional distribution of the coloured tree $\big(T(v), \omega|_{T(v)}\big)$ 
   given $T|_{n}$ is the same as the unconditional distribution of  $(T, \omega)$. By \ref{i:root.measurable}, we know that $\omega(v)$ is measurable with respect to $T(v)$ and is hence an almost sure
   function of $T(v)$. As $T$ has countably many vertices, we can write $\omega$ as an almost sure function of $T$.
\end{proof}
\begin{proof}[Proof that \ref{i:omega.measurable} $\implies$ \ref{i:interpretable}]
   Since $\omega$ is measurable with respect to $T$, so is $\omega(R_T)$. Therefore there
   exists a measurable map $\iota\colon \mathcal{T}\to\Sigma$ such that $\iota(T)=\omega(R_T)$ a.s. 
   We claim that this will serve as the desired interpretation:
    Since $\omega$ is almost surely compatible with $A$, the assignment $v\mapsto\iota(T(v))$ is also almost surely compatible with $A$ and is hence an interpretation. Furthermore, from the construction of $\omega$, we know that $\omega(R_{T})$ will be distributed as $\vec{\nu}$, and hence so is $\iota(T)$.
\end{proof}

We mentioned at the end of Section~\ref{subsec:RST} that the colouring of $T$ given by $\omega$ and
the colouring given by an interpretation via \eqref{interpretation} are in general different.
However, it is a consequence of Proposition~\ref{prop:interpretability} that
when an interpretation exists for a given
fixed point, the two colourings are the same:
\begin{cor}\label{prop:RST.criterion}
  The fixed point $\vec\nu$ is interpretable if and only if $\omega$ is measurable with respect
  to $T$. If this occurs, then $\omega(R_t)$ is determined by $t$ for $\GW(\chi)$-a.e.
  $t\in\Tt$, and the resulting map $\Tt\to\Sigma$ given by $t\mapsto\omega(R_t)$
  is the the unique interpretation corresponding to the fixed point, up to
  a.e.-$\GW(\chi)$ equivalence.
\end{cor}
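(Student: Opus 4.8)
The plan is to deduce this corollary from Proposition~\ref{prop:interpretability}, the standard measurability characterization \cite[Lemma~1.13]{Kallenberg} quoted just above it, and Proposition~\ref{prop:zero.one}. The first assertion --- that $\vec\nu$ is interpretable if and only if $\omega$ is measurable with respect to $T$ --- is precisely the equivalence \ref{i:interpretable} $\Leftrightarrow$ \ref{i:omega.measurable} of Proposition~\ref{prop:interpretability}, so no further work is needed there.

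For the remaining claims, suppose $\vec\nu$ is interpretable, so that $\omega$, and in particular $\omega(R_T)$, is measurable with respect to $T$. Since $\Sigma$ is finite and hence Polish, \cite[Lemma~1.13]{Kallenberg} provides a measurable map $\iota\colon\Tt\to\Sigma$ with $\iota(T)=\omega(R_T)$ almost surely; this is exactly the sense in which $\omega(R_t)$ is determined by $t$ for $\GW(\chi)$-a.e.\ $t$, and by abuse of notation we write this map as $t\mapsto\omega(R_t)$. That $\iota$ is an interpretation corresponding to $\vec\nu$ is the content of the argument already given for \ref{i:omega.measurable} $\implies$ \ref{i:interpretable}: for each fixed $v\in V(T)$, the conditional law of $(T(v),\omega|_{T(v)})$ given $T|_n$ equals the unconditional law of $(T,\omega)$, so $\iota(T(v))=\omega(v)$ a.s.; taking a union over the countably many vertices gives $\iota(T(v))=\omega(v)$ for all $v$ simultaneously, a.s.; hence the colouring $v\mapsto\iota(T(v))$ coincides a.s.\ with $\omega$ and therefore inherits compatibility with $A$, while $\iota(T)=\omega(R_T)\sim\vec\nu$ by the defining properties of the random state tree. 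Uniqueness of this interpretation up to $\GW(\chi)$-a.e.\ equivalence is then immediate from Proposition~\ref{prop:zero.one}.

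I expect the only step requiring genuine care --- and the closest thing to an obstacle --- to be the passage from ``$\iota(T(v))=\omega(v)$ a.s.\ for each fixed $v$'' to the simultaneous statement ``$\iota(T(v))=\omega(v)$ for all $v$, a.s.'', since it is this that transfers the compatibility of $\omega$ with $A$ onto the colouring induced by $\iota$; it is handled exactly as in the proof of \ref{i:root.measurable} $\implies$ \ref{i:omega.measurable}, via a countable union bound. Everything else is bookkeeping around Proposition~\ref{prop:interpretability}.
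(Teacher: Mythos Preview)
Your proposal is correct and follows essentially the same approach as the paper's proof: both derive the equivalence from Proposition~\ref{prop:interpretability}, extract the interpretation from the argument for \ref{i:omega.measurable} $\implies$ \ref{i:interpretable}, and invoke Proposition~\ref{prop:zero.one} for uniqueness. You simply spell out in more detail (including the countable-union step) what the paper leaves implicit by pointing to those earlier proofs.
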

\begin{proof}
  The equivalence of interpretability and measurability of $\omega$ with respect to $T$
  is one part of Proposition~\ref{prop:interpretability}. In the proof
  that \ref{i:omega.measurable} implies \ref{i:interpretable}, it is shown
  that $t\mapsto\omega(R_t)$ yields an interpretation corresponding to the given fixed point.
  The uniqueness of this interpretation is given by Proposition~\ref{prop:zero.one}.
\end{proof}

The equivalences proven in this section reduce the question of whether a fixed point of $\Psi$
is rogue or interpretable to whether the colouring $\omega$ in the random state tree
$(T,\omega)$ is random or deterministic given $T$. This question is on its face no easier than the
original one. To answer it, the key will be the pivot tree,
a random subtree of $(T,\omega)$ that we discuss now.

\subsection{The pivot tree}\label{pivot tree in subcritical}

We start with some notation.
Suppose we are given a coloured tree $(t,\tau)$ with $\tau$ compatible with $A$. 
Suppose $v\in L_n(t)$. Now, imagine that we change the colour of $v$ to some $\gamma \in \Sigma \setminus \{\tau(v)\}$, and then recolour the vertices at levels~$0,\ldots,n-1$ based on this. We say that we
have \emph{switched the colour at $v$ to $\gamma$}, and we denote the new colouring by
$\tau^{v\to\gamma}$. Note that $\tau^{v\to\gamma}$ is only defined on $t|_n$, and that it is consistent
with the automaton at levels~$0,\ldots,n-1$.

Now, we give the full definition of the pivot tree.
When $\abs{\Sigma}=2$, this definition is simple: the pivot tree of $(t,\tau)$
consists of the subgraph induced by all vertices $v$ such that switching $\tau$ at $v$ 
changes the value of the root.
We denote the pivot tree of $(T,\omega)$ by $\Tpiv$, which 
we will prove shortly is indeed a tree.
See Figure~\ref{fig:pivotal.two.state} for an example.

When $\abs{\Sigma}\geq 3$, we sometimes demand that the colour of the root change to one of
a specific set of colours, known as the \emph{target set},
complicating the definition. Given $(t,\tau)$ with $\tau$ compatible with $A$, let
$\Aa\subseteq\Sigma\setminus\{\tau(R_t)\}$ represent this target set.
Given $t$, $\tau$, and $\Aa$, for any $v\in V(t)$ we define
\begin{align*}
  B_v=\bigl\{\gamma\in\Sigma\colon \tau^{v\to\gamma}(R_t)\in\Aa \bigr\}.
\end{align*}
In other words, $B_{v}$ is the set of colours such that switching $v$ to an element of $B_v$
changes the colour of the root to an element of $\Aa$. For any $v\in V(t)$, we say that
$v$ is \emph{pivotal for $(t,\tau)$ with target set $\Aa$} if $B_v\neq\emptyset$.

To define the pivot tree of $(T,\omega)$, we must specify a target set for each possible state of
the root. For each $\sigma\in\Sigma$, 
let $\emptyset\neq\Aa_\sigma\subseteq \Sigma\setminus\{\sigma\}$ be a given (deterministic) set 
that we call the \emph{target set of the root at state~$\sigma$}. The most basic example is to set
$\Aa_\sigma=\Sigma\setminus\{\sigma\}$
for all $\sigma$, which corresponds to requiring the colour of the root to change without caring
what it changes to.
Let $\Aa=(\Aa_\sigma)_{\sigma\in\Sigma}$. We define the pivot tree,
$\Tpiv=\Tpiv(\Aa)$, as the subgraph of $T$ induced by all vertices
pivotal for $(T,\omega)$ with target set $\Aa_{\omega(R_T)}$.
The pivot tree is measurable with 
respect to $(T,\omega)$; that is, $\Tpiv$ is a measurable function of $(T,\omega)$.
Also, observe that this definition works in the $\abs{\Sigma}=2$ case as well.
Here, there is only one possible choice of $\Aa_\sigma$, and either $B_v=\emptyset$ or $B_v$ is a singleton set
made up of the opposite colour as $\omega(v)$.

\begin{prop}\label{prop:Tpiv.GW}
  For given target sets $(\Aa_\sigma)_{\sigma\in\Sigma}$, assign the
  type $(\omega(v); B_v)$ to each vertex $v\in V(T)$. 
  With these types, both $T$ and $\Tpiv$ are multitype Galton--Watson trees.
\end{prop}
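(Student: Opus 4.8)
The plan is to piggyback on Proposition~\ref{(T, omega) multi-type}, which already says that $(T,\omega)$ is a multitype Galton--Watson tree: conditionally on its first $n$ levels, each $v\in L_n$ independently gives birth to an ordered tuple of coloured children with law $\chicol^{\omega(v)}$. What remains is to show that the extra label $B_v$ is produced by a \emph{local} rule as one descends the tree, so that the refined type $(\omega(v);B_v)$ still propagates in a branching, Markovian fashion; once that is in place, both claims follow quickly.

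First I would prove the combinatorial identity that drives everything. Fix a coloured tree $(t,\tau)$ with $\tau$ compatible with $A$, fix the target set once and for all to be $\Aa:=\Aa_{\tau(R_t)}$, and adopt the harmless convention $\tau^{v\to\tau(v)}:=\tau$. If $w$ is a child of $v$ and $\vec{n}=(n_\sigma)_{\sigma\in\Sigma}$ counts the colours of the children of $v$ under $\tau$, then switching $w$ to $\gamma$ first recolours $v$ to $A(\vec{n}-e_{\tau(w)}+e_\gamma)$ --- no other vertex at the level of $v$ is touched, since only a child of $v$ changed colour --- and then proceeds upward exactly as if $v$ itself had been switched to that colour; here $e_\sigma$ denotes the unit vector in coordinate~$\sigma$. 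Hence
\[
  \tau^{w\to\gamma}(R_t)=\tau^{\,v\to A(\vec{n}-e_{\tau(w)}+e_\gamma)}(R_t),
\]
and, taking preimages of $\Aa$ in the definition $B_x=\{\gamma:\tau^{x\to\gamma}(R_t)\in\Aa\}$,
\[
  B_w=\bigl\{\gamma\in\Sigma:\ A(\vec{n}-e_{\tau(w)}+e_\gamma)\in B_v\bigr\},\qquad B_{R_t}=\Aa_{\tau(R_t)},
\]
the base case holding because switching the root to $\gamma$ simply makes the root $\gamma$. From this recursion I would record two consequences: (a) $B_w=\emptyset$ whenever $B_v=\emptyset$, so --- using that each $\Aa_\sigma$ is nonempty --- the pivotal vertices (those with $B_v\neq\emptyset$) form a subtree containing $R_t$; and (b) the $B$-labels of the children of $v$ are a deterministic function of $B_v$ and the ordered tuple of colours of the children of $v$. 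Iterating (b) down from the base case, the types $(\omega(v);B_v)$ of all vertices in levels $0,\dots,n$ are a measurable function of the first $n$ levels of $(T,\omega)$.

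Granting this, the assertion for $T$ is immediate. Condition on the first $n$ levels of $(T,\omega)$; this determines the types of all vertices up to level~$n$. By Proposition~\ref{(T, omega) multi-type}, the vertices $v\in L_n$ then independently give birth to ordered coloured tuples $(\omega(w))_{w\in C(v)}$ with law $\chicol^{\omega(v)}$; composing with the map of consequence~(b) converts each into the ordered tuple of \emph{types} $\bigl(\omega(w);B_w\bigr)_{w\in C(v)}$, whose conditional law depends only on the type $(\omega(v);B_v)$ and which remain conditionally independent across $v\in L_n$. This is precisely the statement that $T$, labelled by the types $(\omega(v);B_v)$, is a multitype Galton--Watson tree, with root type $\bigl(\omega(R_T);\Aa_{\omega(R_T)}\bigr)$.

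For $\Tpiv$ I would argue the same way rather than invoke a pruning lemma. The first $n$ levels of $\Tpiv$, decorated with types, are a function of the first $n$ levels of $(T,\omega)$, and by consequence~(a) the level-$n$ vertices of $\Tpiv$ are exactly the pivotal vertices in $L_n$. Conditional on the first $n$ levels of $(T,\omega)$, each such $v$ gives birth in $\Tpiv$ to the subtuple of its type-tuple consisting of children with nonempty $B$-label --- again a deterministic function of $v$'s coloured offspring tuple --- so these offspring are conditionally independent with a law depending only on $v$'s type; and since that conditional law depends on the first $n$ levels of $(T,\omega)$ only through the types of the pivotal level-$n$ vertices, it is unchanged when we condition merely on the typed first $n$ levels of $\Tpiv$. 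Hence $\Tpiv$ is multitype Galton--Watson as well. The only step I expect to require genuine care is the switching identity of the second paragraph: one must track which vertices get recoloured and check that ``switching a child'' really does coincide with the appropriate ``switch of the parent'' at every level above, including the degenerate case in which the parent's colour does not change --- which is exactly what the convention $\tau^{v\to\tau(v)}:=\tau$ handles. Everything afterward is bookkeeping on top of Proposition~\ref{(T, omega) multi-type}.
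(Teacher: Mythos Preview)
Your proposal is correct and follows essentially the same approach as the paper: both arguments piggyback on Proposition~\ref{(T, omega) multi-type}, isolate the key local fact that $B_w$ for a child $w$ is determined by $B_v$ together with the colours of all children of $v$, and deduce that the augmented types propagate in a branching fashion. The only cosmetic differences are that you write out the switching identity explicitly (the paper argues it informally) and that you re-verify the branching property for $\Tpiv$ directly, whereas the paper dispatches it in one line by noting that deleting all vertices of a fixed set of types from a multitype Galton--Watson tree yields another one.
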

\begin{proof}
  We start with proof for $T$.
  Let $\Fff_n$ denote the $\sigma$-algebra generated by $T|_n$ and by the types
  $(\omega(v); B_v)$ for vertices $v$ up to level~$n$. We will refer to these
  as augmented types, in contrast with the unaugmented types given by $\omega$ alone.
  
  We must show that conditional on $\Fff_n$, the vertices
  at level~$n$ independently give birth according to their augmented types.
  First, we observe that the values of $B_v$ for $v$ in $T|_n$
  are determined by the first $n$ levels of $(T,\omega)$. Hence, 
  conditioning on $\Fff_n$ is the same as conditioning
  on the first $n$ levels of $(T,\omega)$. Thus, by Proposition~\ref{(T, omega) multi-type},
  conditional on $\Fff_n$, each vertex $v$ at level~$n$ independently gives birth
  to children whose number and unaugmented type are determined by the unaugmented type of $v$.
  
  Now, we just need to extend this statement to the augmented types.
  The key fact is the following: Let $u_1,\ldots,u_k$ be the children of some node $v$.
  Then for each $i=1,\ldots,k$, the set $B_{u_i}$ is determined by $\omega(u_1),\ldots,\omega(u_k)$
  and $B_v$.
  Indeed, from $\omega(u_1),\ldots,\omega(u_k)$, we can determine the effect on the colour
  of $v$ of changing $u_i$ to have any given colour. From $B_v$, we know whether the change
  will alter the colour of the root to have a value in $\Aa_{\omega(R_T)}$. Thus we can
  determine $B_{u_i}$.
  
  Let $C(v)$ denote the children of a vertex $v$, as in Proposition~\ref{(T, omega) multi-type}.
  From the fact above, conditional on $\Fff_n$, the distribution of $\bigl(B_u:u\in C(v)\bigr)$
  for any $v\in L_n$ is determined by $(\omega(v); B_v)$. This completes the proof
  that $T$ is multitype Galton--Watson with the augmented types.
  
  To prove the statement for $\Tpiv$, we first observe that $\Tpiv$ is indeed a tree,
  since if a vertex $u$ has $B_u\neq\emptyset$, then its parent $v$ evidently satisfies
  $B_v\neq\emptyset$. Thus, $\Tpiv$ is the tree formed by ignoring vertices of certain types
  in the Galton--Watson tree $T$, which always creates another Galton--Watson tree.
\end{proof}

\begin{figure}
    \begin{tikzpicture}[vert/.style={circle,draw=black,inner sep=2pt},
                        bvert/.style={circle,draw=black,inner sep=2pt,line width=.1cm}, 
                        yscale=1.15,xscale=1.6]
    \path (0,0) node[bvert,label=right:{\small $\{0,1\}$}] (R) {$2$};
    \path (R) + (-1.5,-1) node[vert,label=right:{\small $\emptyset$}] (R1) {$0$}    
                              + (0,-1) node[bvert,label=right:{\small $\{0\}$}] (R2) {$1$};
    \path (R)+ (1.5,-1) node[bvert,label=right:{\small $\{0\}$}] (R3) {$1$};
    \path (R2) + (0,-1) node[bvert,label=right:{\small $\{0\}$}] (R21) {$1$}
          (R3) + (-.5,-1) node[vert,label=right:{\small $\emptyset$}] (R31) {$0$}
          (R3) + (.5,-1) node[bvert,label=right:{\small $\{0\}$}] (R32) {$1$};
    \path[draw] 
        (R)--(R1) (R)--(R2) (R)--(R3)
        (R2)--(R21) (R3)--(R31) (R3)--(R32);    
    \end{tikzpicture}
  \caption{The automaton in this example is on $\{0,1,2\}$. The state of a parent is given
    by the sum of its children's states, capped at $2$. Bold vertices are pivotal with target set
    $\{0,1\}$. Written to the right of each vertex~$v$ is the set $B_v$, indicating which states
    $v$ can be switched to with the effect of changing the state of the root to a value in the target set.}
  \label{fig:pivot_tree}
\end{figure}
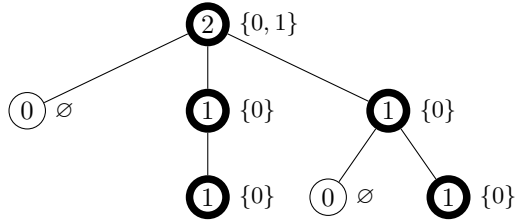

See Figure~\ref{fig:pivot_tree} for an example of a pivot tree when $\abs{\Sigma}\geq 3$.
In general, when we refer to $\Tpiv$ as a Galton--Watson tree from now on, we mean with 
types given as in Proposition~\ref{prop:Tpiv.GW}.
When $\abs{\Sigma}=2$, since either $B_v=\emptyset$ or $B_v$ is a singleton set for each $v$,
we can think of the type $(\omega(v);B_v)$
as simply $\omega(v)$ along with an indicator on $v$ being pivotal. Thus $\Tpiv$ in this
case is Galton--Watson with the types given by $\omega$ alone.
Naively, one might think that $(\Tpiv,\omega)$ would be Galton--Watson even when
$\abs{\Sigma}\geq 3$. We can see the problem with this in Figure~\ref{fig:pivot_tree}.
Let $u$ be the $0$-labeled vertex on the bottom level of the tree, and let $v$ be its parent.
Vertex~$u$ is not pivotal for the given target
set of the root (or indeed, for any possible target set). However, for
the subtree rooted at $v$, vertex~$u$ is pivotal for the target set $\{0,2\}$.
Thus, if we do not include the sets $B_v$ in the information given by the types, the law of
the progeny of a vertex would depend not just on the type of the vertex but on its ancestors.

\subsection{Regularity properties of the pivot tree}\label{sec:pivot.regularity}

For a given multitype Galton--Watson tree, define a matrix by setting
$M_{ij}$ to the expected number of offspring
of type~$j$ for a parent of type~$i$.
We classify the process as
subcritical, critical, or supercritical depending on whether the spectral radius of $M$
is smaller than, equal to, or greater than $1$.
If $M^n$ has strictly positive entries for some
choice of $n$, then the Galton--Watson process is called \emph{positive regular}. 
This says that it is possible for any type to have a
descendant of any other type, and that no periodic behaviour occurs.
The process is called \emph{singular} if each type gives birth to exactly one child with probability one.
Multitype Galton--Watson trees are nearly always considered under the assumption that they
are positive regular and nonsingular. Under this assumption, the process dies out with probability one
in the subcritical and critical cases, and it survives with positive probability in the supercritical
case. Regardless of the starting type, the expected size of the $n$th generation
vanishes exponentially in the subcritical case; remains of constant order in the critical case;
and grows exponentially in the supercritical case.

For a Galton--Watson tree without these assumptions, the situation is messier.
To illustrate, consider a process with two types $A$ and $B$ and matrix of means 
$M=\bigl[\begin{smallmatrix} 1&a\\0&1\end{smallmatrix}\bigr]$ for $a>0$. The expected
number of vertices of each type at level~$n$ starting with a vertex of type~$A$ is
given by the first row of $M^n$, which is $(1,an)$. Thus, even though this process
is critical, the expected size of the $n$th generation grows to infinity, though
only at a polynomial rate.
On the other hand, this tree still dies out with probability one,
as we can see by viewing it as a backbone
of a critical single-type Galton--Watson tree of vertices of type~$A$, each of which gives birth to 
critical single-type trees of vertices of type~$B$, all of which die out with probability one.

In general, without the assumption of positive regularity and nonsingularity, 
it is still correct that a subcritical tree has exponentially
vanishing expected $n$th generation and hence dies out almost surely.
By \cite{Sevastyanov} (see \cite[Theorem~10.1]{Harris}), so long as there does not
exist a collection of types $\Cc$ such that the children of a vertex of type in $\Cc$
include exactly one of the types in $\Cc$ with probability one, a critical tree dies out
almost surely; and a supercritical tree survives with positive probability from some starting
state.

The Galton--Watson tree $\Tpiv$ need not be positive regular.
Nonetheless, when $\abs{\Sigma}=2$, many features of positive regularity still hold.
We give a lemma that we will use to prove this.

\begin{lemma}\label{lem:growth.rates}
  %Fix a tree automaton $A$ on states $\{0,1\}$ and a fixed point $\vec{\nu}$ of the 
  %automaton distributional map $\Psi$, and let $(T,\omega)$ be the random state tree
  %associated with $\vec\nu$.
  Suppose that $\Sigma=\{0,1\}$.
  Let $Z_0$ and $Z_1$ be the number of children of $R_T$ pivotal for $(T,\omega)$ of types
  $0$ and $1$, respectively. Then
  \begin{align*}
    \E[ Z_0 \mid \omega(R_T)=0 ] &= \E[ Z_1 \mid \omega(R_T)=1 ],\\
    \intertext{and}
    \E[ Z_0 \mid \omega(R_T)=1 ] &= 
      \frac{\vec\nu(0)^2}{\vec\nu(1)^2}  \E[ Z_1\mid\omega(R_T)=0 ].
  \end{align*}
  Hence, if $M$ is the matrix of means of $\Tpiv$, given by $M=(m_{ij})_{i,j\in\{0,1\}}$ where
  $m_{ij} = \E[Z_j\mid\omega(R_T)=i]$, then
  \begin{align}\label{eq:M}
    M &= \begin{pmatrix} m_{00} & m_{01}\\ 
         \frac{\vec\nu(0)^2}{\vec\nu(1)^2}m_{01} & m_{00}
    \end{pmatrix}.
  \end{align}

\end{lemma}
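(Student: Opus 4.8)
The plan is to compute the entries of the matrix of means of $\Tpiv$ directly from the random state tree $(T,\omega)$, exploiting the identity \eqref{eq:chi.condition} relating $\chicol$ and $\chicol^\sigma$. First I would fix notation: write $p_0=\vec\nu(0)$ and $p_1=\vec\nu(1)$, and for a vertex $v$ with children $u_1,\dots,u_k$ carrying colours $\sigma_1,\dots,\sigma_k$, note that the automaton $A$ (the at-least-two rule or any $2$-state automaton) lets us decide which child $u_i$ is pivotal: $u_i$ is pivotal for $(T,\omega)$ given that its parent is pivotal exactly when flipping $\sigma_i$ changes the colour $A$ assigns to $v$. The crucial observation is that whether child $u_i$ is \emph{pivotal} and what colour it has are both determined by the multiset of children's colours together with the event that $v$ itself is pivotal; since $R_T$ is vacuously always ``pivotal'' in the sense of being the root, $Z_0$ and $Z_1$ counting pivotal children of $R_T$ of each colour are measurable functions of the unordered list of colours of the children of $R_T$.

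Next I would write $\E[Z_j\mid\omega(R_T)=i]$ as a sum over offspring configurations. Using the definition of $\chicol^\sigma$ from Proposition~\ref{(T, omega) multi-type}, for $i$ the state assigned by $A$ to a configuration $(\sigma_1,\dots,\sigma_k)$ we have
\begin{align*}
  \E[Z_j\mid\omega(R_T)=i] = \sum_{k\ge 0}\ \sum_{\substack{(\sigma_1,\dots,\sigma_k)\\ A(\cdot)=i}} \chicol^i(\sigma_1,\dots,\sigma_k)\,\#\{\,\ell : \sigma_\ell=j,\ u_\ell\text{ pivotal}\,\},
\end{align*}
and by \eqref{eq:chi.condition} this equals $\tfrac{1}{p_i}\sum_{k}\sum_{A(\cdot)=i}\chi(k)p_{\sigma_1}\cdots p_{\sigma_k}\,\#\{\ell:\sigma_\ell=j,\ u_\ell\text{ pivotal}\}$. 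The key symmetry is the colour-swap involution $0\leftrightarrow 1$ applied to every child: since a $2$-state automaton need not be symmetric, I cannot swap within a fixed automaton, but I do not need to. Instead I argue combinatorially. For the first identity $\E[Z_0\mid\omega(R_T)=0]=\E[Z_1\mid\omega(R_T)=1]$: a child $u_\ell$ of colour $0$ is pivotal (with the parent at state $0$) iff switching it to $1$ moves the parent to state $1$; a child of colour $1$ is pivotal with the parent at state $1$ iff switching it to $0$ moves the parent to state $0$. These are the two ``boundary crossing'' events and a direct term-by-term matching of configurations — pairing the configuration $(\sigma_1,\dots,\sigma_k)$ producing parent-state $0$ with an adjacent configuration producing parent-state $1$ — shows the two expectations are sums of the same terms. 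For the second identity I track the extra factor: $\chicol^0$ puts weight proportional to $p_0^{\#0\text{'s}}p_1^{\#1\text{'s}}$ divided by $p_0$, while $\chicol^1$ divides by $p_1$; matching the relevant configurations, which differ in the colour of exactly the pivotal child being counted, produces the ratio $p_0^2/p_1^2$ when passing from $\E[Z_1\mid\omega(R_T)=0]$ to $\E[Z_0\mid\omega(R_T)=1]$ (one power of $p_0/p_1$ from reweighting by $\vec\nu$ on the flipped child, and one more from the conditioning normalisation). Finally, \eqref{eq:M} is just the assembly: $m_{00}$ and $m_{01}$ are defined as given, $m_{11}=m_{00}$ by the first identity, and $m_{10}=\tfrac{p_0^2}{p_1^2}m_{01}$ by the second.

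I expect the main obstacle to be making the term-by-term matching of offspring configurations fully rigorous without drowning in indices — specifically, setting up the right bijection between the configurations with $A(\cdot)=0$ and those with $A(\cdot)=1$ that simultaneously (a) respects the pivotality count and (b) has the correct Jacobian factor in the $\vec\nu$-weights. The clean way to organise this is to condition further on the colours of all children \emph{except one distinguished child}, reducing the automaton to a function of a single colour and that child's pivotality to a fixed event, at which point the identities become elementary statements about $\P[\text{last child }=0]=p_0$ versus $p_1$ and the fixed-point relation for $\vec\nu$. Everything else — that $\Tpiv$ is multitype Galton--Watson, that the types are $\omega(v)$ together with the pivotality indicator — is already supplied by Proposition~\ref{prop:Tpiv.GW} and the remark following it.
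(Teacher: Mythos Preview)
Your proposal is correct and is essentially the paper's proof: the paper formalises your ``term-by-term matching of configurations'' as an explicit bijection $S_k(a,b)\to S_k(1-a,1-b)$ between pairs $(\sigma,i)$ with $A(\sigma)=a$, $\sigma_i=b$ pivotal, given by flipping the $i$th coordinate, and then reads off the weight ratio $\vec\nu(0)/\vec\nu(1)$ from the product measure together with the $1/\vec\nu(a)$ from conditioning---exactly the two factors you identified. The bijection you flagged as the ``main obstacle'' is the whole argument, and your fallback of conditioning on all but one distinguished child is not needed.
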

\begin{proof}
  Given a list $\sigma=(\sigma_1,\ldots,\sigma_k)\in\{0,1\}^k$ representing
  the states of an ordered set of children, we abuse notation slightly and write
  $A(\sigma)$ to mean the value that the automaton assigns to the parent given these
  children. For example, if $\sigma=(0,0,1,0,1)$, then we write $A(\sigma)$ to denote
  $A(3,2)$, the type of the parent when there are three children of type~$0$ and two of
  type~$1$. We say that coordinate $\sigma_i$ is pivotal if switching its value changes
  $A(\sigma)$. For example, if $A$ is the at-least-two automaton of 
  Example~\ref{ex:at.least.two} and $\sigma$ is as above, then $\sigma_3$ and $\sigma_5$ 
  are pivotal.
  
  For $a,b\in\{0,1\}$, let
  \begin{align*}
    \begin{split}
    S_k(a,b) = \Bigl\{ (\sigma,i)\colon \sigma\in\{0,1\}^k,\; i\in\{&1,\ldots,k\},\;A(\sigma)=a,\;\\
                      &\sigma_i=b,\;
                      \text{and $\sigma_i$ is pivotal} \Bigr\},
    \end{split}
  \end{align*}
  representing a configuration of $k$ children making the parent have type~$a$
  and a choice of a pivotal child of type~$b$.
  There is a natural bijection between $S_k(a,b)$ and $S_k(1-a,1-b)$. The map is given
  by sending $(\sigma,i)\in S_k(a,b)$ to $(\sigma',i)\in S_k(1-a,1-b)$, where $\sigma'$
  is equal to $\sigma$ except at coordinate~$i$. Applying this bijection, keeping
  in mind that the states of the level~$1$ vertices of $(T,\omega)$ conditional on $T|_1$
  are i.i.d.\ $\vec\nu$,
  \begin{align*}
    \E\bigl[ Z_0\1\{\omega(R_T)=a\} \mid \text{$R_T$ has $k$ children} \bigr]
      &= \sum_{(\sigma,i)\in S_k(a,0)} \vec{\nu}^{\otimes n}(\sigma)\\
      &= \sum_{(\sigma,i)\in S_k(1-a,1)} \frac{\vec{\nu}(0)}{\vec{\nu}(1)}\vec{\nu}^{\otimes n}(\sigma)\\
      &= \frac{\vec{\nu}(0)}{\vec{\nu}(1)}
        \E\bigl[ Z_1\1\{\omega(R_T)=1-a\} \mid \text{$R_T$ has $k$ children} \bigr].
  \end{align*}
  Here we use the notation $\vec{\nu}^{\otimes n}$ to denote the $n$-fold product measure
  of $\vec\nu$ with itself.
  Taking expectations, in the $a=0$ case this yields
  \begin{align*}
    \frac{\E\bigl[ Z_0\1\{\omega(R_T)=0\}\bigr]}{\vec\nu(0)}
      &= \frac{\E\bigl[ Z_1\1\{\omega(R_T)=1\} \bigr]}{\vec\nu(1)},
  \end{align*}
  while in the $a=1$ case it yields
  \begin{align*}
    \frac{\E\bigl[ Z_0\1\{\omega(R_T)=1\}\bigr]}{\vec\nu(1)}
      &= \biggl(\frac{\vec\nu(0)^2}{\vec\nu(1)^2}\biggr)
        \frac{\E\bigl[ Z_1\1\{\omega(R_T)=0\} \bigr]}{\vec\nu(0)}.\qedhere
  \end{align*}
\end{proof}

%\par \mou{Toby, from the above equations, should it rather not be that $m_{0,1} = m_{1,0}$ whereas
%\begin{equation}
%m_{0,0} = \left(\frac{\vec{\nu}(0)}{\vec{\nu}(1)}\right)^{2} m_{1,1}?
%\end{equation}
%Sorry if this is a simple thing to see and yet I am asking a stupid question :-(}
%\toby{No, I think it's correct as is\ldots} \toby{Never mind, you were right! The statement of the
%lemma was correct, but there was a mistake in the last displayed equation, now fixed.}

This lets us prove that when $\abs{\Sigma}=2$, the pivot tree behaves nicely.
In particular, at criticality $\Tpiv$ dies out and has expected size one at every generation.

\begin{prop}\label{prop:2state.regularity}
  Suppose that $\Sigma=\{0,1\}$ and that both entries of $\vec\nu$ are positive.
  Let $M=(m_{ij})_{i,j\in\{0,1\}}$ be the matrix of means of $\Tpiv$.
  \begin{enumerate}[(a)]
    \item The largest eigenvalue of $M$ in absolute value is equal to $\E[ \ell_1(\Tpiv)]$.
      \label{i:eig}
    \item For all $n$, it holds that $\E[\ell_n(\Tpiv)] = \E[\ell_1(\Tpiv) ]^n$.
      \label{i:gen.size}
    \item If $\Tpiv$ is supercritical, then it is infinite with positive probability
      conditional on both $\omega(R_T)=0$ and on $\omega(R_T)=1$.
      \label{i:supercrit}
    \item If $\Tpiv$ is critical, then it is finite with probability one.
      \label{i:crit}
  \end{enumerate}
\end{prop}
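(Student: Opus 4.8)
The plan is to exploit the explicit form \eqref{eq:M} of the mean matrix $M$ coming from Lemma~\ref{lem:growth.rates}, which is the workhorse for the entire proposition. Write $d=\vec\nu(0)^2/\vec\nu(1)^2>0$, so $M=\bigl(\begin{smallmatrix} m_{00}&m_{01}\\ dm_{01}&m_{00}\end{smallmatrix}\bigr)$. Its eigenvalues are $m_{00}\pm\sqrt{d}\,m_{01}$, and since all four entries are nonnegative, the eigenvalue of largest modulus is $\rho:=m_{00}+\sqrt{d}\,m_{01}\ge 0$. For part~\ref{i:eig}, I would compute $\E[\ell_1(\Tpiv)]$ directly: conditioning on $\omega(R_T)$, which is $0$ or $1$ with probabilities $\vec\nu(0),\vec\nu(1)$, and using that the number of pivotal children of each colour has mean given by a row of $M$, one gets $\E[\ell_1(\Tpiv)]=\vec\nu(0)(m_{00}+m_{01})+\vec\nu(1)(dm_{01}+m_{00}) = m_{00}+(\vec\nu(0)+d\vec\nu(1))m_{01}$. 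Then I would check the identity $\vec\nu(0)+d\vec\nu(1)=\vec\nu(0)+\vec\nu(0)^2/\vec\nu(1)=\vec\nu(0)/\vec\nu(1)=\sqrt d$, so indeed $\E[\ell_1(\Tpiv)]=m_{00}+\sqrt d\, m_{01}=\rho$. (If $m_{01}=0$ both eigenvalues coincide and the statement is trivial.)

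For part~\ref{i:gen.size}, the key observation is that $\rho$ is an eigenvalue with a \emph{positive} eigenvector: solving $Mx=\rho x$ gives $x=(1,\sqrt d)$, which has strictly positive entries. Hence $v:=(\vec\nu(0),\vec\nu(1))^{\mathsf T}$ can be written as $v = c\,x$ for... — no: rather, I want the generation sizes. Let $w=(\vec\nu(0),\vec\nu(1))$ be the row vector of root-type probabilities. Then $\E[\ell_n(\Tpiv)] = w M^n \mathbf 1$ where $\mathbf 1=(1,1)^{\mathsf T}$. Since $x=(1,\sqrt d)^{\mathsf T}$ is a right $\rho$-eigenvector, and one checks $w$ is (proportional to) a left $\rho$-eigenvector — indeed $wM = (\,\vec\nu(0)m_{00}+\vec\nu(1)dm_{01},\ \vec\nu(0)m_{01}+\vec\nu(1)m_{00}\,)$, and using $\vec\nu(1)d=\vec\nu(0)\sqrt d$ and $\vec\nu(0)=\vec\nu(1)\sqrt d$ this equals $\rho(\vec\nu(0),\vec\nu(1))=\rho w$. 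Therefore $wM^n = \rho^n w$, giving $\E[\ell_n(\Tpiv)] = \rho^n w\mathbf 1 = \rho^n = \E[\ell_1(\Tpiv)]^n$, which is \ref{i:gen.size}.

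For part~\ref{i:supercrit}: when $\rho>1$, $m_{01}>0$ (else $\rho=m_{00}$ and one must separately note $m_{00}>1$ forces a single-type supercritical subtree of each colour; handle this degenerate case directly), so from each colour a vertex can reach a pivotal child of the opposite colour, hence $M^2$ has all entries positive — $\Tpiv$ is positive regular. Standard multitype theory (e.g.\ the Sevastyanov/Harris results cited in Section~\ref{sec:pivot.regularity}) then gives survival with positive probability from either starting type. For part~\ref{i:crit}: when $\rho=1$, I need to rule out survival. If $\Tpiv$ is positive regular (again the case $m_{01}>0$), criticality plus positive regularity plus nonsingularity — nonsingularity holds because a critical positive-regular chain with $\rho=1$ cannot have every type producing exactly one child of a single type — gives a.s.\ extinction by the cited theorems. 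The remaining case $m_{01}=0$ means pivotal vertices never change colour along edges, so $\Tpiv$ splits into two independent single-type critical Galton--Watson trees (mean $m_{00}=1$ each), each a.s.\ finite. I expect the main obstacle to be the careful bookkeeping of these degenerate cases ($m_{01}=0$, or $M$ not positive regular) and verifying that the nonsingularity hypothesis needed to invoke the Sevastyanov criterion genuinely holds at criticality; the eigenvalue computations themselves are routine once \eqref{eq:M} is in hand.
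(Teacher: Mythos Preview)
Your arguments for parts~\ref{i:eig} and~\ref{i:gen.size} are correct and essentially identical to the paper's: compute the eigenvalues from the explicit form~\eqref{eq:M}, then check that $\vec\nu$ is a left eigenvector for the top eigenvalue $\rho$, so that $\E[\ell_n(\Tpiv)]=\vec\nu M^n\mathbf{1}=\rho^n$.

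For part~\ref{i:supercrit}, your plan is close to the paper's, with one slip: the claim that $M^2$ has all positive entries fails when $m_{00}=0$ (then $M^n$ alternates between diagonal and antidiagonal, so $\Tpiv$ is periodic rather than positive regular). The paper sidesteps positive regularity and argues more directly: supercriticality forces survival from \emph{some} type; if $m_{01}>0$ then also $m_{10}>0$ by~\eqref{eq:M}, so each type can produce the other in one step, and survival from one type implies survival from both. The degenerate case $m_{01}=m_{10}=0$ is handled as you suggest.

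For part~\ref{i:crit}, you correctly flag the crux---verifying nonsingularity (equivalently, the Sevastyanov condition)---but do not supply the argument, and this is where the real content lies. The point, which cannot be read off from $M$ alone, is that singularity forces structural constraints on the \emph{automaton} that contradict $0<\vec\nu(0)<1$. In the $m_{01}=0$ case the paper argues: if a type-$0$ vertex of $\Tpiv$ a.s.\ has exactly one pivotal child (of type~$0$), then a type-$0$ vertex in $(T,\omega)$ a.s.\ has exactly one type-$0$ child; moreover it can have no type-$1$ children, since switching a nonpivotal type-$1$ child would yield a positive-probability configuration with two type-$0$ children and a type-$0$ root. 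Hence the automaton assigns $0$ iff there is exactly one child and it is type~$0$, forcing $\vec\nu(0)=\chi(1)\vec\nu(0)$ and $\vec\nu(0)\in\{0,1\}$. In the $m_{01}>0$ case the paper checks the three candidate sets $\Cc=\{0\},\{1\},\{0,1\}$: the singletons are impossible because $m_{00}=1$ together with $m_{01}>0$ gives $\rho=m_{00}+\tfrac{\vec\nu(0)}{\vec\nu(1)}m_{01}>1$, and $\Cc=\{0,1\}$ would force every vertex to a.s.\ have a child whose colour differs from all its siblings, which fails once $\chi$ puts mass on $\{2,3,\dots\}$ and children are coloured i.i.d.\ $\vec\nu$. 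Your plan would go through once these checks are inserted.
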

\begin{proof}[Proof of \ref{i:eig}]
  By \eqref{eq:M} from Lemma~\ref{lem:growth.rates}, the characteristic polynomial of $M$ is
  \begin{align*}
    (x-m_{00})^2 - \frac{\vec\nu(0)^2}{\vec\nu(1)^2}m_{01}^2,
  \end{align*}
  which has roots $m_{00}\pm \frac{\vec\nu(0)}{\vec\nu(1)}m_{01}$.
  The larger of these is  $m_{00}+ \frac{\vec\nu(0)}{\vec\nu(1)}m_{01}$.
  We then compute
  \begin{align*}
    \E[\ell_1(\Tpiv)] = \E[Z_0+Z_1] &= \vec\nu(0)(m_{00}+m_{01}) + \vec\nu(1)(m_{10}+m_{11})\\
      &= \vec\nu(0)(m_{00}+m_{01}) + \vec\nu(1)\Bigl(\tfrac{\vec\nu(0)^2}{\vec\nu(1)^2}m_{01}+m_{00}\Bigr)\\
      &= \bigl(\vec\nu(0)+\vec\nu(1)\bigr)m_{00} + \tfrac{\vec\nu(0)\bigl(\vec\nu(1) + \vec\nu(0)\bigr)}{\vec\nu(1)}m_{01}\\
      &= m_{00} + \tfrac{\vec\nu(0)}{\vec\nu(1)}m_{01}.\qedhere
  \end{align*}
\end{proof}

\begin{proof}[Proof of \ref{i:gen.size}]
  The value of $\E[ \ell_n(\Tpiv)]$ is the sum of entries
  of the vector $\vec\nu M^n$. We can confirm by hand that $\vec\nu$ is a left eigenvector
  of $M$ corresponding to the eigenvalue $\E[ \ell_1(\Tpiv)]$, from which the statement follows.
  
  There is a more conceptual explanation for this, which we briefly sketch. 
  Let $v$ be a vertex at level~$n$ of $T$,
  and consider the following question: conditional on $T|_n$ and on $v$ being pivotal,
  what is the distribution of $\omega(v)$? The answer is $\vec\nu$, just as if
  we had not conditioned on $v$ being pivotal. This is because switching the colour of $v$
  yields a bijection between colourings in which $v$ is pivotal with colour~$0$ and
  pivotal with colour~$1$, with a ratio $\vec\nu(0)/\vec\nu(1)$ of probabilities
  of each corresponding state under the product measure
  $\vec\nu^{\otimes \ell_n}$.
  Thus, pivotal vertices are coloured by $\vec\nu$, and so the expected number of pivotal
  children of a pivotal vertex is $\E[ \ell_1(\Tpiv)]$. Iterating this and applying
  linearity of expectation yields $\E[\ell_n(\Tpiv)]=\E[\ell_1(\Tpiv)]^n$.
\end{proof}
\begin{proof}[Proof of \ref{i:supercrit}]
  We consider two cases.
  First, suppose that $m_{01}=m_{10}=0$. By Lemma~\ref{lem:growth.rates},
  the matrix $M$ has the form $\bigl[\begin{smallmatrix}m_{00}&0\\0&m_{00}
  \end{smallmatrix}\bigr]$, and by our supercriticality assumption $m_{00}\geq 1$.
  Hence, $\Tpiv$ conditional on either $\omega(R_T)=0$ or $\omega(R_T)=1$ is a 
  supercritical single-type Galton--Watson tree, and it survives in both cases with
  positive probability.
  
  Now, suppose it is not true that $m_{01}=m_{10}=0$. 
  Since the multitype Galton--Watson tree $(\Tpiv,\omega)$ is supercritical, it survives with
  positive probability from some starting state.
  Hence at least one of the two probabilities $\P\bigl[\Tpiv \text{ survives}\bigmid \omega(R_{T}) = 0\bigr]$
  and $\P\bigl[\Tpiv \text{ survives}\bigmid \omega(R_{T}) = 1\bigr]$ must be positive. 
  By Lemma~\ref{lem:growth.rates},
  both $m_{01}$ and $m_{10}$ are positive. Thus, the root of $\Tpiv$ conditioned to be type~0
  has positive probability of giving birth to a pivotal vertex of type~$1$, and vice versa. 
  Therefore if either
  of $\P\bigl[\Tpiv \text{ survives}\bigmid \omega(R_{T}) = 0\bigr]$
  or $\P\bigl[\Tpiv \text{ survives}\bigmid \omega(R_{T}) = 1\bigr]$ is positive,
  then both of them are.
\end{proof}
\begin{proof}[Proof of \ref{i:crit}]
  As in the previous proof, we break the proof into two cases depending
  on whether $m_{01}=m_{10}=0$. If so, then $\Tpiv$ conditional on either $\omega(R_T)=0$ or $\omega(R_T)=1$
  is a critical single-type Galton--Watson tree, which dies out with probability one unless
  it is singular.
  To rule this out
  suppose that a vertex of type~$0$ gives birth to a single pivotal vertex of type~$0$
  with probability one.
  Then in particular, a vertex of type~$0$ always gives birth to exactly
  one child of type~$0$, since all children of a given type have the same pivotal status.
  Now, we claim that a vertex of type~$0$ cannot give birth to any vertices of type~$1$. Indeed,
  they would be nonpivotal, and hence switching one of them would yield another configuration
  with multiple children of type~$0$ but still with a type~$0$ root.
  (Note that we have assumed that $\vec\nu$ puts positive probability on both types, meaning
  that the configuration after the switching still has positive probability of occurring.)
  Hence, a vertex of type~$0$ gives birth almost surely to exactly one child, which has
  type~$0$. Thus, we have deduced the automaton: it assigns a parent type~$0$ 
  if and only if there is exactly one child, which has type~$0$. 
  Since $\vec\nu$ is a fixed point, it satisfies
  $  \vec\nu(0) = \chi(1)\vec\nu(0)$.
  But then $\vec\nu(0)\in\{0,1\}$, contradicting our assumption that $\vec\nu$
  places positive probability on both types. The same argument also shows that a vertex of type~$1$
  does not give birth to exactly one child of type~$1$ in the $m_{01}=m_{10}=0$ case.
  
  Now, consider the case that $m_{01}$ and $m_{10}$ are nonzero.
  According to \cite[Theorem~10.1]{Harris}, we must show that for the pivot tree, there does not
  exist a collection of states $\Cc$ such that the children of a vertex of type in $\Cc$
  almost surely include exactly one with type in $\Cc$.
  Suppose there exists such a set $\Cc$.
  If $\Cc=\{0\}$, then $m_{00}=1$. But as the highest eigenvalue of $M$
  is $m_{00}+ \frac{\vec\nu(0)}{\vec\nu(1)}m_{01}$ and $m_{01}$ is assumed to be nonzero,
  $\Tpiv$ is not critical.
  The same argument rules out $\Cc=\{1\}$.
  If $\Cc=\{0,1\}$, then every vertex (of whatever type)
  gives birth to exactly one pivotal vertex almost surely.
  Since all children of the same type have the same pivotality status, this implies that
  every vertex must give birth almost surely to a unique child 
  (i.e., one whose type is the opposite of all of its siblings).
  But this can happen only if $\chi$ is supported on $\{0,1\}$, since otherwise choosing the number
  of children according to $\chi$ and then colouring them i.i.d.\ $\vec\nu$, there is positive probability
  that they all are coloured the same. But this is a contradiction, since $\chi$
  is assumed to assign positive weight to $\{2,3,\ldots\}$.
\end{proof}

\section{Subcritical pivot trees}\label{sec:subcritical}
% todo: change s to \sigma
As in Section~\ref{sec:2}, throughout this section we fix
a child distribution $\chi$, an automaton $A$ on a finite set of states $\Sigma$, 
and a fixed point $\vec{\nu}$ of the automaton distributional map $\Psi\colon D\to D$ corresponding to $A$ and $\chi$. We let $(T,\omega)$ be the random state tree for $\vec\nu$. 
As usual, we let $\Tpiv$ denote the pivot tree for $(T,\omega)$, but in this section
we fix the maximal target set $\Aa_\sigma=\Sigma\setminus\{\sigma\}$ for
$\sigma\in\Sigma$. 
Throughout this section, when we refer to a vertex as
pivotal for $(T,\omega)$, we mean that it is pivotal with this target set
(see Section~\ref{pivot tree in subcritical}).
Recall from Proposition~\ref{prop:Tpiv.GW} that $\Tpiv$ is a Galton--Watson tree 
with the types defined there.
Our goal in this section is to prove the following:
\begin{prop}\label{prop:subcritical.implies.interpretable}
  Suppose that $\Tpiv$ is almost surely finite and that $\E \ell_n(\Tpiv)\leq 1$
  for all sufficiently large $n$.
  Then $\vec\nu$ is interpretable.
\end{prop}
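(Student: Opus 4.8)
The plan is to use the equivalence from Proposition~\ref{prop:interpretability}: it suffices to show that $\omega(R_T)$ is measurable with respect to $T$, or equivalently that $\lim_{n\to\infty}\P[\omega(R_T)=\sigma\mid T|_n]\in\{0,1\}$ a.s.\ for every $\sigma\in\Sigma$. By L\'evy's upward theorem this limit equals $\P[\omega(R_T)=\sigma\mid T]$, so the content is that conditioning on all of $T$ pins down $\omega(R_T)$ with no residual randomness. The key quantity to control is the \emph{conditional variance} (or, for $|\Sigma|=2$, the variance of the indicator) of $\omega(R_T)$ given $T|_n$; I want to show this tends to $0$ as $n\to\infty$. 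First I would set up, conditional on $T|_n$, the colouring of the level-$n$ vertices as i.i.d.-$\vec\nu$ inputs to the Boolean-type function $A^n_T\colon\Sigma^{\ell_n}\to\Sigma$ defined in Section~\ref{sec:equivalent.conditions}; then $\omega(R_T)=A^n_T\bigl((\omega(v))_{v\in L_n}\bigr)$.

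The main mechanism is the connection between influences and pivotality. For the product measure $\vec\nu^{\otimes\ell_n}$, the influence of coordinate $v\in L_n$ on $A^n_T$ is, up to constants depending only on $\vec\nu$, the probability (under the random colouring, given $T|_n$) that $v$ is pivotal; and $\E[\ell_n(\Tpiv)\mid T|_n]$ is exactly the sum of these pivotality probabilities over $v\in L_n$. The hypothesis $\E\ell_n(\Tpiv)\le 1$ for large $n$ therefore says the total influence of $A^n_T$ is bounded. Now I would invoke the Kahn--Kalai--Linial inequality (as the outline promises): for a balanced-ish Boolean function, some coordinate has influence at least $c(\log m)/m$ where $m$ is the number of coordinates, so a function with total influence $O(1)$ on $m\to\infty$ coordinates must have variance tending to $0$ --- more precisely, KKL (or the simpler fact that total influence bounds the spectral weight appropriately, combined with an edge-isoperimetric/Poincar\'e-type inequality) forces $\mathrm{Var}(A^n_T\text{ given }T|_n)\to 0$ provided $\ell_n\to\infty$. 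The case $\ell_n\not\to\infty$ (the tree $T$ is finite, or has a bounded generation along a subsequence) is easy and handled separately: on $\{T\text{ finite}\}$ there is a unique compatible colouring, and on $\{\liminf\ell_n<\infty\}$ one argues directly that only finitely much randomness is involved, or more simply notes $\{T \text{ is infinite}, \liminf \ell_n < \infty\}$ has probability zero for a supercritical/critical $\chi$ with the stated moment assumptions --- actually, since $\chi$ puts mass on $\{2,3,\dots\}$, on survival $\ell_n\to\infty$ a.s., so this reduces to the finite case.

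A subtlety is that $A^n_T$ need not be close to balanced --- $\vec\nu(\sigma)$ could be near $0$ or $1$ --- and KKL in the form ``max influence $\ge c(\log m)/m$'' requires the function not to be too degenerate. The clean route is instead to use the following: because $\omega$ is a \emph{martingale-consistent} colouring (the i.i.d.-$\vec\nu$ law at level $n$ is the pushforward of the i.i.d.-$\vec\nu$ law at level $n+1$ through the automaton, since $\vec\nu$ is a fixed point of $\Psi$), the sequence $\P[\omega(R_T)=\sigma\mid T|_n]$ is a bounded martingale and hence converges a.s.; I only need to rule out that the limit is ever strictly between $0$ and $1$ on a positive-probability set. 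Suppose it is; then on that event $\liminf_n \mathrm{Var}\bigl(\1\{\omega(R_T)=\sigma\}\mid T|_n\bigr)>0$, so with positive probability $A^n_T$ is nondegenerate for infinitely many $n$, and for such $n$ I can apply a quantitative influence inequality to produce a coordinate of non-vanishing influence, hence a vertex of $L_n$ pivotal with probability bounded below, and then --- propagating pivotality up the tree using Proposition~\ref{prop:Tpiv.GW} and the fact that a pivotal vertex forces its ancestors pivotal --- conclude $\E[\ell_k(\Tpiv)]$ does not vanish, contradicting $\E\ell_n(\Tpiv)\le 1$ together with the a.s.\ finiteness of $\Tpiv$ (which by Proposition~\ref{prop:2state.regularity}\ref{i:gen.size}, or its multitype analogue, forces $\E\ell_n(\Tpiv)\to 0$ in the relevant regime).

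The hard part, and where I would spend the most care, is making the influence-to-pivotality-to-generation-size argument quantitative and uniform in $n$: translating ``$\mathrm{Var}(A^n_T\mid T|_n)$ bounded below'' into ``$\E[\ell_n(\Tpiv)\mid T|_n]$ bounded below'' in a way that survives taking unconditional expectations, and dealing with the non-balanced case (possibly by a dichotomy: either the function is close enough to balanced that KKL applies, or it is so biased that the total-influence bound already forces small variance via a direct $L^1$-to-$L^2$ comparison on the discrete cube with the $\vec\nu$-biased measure). Everything else --- the reduction via Proposition~\ref{prop:interpretability}, the martingale convergence, the finite-tree case --- is routine.
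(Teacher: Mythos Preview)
Your overall plan---reduce to criterion~\ref{i:limit.interpretable} of Proposition~\ref{prop:interpretability}, view $\omega(R_T)$ as $A^n_T$ applied to i.i.d.-$\vec\nu$ inputs, and bound the variance of this Boolean function via its influences---matches the paper's approach. But the way you invoke KKL is wrong, and the argument as written has a genuine gap.

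You claim that ``a function with total influence $O(1)$ on $m\to\infty$ coordinates must have variance tending to $0$.'' This is false: a dictatorship $f(x_1,\ldots,x_m)=x_1$ has total influence $1$ for every $m$ and constant variance. The Poincar\'e inequality gives $\mathrm{Var}(f)\le c\,I(f)$, but a bound on $I(f)$ alone cannot force $\mathrm{Var}(f)\to 0$. Your fallback contradiction argument has the same problem: KKL would give you \emph{one} coordinate with influence $\gtrsim(\log m)/m$, but that does not contradict $\E\ell_n(\Tpiv)\le 1$, which bounds the \emph{sum} of all influences. And your claim that $\E\ell_n(\Tpiv)\to 0$ is false in the critical case, which the proposition explicitly covers (there $\E\ell_n(\Tpiv)=1$ for all $n$ by Proposition~\ref{prop:2state.regularity}\ref{i:gen.size}).

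The missing ingredient is control of the \emph{maximum} individual influence. The paper uses the BKKKL form
\[
\min\bigl(p,1-p\bigr)\;\le\;\frac{I(g)}{c\,\log\bigl(1/\max_i I_i(g)\bigr)},
\]
so one needs both $I(g_{T,n})\le\E[\ell_n(\Tpiv)\mid T|_n]$ bounded in expectation (your hypothesis) \emph{and} $\max_i I_i(g_{T,n})\to 0$ a.s. The latter is where the a.s.\ finiteness of $\Tpiv$ enters: if $v\in L_n$ is pivotal then $\Tpiv$ survives to height $n$, so the influence of $v$ is at most $\P[\Tpiv\text{ survives to height }n\mid T|_n]$, and this quantity is shown to be a supermartingale converging to $0$ a.s. You never isolate this step, and without it the influence inequality has no teeth.
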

This condition on $\Tpiv$ holds when it is subcritical,
and when $\abs{\Sigma}=2$ it also holds when $\Tpiv$ is critical, as discussed in
 Section~\ref{sec:pivot.regularity}.

Our proof will use the theory of Boolean functions and influences (see \cite{GS} and \cite{O'Donnell}).
We first introduce some ideas and results from this theory, starting with pivotality
in the context of Boolean functions.
For a function $g\colon\Sigma^m\to\{0,1\}$, we say that the $i$th
coordinate is pivotal for $g$ at $(s_1,\ldots,s_m)$ if the map
\begin{align*}
  s\mapsto g(s_1,\ldots,s_{i-1},s,s_{i+1},\ldots,s_m)
\end{align*}
is nonconstant. 
To relate this to our earlier definition of a pivotal vertex
in Section~\ref{pivot tree in subcritical}, 
recall the map $A^n_t\colon\Sigma^{\ell_n(t)}\to\Sigma$ defined in Section~\ref{sec:equivalent.conditions},
which gives the colour at the root of $t$ according to the automaton $A$ 
as a function of the colours at level~$n$.
For some fixed $\sigma\in\Sigma$, define $g_{t,n}\colon\Sigma^{\ell_n(t)}\to\{0,1\}$ by 
  \begin{align}\label{eq:gdef}
    (\sigma_1,\ldots,\sigma_{\ell_n(t)})\mapsto \1\big\{A^n_t(\sigma_1,\ldots,\sigma_{\ell_n(t)})=\sigma\big\}.
  \end{align}
 Then every pivotal coordinate for $g_{T,n}$ at $(\omega(v))_{v\in L_n(T)}$ is a pivotal vertex for 
 $(T,\omega)$. 
 We mention that the converse is false: not every pivotal vertex for $\omega$ is a pivotal coordinate, because changing the label of the vertex
  might change the label of the root from one element of $\Sigma\setminus\{\sigma\}$
  to another, leaving $g_{T,n}$ the same either way.

  The \emph{influence} of the $i$th coordinate of a map $g\colon\Sigma^m\to\{0,1\}$, 
  denoted by $I_i(g)$, is 
  the probability that the $i$th coordinate is pivotal for $(S_1,\ldots,S_m)$, where $S_1,\ldots,S_m$
  are independent and identically distributed as $\vec\nu$. The total influence, $I(g)$, is the sum of the influences of all
  the coordinates, or equivalently the expected number of pivotal coordinates for $g$ at $(S_1,\ldots,S_m)$.
  
  The following is a variant of the BKKKL inequality \cite[Theorem~1]{BKKKL}, which is
  itself a variant of the KKL inequality \cite{KKL}.
  \begin{prop}[Theorem~3.4 from \cite{FK}]\label{prop:BKKKL}
    There exists a universal constant $c>0$ such that the following holds.
    Let $g\colon\Sigma^n\to\{0,1\}$ be an arbitrary map, and let
    $p=\P[g(S_1,\ldots,S_n)=1]$, where $S_1,\ldots,S_n$ are independent and distributed
    as $\vec\nu$. Then
    \begin{align*}
      I(g)\geq c\min(p,1-p)\log\biggl(\frac{1}{\max_iI_i(g)}\biggr).
    \end{align*}
  \end{prop}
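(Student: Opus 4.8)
The plan is to run the classical Kahn--Kalai--Linial argument via hypercontractivity, adapted to the product measure $\vec\nu^{\otimes n}$ on the alphabet $\Sigma$. First I would fix an orthonormal basis $1=\phi_0,\phi_1,\ldots$ of $L^2(\vec\nu)$ and expand $g$ in the induced product basis of $L^2(\vec\nu^{\otimes n})$, grouping terms by \emph{level}: $g=\sum_{k\ge 0}g^{=k}$, where $g^{=k}$ is the part spanned by products depending on exactly $k$ coordinates. Then $g^{=0}=p$ and, by orthogonality, $\mathrm{Var}(g)=\sum_{k\ge 1}\|g^{=k}\|_2^2=p(1-p)$. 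Let $T_\rho$ be the noise operator that multiplies $g^{=k}$ by $\rho^k$. I would record three facts. (i) Hypercontractivity on $(\Sigma,\vec\nu)$ tensorizes: there is $\rho_0=\rho_0(\vec\nu)\in(0,1)$, independent of $n$, with $\|T_{\rho_0}h\|_2\le\|h\|_{1+\rho_0^2}$ for every $h\in L^2(\vec\nu^{\otimes n})$. (ii) For each coordinate $i$, the function $\partial_i g$ obtained by subtracting from $g$ its average over coordinate~$i$ is exactly the projection of $g$ onto basis functions depending on coordinate~$i$; it is supported on the event that coordinate~$i$ is pivotal and is bounded by $1$ there, so $\|\partial_i g\|_1\le I_i(g)$. (iii) $\sum_i\|\partial_i g\|_2^2=\sum_{k\ge 1}k\,\|g^{=k}\|_2^2$, and each $\|\partial_i g\|_2^2=\E\bigl[\mathrm{Var}(g\mid S_j,\,j\ne i)\bigr]\le\tfrac14 I_i(g)$, so $\sum_i\|\partial_i g\|_2^2\le\tfrac14 I(g)$.

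Granting this, set $\tau=\max_i I_i(g)$ and choose a cutoff level $k$. The high-level mass is controlled by the total influence: $\sum_{k'>k}\|g^{=k'}\|_2^2\le\tfrac1k\sum_{k'\ge 1}k'\|g^{=k'}\|_2^2\le I(g)/(4k)$ by (iii). For the low levels, apply (i) to $h=\partial_i g$: since $T_{\rho_0}$ shrinks level $k'$ by $\rho_0^{k'}$ and $\partial_i g$ has no level-$0$ part, $\sum_{1\le k'\le k}\|(\partial_i g)^{=k'}\|_2^2\le\rho_0^{-2k}\|T_{\rho_0}\partial_i g\|_2^2\le\rho_0^{-2k}\|\partial_i g\|_{1+\rho_0^2}^2$; interpolating the $L^{1+\rho_0^2}$ norm between $L^1$ and $L^2$ and using (ii)--(iii) gives $\|\partial_i g\|_{1+\rho_0^2}^2\le I_i(g)^{1+\epsilon}$ with $\epsilon=\tfrac{1-\rho_0^2}{1+\rho_0^2}>0$. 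Summing over $i$ and using $I_i(g)^{1+\epsilon}\le\tau^\epsilon I_i(g)$ bounds the level-$\le k$ mass of $g$ by a constant (depending only on $\vec\nu$) times $\rho_0^{-2k}\tau^\epsilon I(g)$. Combining the two estimates, $p(1-p)=\mathrm{Var}(g)\le I(g)\bigl(C_1\rho_0^{-2k}\tau^\epsilon+C_2/k\bigr)$ with $C_1,C_2$ depending only on $\vec\nu$. Taking $k=\lfloor\alpha\log(1/\tau)\rfloor$ with $\alpha=\alpha(\vec\nu)$ small enough makes $\rho_0^{-2k}\tau^\epsilon\le\tau^{\epsilon/2}$, which is negligible next to $1/\log(1/\tau)$, so $p(1-p)\le C_3 I(g)/\log(1/\tau)$; since $\max(p,1-p)\ge\tfrac12$, this rearranges to $I(g)\ge c\min(p,1-p)\log(1/\tau)$ for $c=c(\vec\nu)>0$. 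When $\tau$ is bounded away from $0$ the inequality is trivial after shrinking $c$, since $I(g)\ge\tau$, so no generality is lost in assuming $\tau$ small.

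The hard part will be fact (i): hypercontractivity for $\vec\nu^{\otimes n}$ over an alphabet that need not be binary nor uniform. For a fixed finite probability space this is classical --- the single-coordinate space $(\Sigma,\vec\nu)$ has a hypercontractive constant $\rho_0(\vec\nu)<1$ that tensorizes to all $n$ --- so I would invoke the standard extension of the Bonami--Beckner inequality to general finite product spaces rather than reprove it. This is also the reason the constant $c$ depends on $\vec\nu$ (through $\rho_0(\vec\nu)$, and through $\min_\sigma\vec\nu(\sigma)$ in the comparison between combinatorial and variance influences), although on nothing else --- in particular not on $n$ or $g$, as claimed. An alternative that sidesteps general-alphabet hypercontractivity is to lift $g$ to a Boolean function on $\{0,1\}^N$ under the uniform measure, by partitioning $[0,1]$ into blocks of $\vec\nu$-mass and discretizing each coordinate at a fine dyadic scale; this preserves $p$ and, in the limit, the influences $I_i(g)$, after which one quotes the KKL or BKKKL inequality verbatim. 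Either route reaches the stated bound; I would present the hypercontractivity argument as the main line and mention the discretization as a shortcut.
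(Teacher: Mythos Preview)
The paper does not prove this proposition at all: it is quoted verbatim as Theorem~3.4 of \cite{FK} and used as a black box in the proof of Proposition~\ref{prop:subcritical.implies.interpretable}. So there is no ``paper's own proof'' to compare against.

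Your hypercontractivity argument is the standard KKL proof and is essentially correct, with one caveat you already flag yourself: it produces a constant $c=c(\vec\nu)$ depending on the base measure (through the single-site hypercontractivity parameter $\rho_0(\vec\nu)$ and through $\min_\sigma\vec\nu(\sigma)$), whereas the proposition as stated asserts a \emph{universal} constant. Your alternative route---lifting to $[0,1]^n$ (or a fine dyadic $\{0,1\}^N$) and quoting BKKKL there---is exactly how Friedgut and Kalai obtain the universal constant, and is the cleaner way to match the statement verbatim. For the paper's application this distinction is immaterial: $\vec\nu$ is fixed throughout Section~\ref{sec:subcritical}, so a $\vec\nu$-dependent constant is perfectly adequate in \eqref{eq:BKKKL.consequence}. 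If you want to present the hypercontractivity argument as your main line, either weaken the statement to allow $c=c(\vec\nu)$ or make the discretization the primary argument.
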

  Thus, if the total and maximum influences are small, then $\min(p,1-p)$ is small, meaning thta
  that $g$ is nearly constant. Our idea is to apply this to the map $g_{T,n}$ introduced
  in \eqref{eq:gdef}, which will then show that criterion~\ref{i:limit.interpretable}
  of Proposition~\ref{prop:interpretability} is satisfied and hence $\vec\nu$ is
  interpretable.
  
  For the rest of this section, we fix an arbitrary state $\sigma\in \Sigma$ and consider $g_{t,n}$ 
  as defined in \eqref{eq:gdef}.
%  Let $P_n$ be the number of vertices at level~$n$ in $\Tpiv$, which is a function of $(T,\omega)$.
  Define
  \begin{align*}
    I_n(t)=\E\bigl[\ell_n(\Tpiv)\bigmid T|_n=t|_n\bigr].
  \end{align*}
  When we consider the random state tree $(T,\omega)$ up to level~$n$, there are two sources of
  randomness: the tree itself, which is Galton--Watson, and the colours, which are determined
  by colouring the level~$n$ vertices i.i.d.\ $\vec\nu$. We obtain $I_n(t)$ by taking an expectation
  only over this second source of randomness, with the structure of the tree fixed.  
  In other words, if the level~$n$ vertices of the deterministic tree $t$ are coloured i.i.d.\ $\vec\nu$,
  then $I_n(t)$ is the expected number of these vertices that are pivotal. Thus, $I_n(T)$ is
  the expected number of pivotal vertices for $(T,\omega)$ conditional on $T|_n$.
  Since a level~$n$ vertex of $T$ is pivotal for $\omega$ if
  the corresponding coordinate of $g_{T,n}$ is pivotal at $(\omega(v))_{v\in L_n(T)}$, 
  we have $I(g_{T,n})\leq I_n(T)$.

  For a given tree~$t$, let
  \begin{align*}
    \Imax_n(t) = \max_{v\in L_n(t)} \P\bigl[v \in \Tpiv\bigmid T|_n=t|_n\bigr].
  \end{align*}
  Observe that $I_n(t)$ has the same definition
  except that a sum replaces the maximum. Just as $I(g_{T,n})\leq I_n(T)$, we have
  $\max_i I_i(g_{T,n})\leq \Imax_n(T)$.
  \begin{lemma}\label{lem:Imax}
    If $\Tpiv$ is almost surely finite, then $\Imax_n(T)\to 0$ a.s.\ as $n\to\infty$.
  \end{lemma}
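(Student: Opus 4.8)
The plan is to reduce the claim to the almost sure finiteness of $\Tpiv$ by combining a crude union bound with a supermartingale argument.

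First I would dominate $\Imax_n$ by the conditional probability that the pivot tree survives to level~$n$. For any tree $t$ and any $v\in L_n(t)$, the event $\{v\in\Tpiv\}$ is contained in the event that $\Tpiv$ reaches level~$n$, so $\P[v\in\Tpiv\mid T|_n=t|_n]\le\P[\Tpiv\text{ reaches level }n\mid T|_n=t|_n]=:p_n(t)$; taking the maximum over $v\in L_n(t)$ yields $\Imax_n(t)\le p_n(t)$, and hence $\Imax_n(T)\le p_n(T)$ almost surely. It therefore suffices to prove that $p_n(T)\to 0$ almost surely.

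Next I would check that $\bigl(p_n(T)\bigr)_{n\ge 1}$ is a nonnegative supermartingale with respect to the filtration $\bigl(\sigma(T|_n)\bigr)_{n\ge 1}$. Write $A_n$ for the event that $\Tpiv$ reaches level~$n$. Because a pivotal vertex always has a pivotal parent, we have $A_{n+1}\subseteq A_n$, and since $\sigma(T|_n)\subseteq\sigma(T|_{n+1})$ the tower property gives $\E[p_{n+1}(T)\mid\sigma(T|_n)]=\P[A_{n+1}\mid\sigma(T|_n)]\le\P[A_n\mid\sigma(T|_n)]=p_n(T)$. A nonnegative supermartingale converges almost surely, say $p_n(T)\to p_\infty$. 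Moreover $\E[p_n(T)]=\P[A_n]$, and $\P[A_n]\to 0$ because the events $A_n$ decrease to $\{\Tpiv\text{ is infinite}\}$, which has probability zero by hypothesis (here I use that $\Tpiv$, being a subtree of the locally finite tree $T$, is infinite if and only if it reaches every level). By Fatou's lemma $\E[p_\infty]\le\liminf_n\E[p_n(T)]=0$, so $p_\infty=0$ almost surely, and therefore $\Imax_n(T)\le p_n(T)\to 0$ almost surely.

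The only genuinely delicate point is the passage from $\E[p_n(T)]\to 0$ to the almost sure statement: in the critical case $\P[\Tpiv\text{ reaches level }n]$ typically decays only polynomially, so a direct Borel--Cantelli argument on $p_n(T)$ is unavailable, and it is precisely the supermartingale property of $p_n(T)$ that supplies the almost sure convergence. Everything else is routine.
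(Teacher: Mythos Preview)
Your proof is correct and follows essentially the same route as the paper: dominate $\Imax_n(T)$ by $p_n(T)=\P[\Tpiv\text{ reaches level }n\mid T|_n]$, observe that $(p_n(T))$ is a nonnegative supermartingale (via $A_{n+1}\subseteq A_n$ and the tower property), and combine almost sure convergence with $\E[p_n(T)]\to 0$ to force the limit to be zero. The only cosmetic difference is that the paper phrases the last step as ``the almost sure limit must coincide with the $L^1$ limit,'' whereas you invoke Fatou's lemma; both are valid and equivalent here.
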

  \begin{proof}
    We will show this by proving that
    \begin{align} \label{eq:max.claim.1}
      \Imax_n(T)\leq \P\bigl[ \text{$\Tpiv$ survives to height~$n$} \bigmid T|_n \bigr]\text{ a.s.}
    \end{align}
    and
    \begin{align}\label{eq:max.claim.2}
      \P\bigl[ \text{$\Tpiv$ survives to height~$n$} \bigmid T|_n \bigr]\to 0 \text{ a.s.}
    \end{align}
    as $n\to\infty$.
    
    For the first claim, we start with the observation that for any $v\in L_n(T)$,
    \begin{align*}
      \P\bigl[ v\in\Tpiv\bigmid T|_n\bigr] \leq 
         \P\bigl[ \text{$\Tpiv$ survives to height~$n$} \bigmid T|_n \bigr]\text{ a.s.,}
    \end{align*}
    since $v\in\Tpiv$ implies that $\Tpiv$ survives to height~$n$. Since
    \begin{align*}
      \Imax_n(T) &= \max_{v\in L_n(T)} \P\bigl[ v\in\Tpiv\bigmid T|_n\bigr],
    \end{align*}
    this proves \eqref{eq:max.claim.1}.
    
    Now we turn to \eqref{eq:max.claim.2}. As $n\to\infty$,
    \begin{align*}
      \P[\text{$\Tpiv$ survives to height~$n$}]\to 0,
    \end{align*}
    since $\Tpiv$ is almost surely finite. Hence the convergence
    in \eqref{eq:max.claim.2} holds in $L^1$. To get the almost sure convergence, we show that
    \begin{align}\label{eq:smtg}
      \P\bigl[ \text{$\Tpiv$ survives to height~$n$} \bigmid T|_n \bigr]
    \end{align}
    is a supermartingale, which is more trivial than it looks
    at first glance. If $\Tpiv$ survives to height $n+1$, then it survives to height $n$. Hence,
    \begin{align*}
      \P\bigl[ \text{$\Tpiv$ survives to height~$n+1$} \bigmid T|_n,\omega|_{T|_n} \bigr]
        &\leq \P\bigl[ \text{$\Tpiv$ survives to height~$n$} \bigmid T|_n,\omega|_{T|_n} \bigr]\\
        &= \1\{\text{$\Tpiv$ survives to height~$n$}\}.
    \end{align*}
    Taking conditional expectations,
    \begin{align*}
      \P\bigl[ \text{$\Tpiv$ survives to height~$n+1$} \bigmid T|_n \bigr]
        &\leq \P\bigl[ \text{$\Tpiv$ survives to height~$n$} \bigmid T|_{n} \bigr].
    \end{align*}
    Finally,
    \begin{align*}
      &\E\Bigl[\P\bigl[ \text{$\Tpiv$ survives to height~$n+1$} \bigmid T|_{n+1} \bigr]\Bigmid T|_n\Bigr] \\
        &\qquad\qquad= \P\bigl[ \text{$\Tpiv$ survives to height~$n+1$} \bigmid T|_n \bigr].
    \end{align*}
    Altogether, this shows that
    \begin{align*}
      \E\Bigl[\P\bigl[ \text{$\Tpiv$ survives to height~$n+1$} \bigmid T|_{n+1} \bigr]\Bigmid T|_n\Bigr]
        &\leq \P\bigl[ \text{$\Tpiv$ survives to height~$n$} \bigmid T|_{n} \bigr],
    \end{align*}
    proving that \eqref{eq:smtg} is a supermartingale. 
    Thus it has an almost sure limit, which must coincide with the $L^1$ limit. 
    This proves \eqref{eq:max.claim.2}, which completes the proof.
  \end{proof}
  
  Next, we give two easy technical lemmas to be used in the proof of 
  Proposition~\ref{prop:subcritical.implies.interpretable}.
  \begin{lemma}\label{lem:tech1}
    Let $X_n$ and $Y_n$ be nonnegative random variables, and suppose that
    $\E X_n\leq 1$ for all $n$ and $Y_n\to\infty$~a.s.
    Let $Z_n=\min(X_n/Y_n, 1)$. Then $\E Z_n\to 0$.
  \end{lemma}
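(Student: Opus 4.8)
The plan is to fix $\epsilon>0$ and show $\E Z_n\le\epsilon$ for all sufficiently large $n$, by splitting the expectation according to whether $X_n$ is large or small. The point is that on the event that $X_n$ is large the truncation at $1$ keeps $Z_n$ bounded, so this part is controlled uniformly in $n$ by Markov's inequality together with $\E X_n\le 1$; while on the complementary event $X_n/Y_n$ is dominated by $M/Y_n$ for a fixed constant $M$, which tends to $0$ almost surely because $Y_n\to\infty$, so dominated convergence handles that part.

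Concretely, first I would choose the threshold $M=2/\epsilon$ \emph{before} letting $n\to\infty$. Markov's inequality gives $\P[X_n>M]\le \E X_n/M\le 1/M=\epsilon/2$, and since $0\le Z_n\le 1$,
\[
  \E\bigl[Z_n\1\{X_n>M\}\bigr]\le\P[X_n>M]\le\frac{\epsilon}{2}.
\]
On the event $\{X_n\le M\}$ we have $Z_n=\min(X_n/Y_n,1)\le\min(M/Y_n,1)$, so
\[
  \E\bigl[Z_n\1\{X_n\le M\}\bigr]\le\E\bigl[\min(M/Y_n,1)\bigr].
\]
Because $Y_n\to\infty$ almost surely, $\min(M/Y_n,1)\to 0$ almost surely, and these random variables are bounded by the integrable constant $1$; hence by the dominated convergence theorem $\E[\min(M/Y_n,1)]\to 0$, so the right-hand side is at most $\epsilon/2$ once $n$ is large enough. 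Combining the two bounds yields $\E Z_n\le\epsilon$ for all large $n$, and since $\epsilon>0$ was arbitrary, $\E Z_n\to 0$.

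There is no real obstacle here; the only thing one must be careful about is the order of operations. The threshold $M$ has to be fixed first so that the Markov estimate is uniform in $n$, and dominated convergence must be applied to the fixed-$M$ sequence $\min(M/Y_n,1)$ rather than to $Z_n$ itself — one cannot argue directly that $Z_n\to 0$ almost surely, since $X_n$ is not assumed to converge or even to be bounded along a subsequence.
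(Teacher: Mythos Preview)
Your proof is correct. The paper's argument is the mirror image of yours: instead of splitting on whether $X_n$ is large, it splits on whether $Y_n$ is large. On $\{Y_n\ge N\}$ it bounds $Z_n\le X_n/N$ and uses $\E X_n\le 1$ directly; on $\{Y_n<N\}$ it bounds $Z_n\le 1$ and uses that $\P[Y_n<N]\to 0$ since $Y_n\to\infty$ a.s. Both decompositions exploit the same two ingredients (the moment bound on $X_n$ and the almost sure divergence of $Y_n$), just assigned to opposite halves of the split. The paper's version is marginally more direct in that it avoids invoking dominated convergence explicitly, while yours makes more transparent why the truncation at $1$ is needed.
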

  \begin{proof}
    Fix some large $N$. We then compute
    \begin{align*}
      \E Z_n &= \E\bigl[ Z_n\1\{Y_n\geq N\}\bigr] + \E\bigl[ Z_n\1\{Y_n< N\}\bigr]\\
         &\leq \E\bigl[ X_n/N \bigr] + \P\bigl[  Y_n< N \bigr]\\
         &\leq 1/N + \P\bigl[  Y_n< N \bigr].
    \end{align*}
    Since $\P[  Y_n< N ]\to 0$ as $n\to\infty$, we have 
    $\limsup_{n\to\infty}\E Z_n\leq 1/N$. This holds for arbitrarily large values
    of $N$, confirming that $\E Z_n\to 0$.
  \end{proof}
  
  \begin{lemma}\label{lem:tech2}
    Suppose that $(X_n)_{n\geq 0}$ takes values in $[0,1]$ and is a martingale 
    under some filtration $(\Ff_n)_{n\geq 0}$.
    Then $\bigl(\min(X_n, 1-X_n)\bigr)_{n\geq 0}$ is a supermartingale under the same filtration.
  \end{lemma}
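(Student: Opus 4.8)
The plan is to recognize $\min(x,1-x)$ as a concave function of $x$ on $[0,1]$ and then invoke the conditional Jensen inequality, using the general principle that a concave function of a martingale is a supermartingale.

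First I would record the elementary fact that the function $\phi\colon[0,1]\to[0,1]$ given by $\phi(x)=\min(x,1-x)$ is concave, being the pointwise minimum of the two affine functions $x\mapsto x$ and $x\mapsto 1-x$. Next, since each $X_n$ takes values in $[0,1]$, the variables $\phi(X_n)$ also lie in $[0,1]$ and are in particular integrable, so the conditional expectations below are well defined. Because $(X_n)$ is an $(\Ff_n)$-martingale, $\E[X_{n+1}\mid\Ff_n]=X_n$ a.s., and applying the conditional form of Jensen's inequality to the concave function $\phi$ gives
\begin{align*}
  \E\bigl[\phi(X_{n+1})\bigmid\Ff_n\bigr]\leq\phi\bigl(\E[X_{n+1}\mid\Ff_n]\bigr)=\phi(X_n)\text{ a.s.},
\end{align*}
which is precisely the supermartingale property for $\bigl(\phi(X_n)\bigr)_{n\geq 0}=\bigl(\min(X_n,1-X_n)\bigr)_{n\geq 0}$.

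If one prefers to avoid citing Jensen for concave functions directly, an equivalent route is to write $\min(x,1-x)=\tfrac12-\bigl|x-\tfrac12\bigr|$: then $\bigl(X_n-\tfrac12\bigr)$ is a martingale, $\bigl(\bigl|X_n-\tfrac12\bigr|\bigr)$ is a submartingale because $|\cdot|$ is convex, and hence $\bigl(\tfrac12-\bigl|X_n-\tfrac12\bigr|\bigr)$ is a supermartingale. There is no substantive obstacle here; the only points requiring a moment's care are integrability and the applicability of conditional Jensen, both immediate from the boundedness of $(X_n)$ in $[0,1]$.
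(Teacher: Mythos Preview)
Your proof is correct and is essentially the same as the paper's: the paper argues directly that $\E[Y_{n+1}\mid\Ff_n]\leq X_n$ and $\E[Y_{n+1}\mid\Ff_n]\leq 1-X_n$ (from $Y_{n+1}\leq X_{n+1}$ and $Y_{n+1}\leq 1-X_{n+1}$) and then takes the minimum of these two bounds, which is exactly conditional Jensen carried out by hand for the concave function $\phi(x)=\min(x,1-x)$.
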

  \begin{proof}
    Let $Y_n=\min(X_n,1-X_n)$. Since $Y_{n+1}\leq X_{n+1}$ and $Y_{n+1}\leq 1-X_{n+1}$,
    we can take expectations to get
    \begin{align}\label{eq:X_n}
      \E[Y_{n+1}\mid \Ff_n] &\leq \E[ X_{n+1}\mid \Ff_n] = X_n,\\\intertext{and}
      \E[Y_{n+1}\mid \Ff_n] &\leq[ 1-\E X_{n+1}\mid \Ff_n] = 1-X_n.\label{eq:1-X_n}
    \end{align}
    On the event $X_n\leq 1/2$, which is measurable with respect to $\Ff_n$, equation~\eqref{eq:X_n} gives
    $\E[Y_{n+1}\mid\Ff_n]\leq Y_n$, since $Y_n=X_n$. On the complement of this event,
    \eqref{eq:1-X_n} gives $\E[Y_{n+1}\mid\Ff_n]\leq Y_n$, since $X_n=1-Y_n$.
    Hence $\E[Y_{n+1}\mid\Ff_n]\leq Y_n$ holds in both cases, proving that $(Y_n)$ is a supermartingale.    
  \end{proof}

  \begin{proof}[Proof of Proposition~\ref{prop:subcritical.implies.interpretable}]
    We will check that criterion~\ref{i:limit.interpretable} of Proposition~\ref{prop:interpretability} 
    holds. Fix a colour $\sigma$ in $\Sigma$. Let 
    \begin{align*}
      p(T|_n)&= \P\bigl[\omega(R_T)=\sigma\bigmid T|_n\bigr].
    \end{align*}
    Our goal is to show that $p(T|_n)$ converges almost surely to $0$ or $1$.
    We can assume that $\sigma$ is in the support of $\vec\nu$, since otherwise
    $p(T|_n)=0$ a.s.\ for all $n$.    
    Consider $g_{T,n}$ as defined in \eqref{eq:gdef}.
    Observe that
    \begin{align*}
      \1\{\omega(R_T)=\sigma\} &= g_{T,n}\bigl( (\omega(v))_{v\in L_n(T)} \bigr),
    \end{align*}
    and that the conditional distribution of $(\omega(v))_{v\in L_n(T)}$ given $T|_n$ is i.i.d.\ $\vec\nu$.
    We thus apply Proposition~\ref{prop:BKKKL} conditionally on $T|_n$ to obtain
    \begin{align*}
      I(g_{T,n}) \geq c\min\bigl(p(T|_n),\,1-p(T|_n)\bigr)\log\biggl(\frac{1}{\max_i I_i(g_{T,n})}\biggr).
    \end{align*}
    Rearranging this, we obtain
    \begin{align}\label{eq:BKKKL.consequence}
      \min\bigl(p(T|_n),\,1-p(T|_n)\bigr)\leq\frac{I(g_{T,n})}{c\log\frac{1}{\max_i I_i(g_{T,n})}}
        \leq \frac{I_n(T)}{c\log\frac{1}{\Imax_n(T)}}.
    \end{align}

    Now, we show that $\min\bigl(p(T|_n),\,1-p(T|_n)\bigr)$ converges to $0$
    in $L^1$ as $n\to\infty$. Let $X_n=I_n(T)$ and $Y_n=c\log\frac{1}{\Imax_n(T)}$. Then $\E X_n$ is
    the expected number of pivotal vertices for $(T,\omega)$ at level~$n$, which by assumption
    is bounded by $1$. Since $\Tpiv$ is assumed to be almost surely finite,
    Lemma~\ref{lem:Imax} shows $Y_n\to\infty$ a.s.
    By Lemma~\ref{lem:tech1}, it holds that $\E\min(X_n/Y_n,1)\to 0$.
    Since $\min\bigl(p(T|_n),\,1-p(T|_n)\bigr)\leq\min(X_n/Y_n,1)$, this proves that
    $\min\bigl(p(T|_n),\,1-p(T|_n)\bigr)$ converges to $0$ in $L^1$.
    
    Since $p(T|_n)$ is a martingale, Lemma~\ref{lem:tech2} shows that
    $\min\bigl(p(T|_n),\,1-p(T|_n)\bigr)$ is a supermartingale. Hence it has an almost
    sure limit, which must match its $L^1$ limit of $0$. This proves that $\lim p(T|_n)\in\{0,1\}$ a.s.
    By Proposition~\ref{prop:interpretability}, the fixed point $\vec\nu$ is intepretable.
  \end{proof}
  %The subcritical case did not need the full strength of the KKL inequality, 
  %since the $\log$ term in the denominator was irrelevant---the weaker Poincar\'e inequality (see
  %\cite[Theorem~1.13]{GS}) would do the job fine. In the critical case, however, we really
  %do need the KKL inequality.
  \begin{proof}[Proof of Theorem~\ref{main 2 colours} ($\Longleftarrow$)]
    Subcritical Galton--Watson trees have exponentially
    vanishing expected $n$th generation size and are almost surely finite
    (see Section~\ref{sec:pivot.regularity}). Hence the conditions of
    Proposition~\ref{prop:subcritical.implies.interpretable} hold in the
    subcritical case.
    In the critical case, $\E\ell_n(\Tpiv)=1$ and $\Tpiv$ is almost surely finite
    by parts~\ref{i:gen.size} and \ref{i:crit} of Proposition~\ref{prop:2state.regularity}.
  \end{proof}
  \begin{proof}[Proof of Theorem~\ref{thm:k.state}]
    If $\Tpiv$ is subcritical, then the conditions of Proposition~\ref{prop:subcritical.implies.interpretable}
    hold.
  \end{proof}
  
  \begin{remark}\label{rmk:critical.case}
    While we have stated Theorem~\ref{thm:k.state} for subcritical pivot trees only,
    Proposition~\ref{prop:subcritical.implies.interpretable} in fact applies to critical pivot trees,
    so long as $\E \ell_n(\Tpiv)\leq 1$ (any constant
    bound would also work). 
    As discussed in Section~\ref{sec:pivot.regularity}, 
    Galton--Watson trees that are not positive regular can
    have their expected $n$th generation size grow to infinity even in the critical case.
    However, even though pivot trees are not necessarily positive regular,
    we are not sure if it is possible for $\E\ell_n(\Tpiv)$ to grow to infinity when $\Tpiv$
    is critical. 
    %We have not managed to prove it impossible, nor to construct a counterexample.
  \end{remark}

\section{Supercritical pivot trees}
\label{sec:supercritical}
As usual, throughout this section we fix
a child distribution $\chi$, an automaton $A$ on a set of states $\Sigma$, 
and a fixed point $\vec{\nu}$ of the automaton distributional map $\Psi\colon D\to D$ 
corresponding to $A$ and $\chi$, and
we let $(T,\omega)$ be the random state tree for $\vec\nu$.

Our goal is to prove that if the pivot tree is supercritical, then $\vec\nu$
is rogue in the $\abs{\Sigma}=2$ case.
According to Proposition~\ref{prop:interpretability}, rogueness of $\vec\nu$ is equivalent
to nonmeasurability of $\omega$ with respect to $T$. Thus, we will try to
show that for $T$ in some class of trees of positive weight under the $\GW(\chi)$
measure, the colouring $\omega$ is nondeterministic.
The idea of the proof is that $\Tpiv$ is supercritical,
it occurs with positive probability that $\omega(R_T)=0$ and $\Tpiv$ survives.
On this event, we randomly choose an infinite path starting from the root of $\Tpiv$
and switch all the colours along it. This gives us a new coloured tree with the same underlying tree
but a different colour at the root. Since the new colouring of the tree only differs at one
vertex per level, it seems intuitive that it occurs with similar likelihood as the original one,
meaning that $\omega$ takes different values for the same tree with positive probability.

The difficulty lies in making rigorous the idea that the switched colouring has similar
probability as the original one. To do so, we use \emph{spine decompositions} as developed
by Lyons, Pemantle, and Peres \cite{LPP}, an elegant
probabilistic method for proving two branching processes absolutely continuous
or mutually singular to each other. 
The two processes we consider are $(T,\omega)$, conditioned on survival of the pivot tree,
and the switched version of this process described above.
We prove the switched version is
absolutely continuous with respect to the original. 
Under the assumption that $\vec\nu$ is interpretable, it is a probability one event that
the colour $\omega(R_T)$ is given
as a deterministic function of $T$. By absolute continuity, the colour of the root in the switched
process is equal to the same function of the tree. But this is a contradiction,
as we know that these colours differ while the trees are the same.

Let our set of colours be $\Sigma=\{0,1\}$.
Recall from the discussion after Proposition~\ref{prop:Tpiv.GW} that the definition of the
pivot tree is simpler in the two-colour case. A vertex is
pivotal for $(T,\omega)$ if swapping its colour results in the root swapping colours, and the pivot
tree $\Tpiv$ can be defined as the subtree of $T$ consisting of all pivotal vertices.
The pivot tree is Galton--Watson with types given by $\omega$, with no need for the sets
$\Aa$ and $B_v$ used in the definitions when there are three or more colours.

We now formalize this and add an extra bit of information to the types, extending $\omega$
to a map $\omega_*\colon V(T)\to \{\zd,\zs,\od,\os\}$
as follows.
For a vertex $v$, the 0 or 1 in $\omega_*(v)$ is given by $\omega(v)$. To decide on {d} or {s},
consider $(T(v),\omega|_{T(v)})$, the restriction of the random state tree to $v$ and its
descendants. If this tree has an infinite pivot tree, then $\omega_*(v)$ assigns type {s},
for \emph{survives}. If this tree has a finite pivot tree, then $\omega_*(v)$ assigns type {d},
for \emph{dies}. For $v\in\Tpiv$, this is equivalent to assigning either {s} or {d} depending
on whether $\Tpiv$ restricted to $v$ and its descendants is infinite or finite.
We will refer to vertices as s-labeled or d-labeled according to the value assigned to them
by $\omega_*$.
 Define $\vec{\nu}_*$ as the distribution of $\omega_*(R_T)$, a probability measure on $\{\zd,\zs,\od,\os\}$.
Let $\Tcolstar$ denote the space of trees with vertices labeled $\{\zd,\zs,\od,\os\}$.
For $(t,\tau_*)\in\Tcolstar$, let $[t,\tau_*]_n$ denote the subset of $\Tcolstar$ made up of trees
agreeing with $(t,\tau_*)$ up to the $n$th generation.
 
\begin{prop}\label{prop:(T,omega*)}\ 
  \begin{enumerate}[(a)]
    \item Conditional on $T|_n$, the distribution of $(\omega_*(v))_{v\in L_n(T)}$ is
      i.i.d.\ $\vec{\nu}_*$.\label{i:star1}
    \item 
      For $i\in\{0,1\}$, let $\rho(i)$ be the probability that $\Tpiv$ survives
      conditional on $\omega(R_T)=i$. Then conditional on $T|_n$ and on $\omega_{T|_n}$,
      the s- and d-labels given to each vertex $v\in L_n$ by $\omega_*$ are independent,
      with $v$ receiving an s-label with probability $\rho(\omega(v))$.
      \label{i:star2}
    \item
      The labeled tree $(T,\omega_*)$ is multitype Galton--Watson.
      \label{i:star3}
  \end{enumerate}
\end{prop}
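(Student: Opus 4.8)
**Plan for the proof of Proposition~\ref{prop:(T,omega*)}.**

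The plan is to establish the three parts in order, deriving each from the previous one together with the structural facts already proved about $(T,\omega)$ and $\Tpiv$. For part~\ref{i:star1}, recall that $\omega_*(v)$ records $\omega(v)$ together with whether the pivot tree of $(T(v),\omega|_{T(v)})$ is infinite. By property~\ref{RST2} of the random state tree, conditional on $T|_n$ the colourings $\omega(v)$ for $v\in L_n$ are i.i.d.~$\vec\nu$; moreover, conditional on $T|_n$ together with these colours, the subtrees $(T(v),\omega|_{T(v)})$ for $v\in L_n$ are independent, each distributed as an unconditioned random state tree (this is exactly the reasoning used in the proof that \ref{i:root.measurable} $\implies$ \ref{i:omega.measurable}). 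The ``survives'' flag appended to $\omega(v)$ is a measurable function of $(T(v),\omega|_{T(v)})$, so the full label $\omega_*(v)$ is a function of independent pieces, hence conditionally i.i.d.; its marginal law is $\vec\nu_*$ by definition. This simultaneously proves part~\ref{i:star2}: fixing also $\omega|_{T|_n}$ (equivalently fixing the colours $\omega(v)$ for $v\in L_n$), the subtrees descending from distinct $v\in L_n$ remain independent, and the event that the pivot tree of $(T(v),\omega|_{T(v)})$ is infinite has probability $\rho(\omega(v))$ by the definition of $\rho$; so the s/d flags are independent across $v\in L_n$ with the stated conditional probabilities.

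For part~\ref{i:star3}, I would argue exactly as in the proof of Proposition~\ref{prop:Tpiv.GW}, showing that the progeny of a vertex, described in the augmented types $\omega_*$, depends only on the vertex's own augmented type. Let $\Fff^*_n$ be the $\sigma$-algebra generated by $T|_n$ and by $(\omega_*(v))$ for vertices up to level~$n$. Conditioning on $\Fff^*_n$ refines conditioning on $T|_n$ only by the extra s/d information at levels $0,\dots,n$; but by part~\ref{i:star2}, given $T|_n$ and the $0/1$-colours on $L_n$, the s/d flags at $L_n$ are conditionally independent with laws depending only on those colours, so this extra information does not affect the conditional law of the subtrees descending from $L_n$ beyond what the colour already determines. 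Hence, conditional on $\Fff^*_n$, the vertices at level~$n$ give birth independently. It remains to check that the joint law of a vertex's children together with their $\omega_*$-labels is a function of the parent's $\omega_*$-label alone. The number of children and their $\omega$-colours depend only on $\omega(v)$ by Proposition~\ref{(T, omega) multi-type}. Given the children's $\omega$-colours, each child independently gets an s-label with probability $\rho$ of its colour (by part~\ref{i:star2} applied one level down) --- and here is the one point needing care: I must confirm that, conditional on $\omega(v)$, the children's s/d flags are independent of each other and of the parent's s-flag in the right way, so that the parent's own label $\os$ versus $\od$ does not inject extra dependence. This is true because the parent's pivot-tree-survives event is determined by the children's augmented types (its pivot tree survives iff some pivotal child has an s-label), so $\omega_*(v)$ is a deterministic function of $\{(\text{child } u,\ \omega_*(u))\}$, and no information flows the other way.

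The main obstacle is precisely that last consistency check in part~\ref{i:star3}: one must be careful that appending the s/d flag does not create the kind of ancestor-dependence seen in the three-colour pivot tree (the phenomenon illustrated after Proposition~\ref{prop:Tpiv.GW} with Figure~\ref{fig:pivot_tree}). The resolution is that, unlike the sets $B_v$ in the $\abs\Sigma\ge 3$ case, the flag $\os/\od$ is an intrinsic property of the subtree $(T(v),\omega|_{T(v)})$ --- it does not reference the root of $T$ at all --- so it is automatically consistent under restriction to subtrees, and the ``survives'' event at a parent is an explicit deterministic function of the children's flags and colours. Once this is in hand, parts~\ref{i:star1}--\ref{i:star3} follow by the same bookkeeping as in Propositions~\ref{(T, omega) multi-type} and \ref{prop:Tpiv.GW}, and I would present part~\ref{i:star3} by writing down the offspring law $\chi_{\mathrm{col}*}^{\omega_*(v)}$ explicitly in terms of $\chi$, $\vec\nu$, the automaton $A$, and the survival probabilities $\rho(0),\rho(1)$, and verifying the multitype Galton--Watson branching property against $\Fff^*_n$.
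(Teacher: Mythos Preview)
Your proposal is correct and follows essentially the same approach as the paper. For parts~\ref{i:star1} and~\ref{i:star2} you argue exactly as the paper does (subtrees $(T(v),\omega|_{T(v)})$ for $v\in L_n$ are conditionally i.i.d.\ copies of $(T,\omega)$, and the s/d flag is a measurable function of each), and for part~\ref{i:star3} both you and the paper identify the key fact that $\omega_*$ at a parent is determined by $\omega_*$ at its children, then verify the branching property by the same explicit computation as in Proposition~\ref{(T, omega) multi-type}.
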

\begin{proof}
  Given $T|_n$, the distribution of $(\omega(v))_{v\in L_n(T)}$ is
  i.i.d.\ $\vec{\nu}$, by definition of $(T,\omega)$.
  Hence, conditional on $T|_n$, the trees $(T(v),\omega|_{T(v)})$ for $v\in L_n(T)$
  are independent and distributed as the (unconditional) distribution of $(T,\omega)$. 
  Since $\vec{\nu}_*$ is the distribution
  of $\omega_*(R_T)$, it is thus the conditional distribution given $T|_n$ of each of 
  the independent $\omega_*(v)$ for $v\in L_n(T)$, proving \ref{i:star1}.
  For \ref{i:star2}, once we have conditioned on $T|_n$ and on $\omega|_{T|_n}$,
  for each $v\in L_n(T)$, the tree $(T(v),\omega|_{T(v)})$ is distributed as
  $(T,\omega)$ conditional on having state $\omega(v)$ at the root. Thus the pivot
  tree of $(T(v),\omega|_{T(v)})$ survives with probability $\rho(\omega(v))$.  
  The s- or d-label
  for $v$ depends only on $(T(v),\omega|_{T(v)})$ and hence are given independently.
  
  The proof of \ref{i:star3} is nearly the same
  as the proof of Proposition~\ref{(T, omega) multi-type}, though we will give it
  now in detail.
  For $\sigma_1,\ldots\sigma_k\in\{\zd,\zs,\od,\os\}$, let
  \begin{align*}
    \chicolstar(\sigma_1,\ldots,\sigma_k)=\chi(k)\vec\nu_*(\sigma_1)\cdots\vec\nu_*(\sigma_k).
  \end{align*}
  By the first claim, this is the probability that $R_T$ has exactly $k$ children
  whose types according to $\omega_*$ are $\sigma_1,\ldots,\sigma_k$, in order.
  For any type $\sigma\in\{\zd,\zs,\od,\os\}$ with $\vec\nu_*(\sigma)>0$, let
  $\chicolstar^\sigma(\sigma_1,\ldots,\sigma_k)$ denote the conditional probability
  that $R_T$ gives birth to $k$ children of types $\sigma_1,\ldots,\sigma_k$ according to $\omega_*$
  given that $\omega_*(R_T)=\sigma$. Observe that the value of $\omega_*$ at the root
  of a tree can be determined from the value of $\omega_*$ at its children: the $0$ or $1$ can be
  determined according to the automaton, and the s- or d-type can be determined according to whether
  there is a pivotal child of the root of s-type. Hence, if $\sigma$ is the type at the root
  corresponding to children of types $\sigma_1,\ldots,\sigma_k$, then
  \begin{align}\label{eq:chistar.condition}
    \chicolstar(\sigma_1,\ldots,\sigma_k)=\vec\nu_*(\sigma)\chicolstar^\sigma(\sigma_1,\ldots,\sigma_k).
  \end{align}
  
  Now, we seek to prove that given the first $n$ levels of $(T,\omega_*)$, each vertex~$v$ at
  level~$n$ independently gives birth according to $\chicolstar^{\omega_*(v)}$.
  Fix $(t,\tau_*)\in\Tcolstar$. 
  By the first claim of this proposition,
  \begin{align*}
    \P\Bigl[ (T,\omega_*)\in[t,\tau_*]_{n+1} \Bigr] = \P\Bigl[T\in[t]_{n+1}\Bigr] 
                                                 \prod_{u\in L_{n+1}(t)}\vec\nu_*\bigl(\tau_*(u)\bigr).
  \end{align*}
  With $C(v)$ denoting the set of a children of a vertex~$v$,
  \begin{align*}
        \P\Bigl[ (T,\omega)\in[t,\tau_*]_{n+1} \Bigr]
      &=\Biggl(\P\Bigl[T\in[t]_n\Bigr]\prod_{v\in L_n(t)}\chi\bigl(\abs{C(v)}\bigr)\Biggr)\prod_{u\in L_{n+1}(t)}\vec\nu_*\bigl(\tau_*(u)\bigr)\\
      &= \P\Bigl[T\in[t]_n\Bigr]\prod_{v\in L_n(t)}\chicolstar\bigl(\tau_*(u)_{u\in C(v)}\bigr).
  \end{align*}
  Continuing to follow the proof of Proposition~\ref{(T, omega) multi-type}, by
  \eqref{eq:chistar.condition},
  \begin{align*}
    \P\Bigl[ (T,\omega)\in[t,\tau_*]_{n+1} \Bigr]
      &= \P\Bigl[T\in[t]_n\Bigr]\prod_{v\in L_n(t)}\vec\nu_*(\tau_*(v))\chicolstar^{\tau_*(v)}\bigl(\tau_*(u)_{u\in C(v)}\bigr)\\
      &= \P\Bigl[(T,\omega)\in[t,\tau_*]_n\Bigr]\prod_{v\in L_n(t)}\chicolstar^{\tau_*(v)}\bigl(\tau_*(u)_{u\in C(v)}\bigr). \qedhere
  \end{align*}
\end{proof}

We will assume throughout the section that $\Tpiv$ is supercritical. 
This implies that either $\P[\omega_*(R_T)=\zs]>0$ or
$\P[\omega_*(R_T)=\os]>0$, but in fact both are true by 
Proposition~\ref{prop:2state.regularity}\ref{i:supercrit}.
Thus, it makes sense to consider the distribution of $(T,\omega_*)$
conditional on $\omega_*(R_T)=\zs$ or $\omega_*(R_T)=\os$.
With this in mind, we make a number of definitions.
Most important among them are the probability measures $\RST^\zs$, $\RST^\os$, $\RST^{\zs\to\os}$,
and $\RST^{\os\to\zs}$ on the space $\Tcolstar$, with $\RST$ standing for \emph{random state tree}. 
The measures $\RST^\zs$ and $\RST^\os$ are the distributions of
$(T,\omega_*)$ conditioned on $\omega_*(R_T)=\zs$ and $\omega_*(R_T)=\os$, respectively.
The measure $\RST^{\zs\to\os}$ is the distribution of a labeled tree obtained by sampling
from $\RST^\zs$, choosing an infinite path of pivotal vertices, and swapping every label
in the path. The measure $\RST^{\os\to\zs}$ is obtained in the same way, starting with $\RST^\os$
instead of $\RST^\zs$. Thus, $\RST^\os$ and $\RST^{\zs\to\os}$ are both distributions on labeled
trees with $\os$ at the root. Our goal, as we sketched before and will explain in more
detail shortly, is to prove that 
$\RST^{\zs\to\os}$ is absolutely continuous with respect to $\RST^\os$.

\begin{defines}[{Definitions of 
  $\RST^\ell$, $(T^\ell,\omega_*^\ell)$, $\Ts$, $\Ww(t,\tau_*)$, $\Ww_n(t,\tau_*)$, 
  $P_{\Ww(t,\tau_*)}$, $P_{\Ww_n(t,\tau_*)}$, $\tau_*^{\vec{v}}$, $\tau_*^x$,
  $\RST^{\zs\to\os}$, $\RST^{\os\to\zs}$, $(T^\zs,\omega_*^{\zs\to\os})$, and $(T^\os,\omega_*^{\os\to\zs})$}]\ \\
  For $\ell\in\{\zd,\od,\zs,\os\}$, let
  $\RST^\ell$ be the law of $(T,\omega_*)$ conditioned on $\omega(R_T)=\ell$.
  Let $(T^\ell,\omega^\ell_*)$ be a random variable distributed as $\RST^\ell$.
  Let $\Ts\subseteq\Tcolstar$ be the set of all trees $(t,\tau_*)$ labeled by
  $\{\zd,\od,\zs,\os\}$ that are compatible with the automaton, have their d
  and s labels consistent with the tree and other labels, and have $\zs$ or $\os$ at the root.
  This space is the union of the supports of $\RST^\zs$ and $\RST^\os$.
  It could also be defined as the set of all trees $(t,\tau_*)\in\Tcolstar$
  such that $[t,\tau_*]_n$ has positive probability
  under $\RST^\zs$ or $\RST^\os$ for all $n$.
  
  Given a deterministic tree $(t,\tau_*)\in\Ts$, let
  $\Ww(t,\tau_*)$ be the set of infinite paths in $(t,\tau_*)$ that start at $R_t$
  and contain only pivotal s-labeled vertices. Let $\Ww_n(t,\tau_*)$ be the set of paths
  from $R_t$ of length $n$ with the same property. Note that these sets are nonempty
  for any $(t,\tau_*)\in\Ts$, since any pivotal
  s-labeled vertex must have a pivotal s-labeled child.
  
  We define $P_{\Ww(t,\tau_*)}$ and $P_{\Ww_n(t,\tau_*)}$ to be distributions on
  $\Ww(t,\tau_*)$ and $\Ww_n(t,\tau_*)$, respectively, given as follows.
  Let $V_0=R_t$. Choose $V_1$ uniformly from the pivotal s-labeled children of $V_0$
  (as we said, there must be at least one). Then choose $V_2$ uniformly from
  the pivotal s-labeled children of $V_1$, and so on. Let  
  $P_{\Ww(t,\tau_*)}$ be the distribution of $(V_0,V_1,\ldots)$, and let
  $P_{\Ww_n(t,\tau_*)}$ be the distribution of $(V_0,\ldots,V_n)$.
  
  For an assignment $\tau\colon R(t)\to\Sigma$ and a vertex $v\in L_n(t)$, we defined
  $\tau^{v\to\gamma}$ as the colouring of $t|_n$ given by switching the colour of $v$
  to $\gamma$ and updating the colours at levels $0,\ldots,n-1$ according to the automaton.
  We now extend this definition to allow switching when the colours include s- and d-labels
  and to allow switching an infinite path.
  Suppose that $(t,\tau_*)\in\Tcolstar$.
  Given a path $\vec{u}=(u_0,u_1,\ldots)\in\Ww(t,\tau_*)$,
  let $\tau_*^{\vec{u}}$ denote $\tau_*$ with labels $\zs$ and $\os$ swapped along the path.
  It is easy to check that this new labeling is also compatible with the automaton~$A$, and
  that its s and d markings follow the same rules as before. For $\vec{u}\in\Ww_n(t,\tau_*)$,
  we define $\tau_*^{\vec{u}}$ in the same way, except that $\tau_*^{\vec{u}}$ is only a labeling
  of $t|_n$. For a vertex $x\in R(t)$, we use $\tau_*^x$
  as a shorthand for $\tau_*^{\vec{u}}$, where $\vec{u}$ is the path from $R_t$ to $x$.

  Finally, we define $\omega_*^{\zs\to\os}$ as the switched labeling $(\omega_*^\zs)^{\vec{V}}$,
  where $\vec{V}$ is sampled from $P_{\Ww(T^\zs,\omega_*^\zs)}$.
  To summarize, $(T^\zs,\omega_*^{\zs\to\os})$ is formed by the following procedure:
  First condition $(T,\omega_*)$ on $\omega_*=\zs$ to obtain $(T^\zs,\omega_*^\zs)$. Then, choose
  an infinite s-labeled path of pivotal vertices in $(T^\zs,\omega_*^\zs)$
  by starting at the root and successively choosing a pivotal s-labeled
  child at random. Finally, swap all $\zs$ and $\os$ labels along this path to obtain
  $(T^\zs,\omega_*^{\zs\to\os})$.
  We define $\omega_*^{\os\to\zs}$ analogously, and we define $\RST^{\zs\to\os}$
  and $\RST^{\os\to\zs}$ as the distributions of $(T^\zs,\omega_*^{\zs\to\os})$
  and $(T^{\os},\omega_*^{\os\to\zs})$, respectively.
\end{defines}
  We now lay out our plan for the section.  
  Our goal is to prove that $\RST^{\zs\to\os}\ll\RST^\os$.
  It follows quickly from this that $\vec\nu$ is rogue by an argument we sketch now.
  Supposing that $\vec\nu$ is interpretable, we can express $\omega(R_T)$ as $\iota(T)$ for a deterministic
  function $\iota\colon\Tt\to\Sigma$, by Proposition~\ref{prop:interpretability}.
  By definition of $\RST^\zs$ and $\RST^\os$, we have $\iota(T^\zs)=0$ a.s.\ and
  $\iota(T^\os)=1$ a.s. Recalling that $(T^\zs,\omega_*^{\zs\to\os})\sim\RST^{\zs\to\os}$, absolute continuity
  lets us conclude from $\iota(T^\os)=1$ a.s.\ that $\iota(T^\zs)=1$ a.s.,
  a contradiction.
  
  To prove the absolute continuity of $\RST^{\zs\to\os}$ with respect to $\RST^\os$,
  we use a technique of restricting these measures to successively larger $\sigma$-algebras
  and computing the Radon-Nikodym derivatives of the restricted measures.
  The result we use is well known:
  \begin{lemma}[{\cite[Lemma~12.2]{LP}}]\label{lem:dichotomy}
    Let $\mu$ and $\nu$ be probability measures on a $\sigma$-algebra $\Fff$.
    Suppose that $\Fff_1\subseteq\Fff_2\subseteq\cdots\subseteq\Fff$, and that $\cup_n\Fff_n$
    generates $\Fff$. Also suppose that $\mu|_{\Fff_n}$ is absolutely continuous with respect
    to $\nu|_{\Fff_n}$ with Radon-Nikodym derivative $X_n$. Define $X=\limsup_{n\to\infty} X_n$.
    Then
    \begin{align*}
      \mu\ll\nu \quad &\iff \quad X<\infty \text{ $\mu$-a.e.} \quad \iff \quad \int X\,d\nu = 1,\\
      \intertext{and}
      \mu\perp\nu \quad &\iff \quad X=\infty \text{ $\mu$-a.e.} \quad \iff \quad \int X\,d\nu=0.
    \end{align*}    
  \end{lemma}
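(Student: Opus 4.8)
The plan is to recognize $(X_n)$ as a nonnegative martingale under $\nu$ and to identify its almost-sure limit by carrying out a Lebesgue decomposition of $\mu$ relative to $\nu$ simultaneously at every level, through a common dominating measure. First I would fix versions of the derivatives $X_n$ and check that $(X_n)$ is a nonnegative $(\Fff_n)$-martingale under $\nu$: for $A\in\Fff_n\subseteq\Fff_{n+1}$ one has $\int_A X_{n+1}\,d\nu=\mu(A)=\int_A X_n\,d\nu$, so $\E_\nu[X_{n+1}\mid\Fff_n]=X_n$. Hence $(X_n)$ converges $\nu$-a.s.\ to a finite limit, which therefore coincides with $X=\limsup_n X_n$ off a $\nu$-null set; since $\int X_n\,d\nu=1$ for every $n$, Fatou's lemma gives $\int X\,d\nu\le 1$, and in particular $X<\infty$ holds $\nu$-a.s.\ with no further assumption.

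To pin down the limit I would introduce the probability measure $\lambda=\tfrac12(\mu+\nu)$, so that $\mu\ll\lambda$ and $\nu\ll\lambda$, and set $f=d\mu/d\lambda$ and $g=d\nu/d\lambda$, so $f+g=2$ $\lambda$-a.s. Then $f_n:=\E_\lambda[f\mid\Fff_n]$ and $g_n:=\E_\lambda[g\mid\Fff_n]$ are densities of $\mu|_{\Fff_n}$ and $\nu|_{\Fff_n}$ with respect to $\lambda|_{\Fff_n}$; the hypothesis $\mu|_{\Fff_n}\ll\nu|_{\Fff_n}$ forces $g_n>0$ $\lambda$-a.s., so one may take $X_n=f_n/g_n$. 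Since $\bigcup_n\Fff_n$ generates $\Fff$, L\'evy's upward theorem gives $f_n\to f$ and $g_n\to g$ $\lambda$-a.s., and therefore, $\lambda$-a.s., $X_n\to f/g$ on $\{g>0\}$ and $X_n\to\infty$ on $\{g=0\}$, where $f=2$. Consequently, $\lambda$-a.s.\ we have $X=f/g$ on $\{g>0\}$ and $X=\infty$ on $\{g=0\}$.

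The remainder is bookkeeping with the Lebesgue decomposition, whose absolutely continuous and singular parts of $\mu$ relative to $\nu$ are $A\mapsto\mu(A\cap\{g>0\})$ and $A\mapsto\mu(A\cap\{g=0\})$. Since $\{X=\infty\}$ agrees with $\{g=0\}$ up to a $\lambda$-null (hence $\mu$- and $\nu$-null) set, we get $\mu\ll\nu\iff\mu(\{g=0\})=0\iff X<\infty$ $\mu$-a.e., and $\mu\perp\nu\iff\mu(\{g>0\})=0\iff X=\infty$ $\mu$-a.e. A change of variables gives $\int X\,d\nu=\int_{\{g>0\}}(f/g)\,g\,d\lambda=\int_{\{g>0\}}f\,d\lambda=\mu(\{g>0\})=1-\mu(\{g=0\})$, the contribution of $\{g=0\}$ being zero because $\nu(\{g=0\})=0$; this quantity equals $1$ precisely when $\mu\ll\nu$ and $0$ precisely when $\mu\perp\nu$, which settles all the equivalences.

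The one step I would handle with care — and essentially the only subtlety — is the measurability bookkeeping: each $X_n$ is \emph{a priori} defined only $\nu|_{\Fff_n}$-almost everywhere, so to make the pathwise statement ``$X=\limsup_n X_n$'' meaningful $\mu$-almost everywhere (and not merely $\nu$-almost everywhere) one must commit to the concrete versions $f_n/g_n$ supplied by the $\lambda$-martingale. With that fixed, everything reduces to the martingale convergence theorem and the Lebesgue decomposition.
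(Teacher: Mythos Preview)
Your proof is correct and is essentially the standard argument (indeed, the one given in \cite{LP}): pass to the dominating measure $\lambda=\tfrac12(\mu+\nu)$, realize $X_n$ as the ratio of the two bounded $\lambda$-martingales $f_n=\E_\lambda[d\mu/d\lambda\mid\Fff_n]$ and $g_n=\E_\lambda[d\nu/d\lambda\mid\Fff_n]$, and read off the Lebesgue decomposition from the limiting set $\{g=0\}$. The paper itself does not supply a proof of this lemma but simply cites \cite[Lemma~12.2]{LP}, so there is nothing further to compare; your remark about fixing the versions $X_n=f_n/g_n$ so that the $\mu$-a.e.\ statements are meaningful is exactly the right caveat.
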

  
  In our case, we will restrict $\RST^{\zs\to\os}$ and $\RST^\os$ to the $\sigma$-algebra
  $\Fff_n$ generated by the first $n$ levels of the labeled tree. That is, we define $\Fff_n$ as
  the $\sigma$-algebra on $\Ts$ generated by the sets of the form $[t,\tau_*]_n$.
  
  To investigate these Radon--Nikodym derivatives, we start by giving
  representations of $\RST^{\zs\to\os}$ and $\RST^{\os\to\zs}$
  in terms of $\RST^\zs$ and $\RST^\os$:
  \begin{lemma}\label{lem:RST.rep}
    For any $(t,\tau_*)\in\Ts$,
    \begin{align}
      \RST^{\zs\to\os}[t,\tau_*]_n &= \sum_{\vec{u}\in\Ww_n(t,\tau_*)}\RST^\zs[t,\tau_*^{\vec{u}}]_n P_{\Ww_n(t,\tau_*^{\vec{u}})}(\vec{u}),\label{eq:RST.rep.1}\\
      \intertext{and}
      \RST^{\os\to\zs}[t,\tau_*]_n &= \sum_{\vec{u}\in\Ww_n(t,\tau_*)}\RST^\os[t,\tau_*^{\vec{u}}]_n P_{\Ww_n(t,\tau_*^{\vec{u}})}(\vec{u}).\label{eq:RST.rep.2}
    \end{align}
  \end{lemma}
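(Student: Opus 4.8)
The plan is to compute $\RST^{\zs\to\os}[t,\tau_*]_n$ by unwinding the construction of $(T^\zs,\omega_*^{\zs\to\os})$ and conditioning on the first $n$ vertices of the randomly selected switching path. Fix $(t,\tau_*)\in\Ts$ and $n\ge 1$. Recall that $(T^\zs,\omega_*^{\zs\to\os})$ arises by sampling $(T^\zs,\omega_*^\zs)\sim\RST^\zs$, drawing an infinite path $\vec{V}\sim P_{\Ww(T^\zs,\omega_*^\zs)}$, and setting $\omega_*^{\zs\to\os}=(\omega_*^\zs)^{\vec{V}}$. Within the first $n$ generations, $(\omega_*^\zs)^{\vec{V}}$ differs from $\omega_*^\zs$ only at the vertices $V_0,\dots,V_n$; and since $\abs{\Sigma}=2$, pivotality of a level-$k$ vertex is a function of the first $k$ levels of the colouring while its s-/d-label is recorded by $\omega_*$, so the law of the first $n$ steps of $\vec{V}$, conditional on $(T^\zs,\omega_*^\zs)|_n$, depends only on $(T^\zs,\omega_*^\zs)|_n$ and equals the first-$n$-step marginal of $P_{\Ww(T^\zs,\omega_*^\zs)}$, that is, $P_{\Ww_n(T^\zs,\omega_*^\zs)}$.

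Using that swapping $\zs\leftrightarrow\os$ along a length-$n$ path is an involution on labelings of $t|_n$, the event $\{(T^\zs,\omega_*^{\zs\to\os})\in[t,\tau_*]_n\}$ decomposes as a disjoint union, indexed by the possible value $\vec{u}$ of the first $n$ steps of $\vec{V}$, of the events on which $(T^\zs,\omega_*^\zs)\in[t,\tau_*^{\vec{u}}]_n$ and $\vec{V}$ traces out $\vec{u}$ over its first $n$ steps; since on $\{(T^\zs,\omega_*^\zs)\in[t,\tau_*^{\vec{u}}]_n\}$ any admissible path for the switching procedure consists of pivotal s-labeled vertices, the index set turns out to be exactly $\Ww_n(t,\tau_*)$, as I explain below. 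On each such event the conditional probability that $\vec{V}$ traces out $\vec{u}$ equals $P_{\Ww_n(t,\tau_*^{\vec{u}})}(\vec{u})$ by the previous paragraph, so summing over $\vec{u}\in\Ww_n(t,\tau_*)$ yields \eqref{eq:RST.rep.1}. Equation \eqref{eq:RST.rep.2} follows by the identical argument with the roles of $\zs$ and $\os$ reversed and $\RST^\os$ in place of $\RST^\zs$.

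The step requiring the most care is the identification of the index set: one must verify that swapping $\zs\leftrightarrow\os$ along a path $\vec{u}=(u_0,\dots,u_n)$ of pivotal s-labeled vertices of $(t,\tau_*)$ produces a labeling $\tau_*^{\vec{u}}$ for which the very same path $\vec{u}$ again consists of pivotal s-labeled vertices. The s-/d-markings are clearly unaffected along $\vec{u}$, so the content is the preservation of pivotality; this is checked by observing that switching $u_i$ in $\tau_*^{\vec{u}}$ back to its $\tau_*$-value restores the $\tau_*$-colours among the children of $u_{i-1}$ and hence cascades up the path, flipping $u_{i-1},\dots,u_0$, and in particular the root. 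Because switching along $\vec{u}$ is an involution, this also gives the reverse implication $\vec{u}\in\Ww_n(t,\tau_*^{\vec{u}})\implies\vec{u}\in\Ww_n(t,\tau_*)$, so that the decomposition in the previous paragraph is indeed indexed by $\Ww_n(t,\tau_*)$, and every term with $\vec{u}\notin\Ww_n(t,\tau_*)$ contributes zero.
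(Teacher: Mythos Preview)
Your proof is correct and follows essentially the same approach as the paper's own proof: both decompose the event $\{(T^\zs,\omega_*^{\zs\to\os})\in[t,\tau_*]_n\}$ according to the first $n$ steps of the switching path $\vec{V}$, use the involution property of the swap to identify the index set as $\Ww_n(t,\tau_*)$ via the equivalence $\vec{u}\in\Ww_n(t,\tau_*)\iff\vec{u}\in\Ww_n(t,\tau_*^{\vec{u}})$, and then sum the disjoint contributions. Your version is somewhat more explicit about why pivotality is preserved under the swap and why the conditional law of $(V_0,\dots,V_n)$ depends only on the first $n$ levels, but these are elaborations of points the paper states more tersely.
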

  \begin{proof}
    These statements follow very directly from the definitions.
    Recall that $(T^{\zs},\omega_*^{\zs\to\os})$ differs from $(T^\zs,\omega_*^\zs)$
    along a random path $(V_i)_{i\geq 0}$ sampled from $P_{\Ww(T^\zs,\omega_*^\zs)}$, in which
    all $\zs$ and $\os$ labels have been swapped. Hence,
    \begin{align*}
      (T^{\zs},\omega_*^{\zs\to\os})\in[t,\tau_*]_n
    \end{align*}
    holds if and only if
    \begin{align}\label{eq:rh.statement}
      (T^\zs,\omega_*^\zs)\in[t,\tau_*^{\vec{u}}]\text{ for some }\vec{u}\in\Ww_n(t,\tau_*^{\vec{u}}),
    \text{ and } (V_0,\ldots,V_n)=\vec{u}.
    \end{align}
    Since $\vec{u}\in\Ww_n(t,\tau_*^{\vec{u}})$ if
    and only if $\vec{u}\in\Ww_n(t,\tau_*)$, we can refine \eqref{eq:rh.statement} to
    \begin{align}\label{eq:rh.refined}
      (T^\zs,\omega_*^\zs)\in[t,\tau_*^{\vec{u}}]\text{ for some }\vec{u}\in\Ww_n(t,\tau_*),
    \text{ and } (V_0,\ldots,V_n)=\vec{u}.
    \end{align}
    As the events in \eqref{eq:rh.refined} are disjoint for different choices of $\vec{u}$,
    \begin{align*}
      \P\Bigl[ (T^{\zs},&\omega_*^{\zs\to\os})\in[t,\tau_*]_n \Bigr]\\
        &=\sum_{\vec{u}\in\Ww_n(t,\tau_*)} \P\Bigl[\text{$(T^\zs,\omega_*^\zs)\in[t,\tau_*^{\vec{u}}]_n$
                                       and $(V_0,\ldots,V_n)=\vec{u}$} \Bigr]\\
        &= \sum_{\vec{u}\in\Ww_n(t,\tau_*)} \P\Bigl[(T^\zs,\omega_*^\zs) \in [t,\tau_*^{\vec{u}}]_n\Bigr]P_{\Ww_n(t,\tau_*^{\vec{u}})}(\vec{u}),
    \end{align*}
    which is a restatement of \eqref{eq:RST.rep.1}. The proof of \eqref{eq:RST.rep.2} is identical.
  \end{proof}

  Define a map $r_n$ on $\Ts$ as follows.
  For $(t,\tau_*)\in\Ts$ with $\zs$ at the root, let
  \begin{align}\label{eq:r_n.def.1}
    r_n(t,\tau_*) &= \frac{\RST^{\os\to\zs}[t,\tau_*]_n}{\RST^\zs[t,\tau_*]_n},
  \end{align}
  and for $(t,\tau_*)\in\Ts$ with $\os$ at the root, let
  \begin{align}\label{eq:r_n.def.2}
    r_n(t,\tau_*) &= \frac{\RST^{\zs\to\os}[t,\tau_*]_n}{\RST^\os[t,\tau_*]_n}.
  \end{align}
  Thus, $r_n(t,\tau_*)$ matches the Radon--Nikodym derivative either of
  $\RST^{\os\to\zs}|_{\Fff_n}$ with respect to $\RST^{\zs}|_{\Fff_n}$ or of
  $\RST^{\zs\to\os}|_{\Fff_n}$ with respect to $\RST^\os|_{\Fff_n}$, depending
  on $\tau_*(R_t)$. 
  To prove that $\RST^{\zs\to\os}\ll\RST^\os$, it therefore suffices  
  by Lemma~\ref{lem:dichotomy} to show that
  \begin{align}\label{eq:limsup.finite}
    \limsup_{n\to\infty} r_n(T^\zs,\omega_*^{\zs\to\os})<\infty \text{ a.s.}
  \end{align}

  We define
  \begin{align*}
    f_n(t,\tau_*) &= \sum_{\vec{u}\in\Ww_n(t,\tau_*)} P_{\Ww_n(t,\tau_*^{\vec{u}})}(\vec{u}),
  \end{align*}
  with $f_0(t,\tau_*)$ taken to be $1$.
  According to the next lemma, we can use this simpler function $f_n$ as a stand-in for
  $r_n$.
  \begin{lemma}\label{lem:f_n.r_n}
    For some constant $1\leq C<\infty$ depending on $\vec{\nu}_*$, 
    it holds for all $n\geq 0$ and all $(t,\tau_*)\in\Ts$ that
    \begin{align*}
      \frac{1}{C} f_n(t,\tau_*) &\leq r_n(t,\tau_*)\leq Cf_n(t,\tau_*).
    \end{align*}
  \end{lemma}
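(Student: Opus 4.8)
The plan is to compare the two representations from Lemma~\ref{lem:RST.rep} with the definition of $f_n$, isolating the discrepancy as a ratio of $\RST^\zs$ and $\RST^\os$ probabilities of labeled trees that differ only along a single root-to-level-$n$ path. Concretely, combining \eqref{eq:r_n.def.2} with \eqref{eq:RST.rep.1} gives
\begin{align*}
  r_n(t,\tau_*) = \sum_{\vec{u}\in\Ww_n(t,\tau_*)}\frac{\RST^\zs[t,\tau_*^{\vec{u}}]_n}{\RST^\os[t,\tau_*]_n}P_{\Ww_n(t,\tau_*^{\vec{u}})}(\vec{u})
\end{align*}
for $(t,\tau_*)$ with $\os$ at the root, and symmetrically with $\zs\leftrightarrow\os$ when the root is $\zs$. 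So the lemma reduces to showing that each ratio $\RST^\zs[t,\tau_*^{\vec{u}}]_n / \RST^\os[t,\tau_*]_n$ (and its mirror) is bounded above and below by constants depending only on $\vec{\nu}_*$.

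First I would use Proposition~\ref{prop:(T,omega*)}\ref{i:star3}: since $(T,\omega_*)$ is multitype Galton--Watson, the conditional measure $\RST^\ell$ is the same Galton--Watson tree reweighted to start from type $\ell$, and a product-form expression for $\RST^\ell[t,\tau_*]_n$ follows from the recursion $\chicolstar(\sigma_1,\ldots,\sigma_k)=\vec\nu_*(\sigma)\chicolstar^\sigma(\sigma_1,\ldots,\sigma_k)$ established in that proof. Telescoping this recursion down the tree shows that for any $(t,\tau_*)\in\Ts$,
\begin{align*}
  \RST^{\tau_*(R_t)}[t,\tau_*]_n = \frac{1}{\vec\nu_*(\tau_*(R_t))}\,\P\bigl[(T,\omega_*)\in[t,\tau_*]_n\bigr]
  = \frac{1}{\vec\nu_*(\tau_*(R_t))}\prod_{u}\vec\nu_*(\tau_*(u)),
\end{align*}
the product running over all non-root vertices of $t|_n$ times the appropriate $\chi$ factors, which cancel when we form the ratio. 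Now $\tau_*^{\vec{u}}$ differs from $\tau_*$ only by swapping $\zs\leftrightarrow\os$ at the $n+1$ vertices $u_0,\ldots,u_n$ on the path, so the product over non-root vertices changes by a factor that is a product of at most $n$ terms of the form $\vec\nu_*(\os)/\vec\nu_*(\zs)$ or its reciprocal --- but crucially, swapping $\zs$ and $\os$ preserves the measure-$\vec\nu_*$ weight only at the vertices other than the root. The root contributes the extra $1/\vec\nu_*(\cdot)$ factor. Tracking this carefully, all the path contributions telescope (each interior vertex on the path appears once in a numerator and once in a denominator when we also account for $\RST^{\zs\to\os}$ vs $\RST^\os$ having roots of opposite $0/1$ type but we only swap $s$-labels), and what survives is a bounded number --- in fact at most two --- of factors $\vec\nu_*(\os)/\vec\nu_*(\zs)$, $\vec\nu_*(\zs)/\vec\nu_*(\os)$, and the root-correction ratios $\vec\nu_*(\os)/\vec\nu_*(\zs)$ etc. Setting $C$ to be the maximum over all such finite products of these four quantities $\vec\nu_*(\zd),\vec\nu_*(\zs),\vec\nu_*(\od),\vec\nu_*(\os)$ and their reciprocals (all positive and finite by supercriticality, via Proposition~\ref{prop:2state.regularity}\ref{i:supercrit}) gives the claimed two-sided bound uniformly in $n$ and $(t,\tau_*)$.

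The main obstacle I expect is bookkeeping the telescoping precisely: one must verify that when passing from $\RST^\zs[t,\tau_*^{\vec{u}}]_n$ to $\RST^\os[t,\tau_*]_n$, the $\vec\nu_*$-weights of all vertices \emph{strictly below level~$0$ and not on the path} are identical in the two trees (immediate, since $\tau_*$ and $\tau_*^{\vec u}$ agree there), that the weights along the path differ only by swapping two of the four label-values, and that the $\chi(\abs{C(v)})$ factors are unchanged because the underlying unlabeled tree $t|_n$ is the same. Once this is set up, the ratio is manifestly a product of at most $2(n{+}1)$ ratios of $\vec\nu_*$-values, but the point is that it is in fact a product of a \emph{bounded} number: swapping $s\leftrightarrow d$ is never involved (only $\zs\leftrightarrow\os$), and the non-root path vertices all keep their $\vec\nu_*$-weight up to a global factor that telescopes, leaving only the root discrepancy and possibly one boundary term at level~$n$. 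I would state $C=\max_{\ell,\ell'}\vec\nu_*(\ell)/\vec\nu_*(\ell')$ raised to a small fixed power (say the fourth), which certainly dominates, and not optimize the constant. The positivity of all four entries of $\vec\nu_*$ --- needed so $C<\infty$ --- is exactly what supercriticality of $\Tpiv$ buys us through Proposition~\ref{prop:2state.regularity}\ref{i:supercrit}, which guarantees $\P[\omega_*(R_T)=\zs]>0$ and $\P[\omega_*(R_T)=\os]>0$, and the $\zd,\od$ entries are positive because $\chi$ charges $\{2,3,\ldots\}$ and has a fixed point with full support.
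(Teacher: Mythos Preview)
Your reduction via Lemma~\ref{lem:RST.rep} is exactly right, and the overall shape of the argument is correct. But you are making it harder than it needs to be, and the displayed formula you write for $\RST^{\tau_*(R_t)}[t,\tau_*]_n$ is not correct as stated: the Galton--Watson product form has a $\vec\nu_*(\tau_*(v))$ in the \emph{denominator} for every non-leaf vertex, not just at the root. If you carried your formula forward literally, the ratio would pick up $n$ uncontrolled factors of $\vec\nu_*(\os)/\vec\nu_*(\zs)$ along the path and you would not get a bound uniform in $n$. Your telescoping intuition is right and would rescue you once you restore the missing denominators, but the bookkeeping you flag as the ``main obstacle'' is real if you proceed this way.

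The paper avoids all of this by invoking Proposition~\ref{prop:(T,omega*)}\ref{i:star1} directly rather than the product form: conditional on $T|_n$, the labels at level~$n$ are i.i.d.\ $\vec\nu_*$, so
\[
  \P\bigl[(T,\omega_*)\in[t,\tau_*]_n\bigr] = \P[T|_n=t|_n]\prod_{x\in L_n(t)}\vec\nu_*(\tau_*(x)).
\]
Now $\tau_*^{\vec u}$ and $\tau_*$ agree on $L_n(t)$ except at the \emph{single} endpoint $u_n$, where one is $\zs$ and the other $\os$. So the unconditioned probabilities differ by a single factor bounded by $C_1=\max\{\vec\nu_*(\zs),\vec\nu_*(\os)\}/\min\{\vec\nu_*(\zs),\vec\nu_*(\os)\}$, and conditioning on the root label contributes one more such factor, giving $C=C_1^2$. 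No telescoping is needed, and only positivity of $\vec\nu_*(\zs)$ and $\vec\nu_*(\os)$ is used (the $\zd,\od$ entries never enter).
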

  \begin{proof}
    We aim to show that for some $1\leq C<\infty$, it holds for all $n\geq 0$,
    $\vec{u}\in\Ww_n(t,\tau_*)$, and  $(t,\tau_*)\in\Ts$ with $\tau_*(R_t)=\os$ that
    \begin{align*}
      \frac{1}{C}\RST^{\os}[t,\tau_*]_n\leq \RST^{\zs}[t,\tau_*^{\vec{u}}]_n\leq C\,\RST^{\os}[t,\tau_*]_n,
      \intertext{and it holds for all $n\geq 0$,
    $\vec{u}\in\Ww_n(t,\tau_*)$, and  $(t,\tau_*)\in\Ts$ with $\tau_*(R_t)=\zs$ that}
      \frac{1}{C}\RST^{\zs}[t,\tau_*]_n\leq \RST^{\os}[t,\tau_*^{\vec{u}}]_n\leq C\,\RST^{\zs}[t,\tau_*]_n.
    \end{align*}
    Once we prove this, the result follows immediately from Lemma~\ref{lem:RST.rep}
    and the definition of $r_n$.
    
    To prove these statements, we go back to the unconditioned tree $(T,\omega_*)$.
    Let 
    \begin{align*}
      C_1=\frac{\max_{\sigma\in\{\zs,\os\}}\vec{\nu}_*(\sigma)}
             {\min_{\sigma\in\{\zs,\os\}}\vec{\nu}_*(\sigma)}.
    \end{align*}
    By Proposition~\ref{prop:(T,omega*)}\ref{i:star1},
    \begin{align*}
      \P\Bigl[(T,\omega_*)\in[t,\tau_*^{\vec{u}}]_n\Bigr]
        &= \P[T|_n=t|_n] \prod_{x\in L_n(t)} \vec{\nu}_*(\tau_*^{\vec{u}}(x))\\
        &\leq C_1\P[T|_n=t|_n] \prod_{x\in L_n(t)} \vec{\nu}_*(\tau_*(x))
        = C_1\P\Bigl[(T,\omega_*)\in[t,\tau_*]_n\Bigr]
    \end{align*}
    since $\tau_*^{\vec{u}}$ and $\tau_*$ match each other on $L_n(t)$ except
    at a single vertex, where one assigns $\zs$ and the other assigns $\os$.
    
    Suppose that $\tau_*(R_t)=\os$. Then
    \begin{align*}
      \RST^\zs[t,\tau_*^{\vec{u}}]_n &= \frac{\P\bigl[(T,\omega_*)\in[t,\tau_*^{\vec{u}}]_n\bigr]}{\vec{\nu}_*(\zs)}\\
        &\leq \frac{C_1\P\Bigl[(T,\omega_*)\in[t,\tau_*]_n\Bigr]}{\vec{\nu}_*(\zs)}\\
        &\leq \frac{C_1^2\P\Bigl[(T,\omega_*)\in[t,\tau_*]_n\Bigr]}{\vec{\nu}_*(\os)}
          = C_1^2\,\RST^{\os}[t,\tau_*]_n.
    \end{align*}
    The lower bound on $\RST^\zs[t,\tau_*^{\vec{u}}]_n$ and the bounds on $\RST^\os[t,\tau_*^{\vec{u}}]_n$
    follow by nearly identical proofs.
  \end{proof}
  
  Next, we recast $f_n(t,\tau_*)$ as a weighted sum over paths.
  \begin{defines}[$N(x,t,\tau_*)$ and $w_{t,\tau_*}(x,y)$]\label{def:N,w}
%    \toby{Maybe $N(x,t,\tau_*)$ could be improved to some more descriptive notation?}
  For a vertex $x\in V(t)$, let $N(x,t,\tau_*)$ be the number of pivotal s-labeled
  children of $x$ in $(t,\tau_*)$. Suppose that $x$ and $y$ are respectively a vertex and its child
  in some path in $\Ww(t,\tau_*)$. Define
  \begin{align*}
    w_{t,\tau_*}(x,y)= N(x,t,\tau_*^{y})^{-1},
  \end{align*}
  which we will view as a weight on the edge from $x$ to $y$. We will shorten this to
  $w(x,y)$ when the tree $(t,\tau_*)$ is clear from context.
  \end{defines}
  In words, $w(x,y)$ is the reciprocal of the number of pivotal s-labeled children
  of $x$ \emph{after} swapping all labels on the path from the root to $x$ to $y$.
  Note that this count of pivotal $s$-labeled children is
  never zero for such an $x$ and $y$, since $y$ is always s-labeled
  and pivotal for $(t,\tau_*^y)$ as a consequence of belonging to a path in $\Ww(t,\tau_*)$.    
  \begin{lemma}\label{lem:f_n}
    For any $(t,\tau_*)\in\Ts$,
    \begin{align}
      f_n(t,\tau_*) &= \sum_{\vec{u}\in\Ww_n(t,\tau_*)} \prod_{i=0}^{n-1} w(u_i,u_{i+1}),\label{eq:weights}
    \end{align}
    which we can express recursively as
    \begin{align}
      f_n(t,\tau_*) &= \sum_x w(R_t,x) f_{n-1}(t(x),\tau_*|_{t(x)}),\label{eq:recursive}
    \end{align}
    where $x$ ranges over the pivotal s-labeled children of $R_t$.
  \end{lemma}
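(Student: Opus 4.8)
The plan is to establish \eqref{eq:weights} first and then deduce the recursion \eqref{eq:recursive} from it.

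For \eqref{eq:weights}, I would unwind the sampling procedure defining $P_{\Ww_n}$: it picks each successive vertex uniformly among the pivotal s-labeled children of the previous one, so for any path $\vec{v}=(v_0,\dots,v_n)\in\Ww_n(s,\sigma_*)$ one has $P_{\Ww_n(s,\sigma_*)}(\vec{v})=\prod_{i=0}^{n-1}N(v_i,s,\sigma_*)^{-1}$. I apply this with $s=t$ and $\sigma_*=\tau_*^{\vec{u}}$; this is legitimate because $\vec{u}\in\Ww_n(t,\tau_*^{\vec{u}})$, since swapping $\zs\leftrightarrow\os$ along a path of pivotal s-labeled vertices leaves each of those vertices pivotal and s-labeled (the same fact already used in the proof of Lemma~\ref{lem:RST.rep}). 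This reduces \eqref{eq:weights} to the identity $N(u_i,t,\tau_*^{\vec{u}})=N(u_i,t,\tau_*^{u_{i+1}})=w(u_i,u_{i+1})^{-1}$, which is essentially immediate: $N(u_i,t,\cdot)$ depends only on the restriction of the colouring to the first $i+1$ levels (the s-label of a child of $u_i$ is that child's own label, and whether the child is pivotal is decided by recolouring levels $0,\dots,i$ via $A$), and the colourings $\tau_*^{\vec{u}}$ and $\tau_*^{u_{i+1}}$ agree on those levels — both swap exactly the vertices $u_0,\dots,u_{i+1}$ there and differ only at $u_{i+2},\dots,u_n$, all of which lie strictly below $u_{i+1}$. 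Summing over $\vec{u}\in\Ww_n(t,\tau_*)$ gives \eqref{eq:weights}.

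For \eqref{eq:recursive}, I would split the sum in \eqref{eq:weights} according to the first edge $u_1=x$, with $x$ ranging over the pivotal s-labeled children of $R_t$. Two restriction facts then finish it. First, for $j\ge 2$ a vertex $u_j$ is pivotal s-labeled for $(t,\tau_*)$ if and only if it is pivotal s-labeled for $(t(x),\tau_*|_{t(x)})$: its s-label is read off from $\tau_*$ on $t(u_j)\subseteq t(x)$, and since $x=u_1$ is itself pivotal for $(t,\tau_*)$, switching a descendant of $x$ changes the colour of $R_t$ exactly when it changes the colour of $x$. Hence the tails $(u_1,\dots,u_n)$ of the paths in $\Ww_n(t,\tau_*)$ with $u_1=x$ are precisely the elements of $\Ww_{n-1}(t(x),\tau_*|_{t(x)})$. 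Second, for $i\ge 1$ one has $w_{t,\tau_*}(u_i,u_{i+1})=w_{t(x),\tau_*|_{t(x)}}(u_i,u_{i+1})$: the colouring $\tau_*^{u_{i+1}}$ restricted to $t(x)$ is $(\tau_*|_{t(x)})^{u_{i+1}}$, and — once more using that $x$ stays pivotal after a swap along a path through it — the set of pivotal s-labeled children of $u_i$ is the same whether computed inside $(t,\tau_*^{u_{i+1}})$ or inside $(t(x),(\tau_*|_{t(x)})^{u_{i+1}})$, so the corresponding values of $N(u_i,\cdot)$ coincide. Substituting these,
\begin{align*}
  f_n(t,\tau_*) &= \sum_x w(R_t,x)\sum_{\vec{u}'\in\Ww_{n-1}(t(x),\tau_*|_{t(x)})}\prod_{i=0}^{n-2}w_{t(x),\tau_*|_{t(x)}}(u'_i,u'_{i+1})\\
  &= \sum_x w(R_t,x)\,f_{n-1}(t(x),\tau_*|_{t(x)}),
\end{align*}
the last equality by \eqref{eq:weights}.

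The one genuinely delicate part will be the bookkeeping: keeping straight which colouring ($\tau_*$, $\tau_*^{u_{i+1}}$, $\tau_*^{\vec{u}}$, or a restriction of one of these to $t(x)$) controls each quantity, and verifying in each instance that pivotality is insensitive to the part of the colouring strictly below the vertex under consideration, while the (syntactic) s/d-label is insensitive to the part at or above it — the interaction at $u_{i+1}$ being harmless precisely because the swap $\zs\leftrightarrow\os$ preserves s-labels. It is worth recording at the start that each index set above is nonempty and each $N(\cdot,\cdot,\cdot)$ appearing in a denominator is at least $1$, which follows from $(t,\tau_*)\in\Ts$ forcing $R_t$ to be s-labeled, together with the fact that every pivotal s-labeled vertex has a pivotal s-labeled child.
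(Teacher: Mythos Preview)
Your proof is correct and follows the same approach as the paper's: establish \eqref{eq:weights} by expanding $P_{\Ww_n(t,\tau_*^{\vec{u}})}(\vec{u})$ as a product of reciprocal counts and identifying each factor with $w(u_i,u_{i+1})$, then deduce \eqref{eq:recursive} by partitioning $\Ww_n(t,\tau_*)$ according to $u_1$. The paper's own proof is considerably terser---it declares the key identity $P_{\Ww_n(t,\tau_*^{\vec{u}})}(\vec{u})=\prod_{i=0}^{n-1}N(u_i,t,\tau_*^{u_{i+1}})^{-1}$ ``evident'' and dispatches \eqref{eq:recursive} in one line---whereas you spell out why $N(u_i,t,\tau_*^{\vec{u}})=N(u_i,t,\tau_*^{u_{i+1}})$ and verify the restriction-to-$t(x)$ facts explicitly, but the underlying argument is the same.
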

  \begin{proof}
    To prove \eqref{eq:weights}, we need to show that for any
    $\vec{u}=(u_0,\ldots,u_n)\in\Ww_n(t,\tau_*)$,
    \begin{align*}
      P_{\Ww_n(t,\tau_*^{\vec{u}})}(\vec{u}) &= \prod_{i=0}^{n-1}N\bigl(u_i,t,\tau_*^{u_{i+1}}\bigr)^{-1}.
    \end{align*}
    This is evident, as $P_{\Ww_n(t,\tau_*^{\vec{u}})}(\vec{u})$ is the probability
    that $\vec{u}$ is selected by the procedure of starting at the root in $(t,\tau_*^{\vec{u}})$ and
    uniformly picking a pivotal $s$-labeled child, then another pivotal s-labeled child, and so on.    
    Equation~\eqref{eq:recursive} follows from \eqref{eq:weights} by partitioning $\Ww_n(t,\tau_*)$
    into paths going through each of the s-labeled children of the root.
  \end{proof}
  Recall that $\omega_*^{\zs\to\os}$ is formed by swapping labels in
  $\omega_*^\zs$ along a random path $\vec{V}=(V_i)_{i\geq 0}$, where $V_0=R_{T^\zs}$.
  Call this path the \emph{spine} of $(T^\zs,\omega^{\zs\to\os})$.
  We now give some terminology for describing the weights (in the sense
  of Definitions~\ref{def:N,w}) of edges along and hanging off the spine.
  
  \begin{defines}[Definitions of $V_{i,j}$, $W_i$, $W_{i,j}$, $\Tspine$, $\Tspine_n$,
    $\Ggg$, and $\Ggg_n$] \label{def:spine}
  Let $V_{i,1},\ldots,V_{i,k_i}$ be the pivotal s-labeled children of $V_i$ 
  in $(T^\zs,\omega_*^{\zs\to\os})$ other than $V_{i+1}$.
  Let $W_i=w_{T^\zs,\omega_*^{\zs\to\os}}(V_i,V_{i+1})$, and let
  $W_{i,j}=w_{T^\zs,\omega_*^{\zs\to\os}}(V_i,V_{i,j})$.
  
  Let $\Tspine\subseteq T^\zs$ be the subtree consisting of $\vec{V}$ and all
  vertices $V_{i,j}$. Let $\Tspine_n$ be the restriction of $\Tspine$ to height~$n$.
  Let $\Ggg$ be the $\sigma$-algebra generated by $\vec{V}$, 
  $\Tspine$ and $\omega_*^{\zs\to\os}|_{\Tspine}$. Let  
  $\Ggg_n$ be the $\sigma$-algebra generated by $(V_0,\ldots,V_n)$, 
  $\Tspine_n$ and by $\omega^{\zs\to\os}|_{\Tspine_n}$.
  See Figure~\ref{fig:Tspine} for a depiction of the information
  captured by these $\sigma$-algebras. 
  \end{defines}
  The key idea in analyzing $f_n(T^\zs,\omega_*^{\zs\to\os})$ is that while $(T^\zs,\omega_*^{\zs\to\os})$
  behaves unusually along the spine, starting from any vertex $V_{i,j}$ it is
  a multitype Galton--Watson tree with the same child distributions as $(T,\omega_*)$.
  This is formally expressed in Proposition~\ref{prop:immigration}.
  Thus, understanding the weights along the spine of $(T^\zs,\omega_*^{\zs\to\os})$ as well
  as the weights on $(T,\omega_*)$ is enough to understand the weights on all
  of $(T^\zs,\omega_*^{\zs\to\os})$.  
  This is the same idea used by Pemantle, Peres, and Lyons to prove the Kesten--Stigum
  theorem (see \cite[Section 3]{LPP} or \cite[Chapter 12]{LP}).
  
%  Let 
%  \begin{align*}
%    \Lspine_n&= \bigl\{ V_n,\,V_{n-1,1},\ldots V_{n-1,k_{n-1}}\bigr\},
%  \end{align*}
%  the leaves of $\Tspine_n$.
%  To denote the restriction of $\omega_*^{\zs\to\os}$ to the subtree rooted at $x$, we will
%  write $\omega_x^{\zs\to\os}$ rather than the more cumbersome $(\omega_*^{\zs\to\os})_x$.
    
  \begin{figure}
    \begin{center}
      \begin{tikzpicture}[yscale=1.41]
        \path (0,0)    node (V0) {\textbf{1s}}
              (-2,-1)  node (V1) {\textbf{1s}}
              (.5,-1)  node (V01) {\textbf{0s}}
              (1.5,-1) node (V02) {\textbf{0s}}
              (-2.75,-2) node (V2) {\textbf{0s}}
              (-1.75,-2) node (V11) {\textbf{0s}}
              (-0.75,-2) node (V12) {\textbf{0s}}
              (.25,-2) node (V13) {\textbf{1s}}
              (-3.4,-3) node (V3) {\textbf{1s}}
              (-1.9,-3) node (V21) {0s}
              (-0.4,-3) node (V22) {\textbf{1s}}
              ;
         \draw[thick] (V0)-- node[shift={(-.3,.1)}] {$\frac13$} (V1)
                          --node[shift={(-.3,-.1)}] {$\frac12$}(V2)
                          --node[shift={(-.25,-.1)}] {$\frac13$} (V3)
               (V0)--node[shift={(-.2,.1)}] {$\frac12$} (V01) (V0)--node[shift={(.25,.1)}] {$\frac12$} (V02)
               (V1)--node[shift={(-.15,-.1)}] {$\frac12$} (V11) 
               (V1)--node[shift={(-.15,-.1)}] {$\frac12$} (V12) 
               (V1)--node[shift={(.55,-.1)}] {$\frac14$} (V13) (V2)--node[shift={(.6,-.1)}] {$\frac13$} (V22);
         \draw (V2)--(V21) ;
      \end{tikzpicture}
    \end{center}
    
    \caption{A subset of $(T^\zs,\omega^{\zs\to\os})$. The tree automaton in this example
    assigns $1$ to a vertex if and only if it has exactly one $1$-labeled child.
    The spine, $(V_i)_{i\geq 0}$, is on the left
    side, and vertices $V_{i,1},\ldots,V_{i,k_i}$ hang off each $V_i$.  Pivotal vertices and the edges
    between them are in bold. Alongside each edge is its weight. For example, the weight from $V_0$
    to $V_1$ is $\frac13$, because if $V_0$ and $V_1$ have their labels swapped from $\os$ to $\zs$, then
    $V_0$ has three pivotal s-labeled children.}
    \label{fig:Tspine}
  \end{figure}
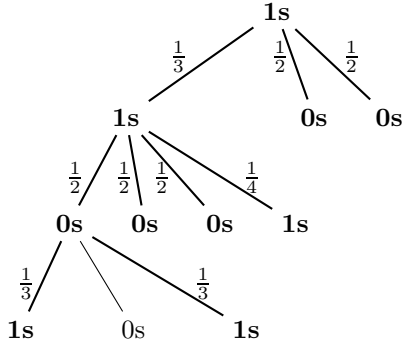

  \begin{prop}\label{prop:immigration}\ 
    \begin{enumerate}[(a)]
      \item The random variables $\bigl\{W_i,W_{i,j}\bigr\}_{i < n,\, 1\leq j\leq k_i}$ are measurable
        with respect to $\Ggg_n$. \label{item:imm.measurable}
      \item Conditional on $\Ggg$, the subtrees 
        \begin{align*}
          \Bigl\{\bigl(T^\zs(x),\omega^{\zs\to\os}_*|_{T^\zs(x)}\bigr)\colon
           \text{$x = V_{i,j}$ for some $i\geq 0$, $0\leq j\leq k_i$}\Bigr\}
        \end{align*}
          are independent. \label{item:imm.ind}
      \item For any $x=V_{i,j}$, the subtree $(T^\zs(x),\omega^{\zs\to\os}_*|_{T^\zs(x)})$
        is distributed conditional on $\Ggg$ as $\RST^\zs$ if $\omega_*^{\zs\to\os}(x)=\zs$ and as
        $\RST^\os$ if $\omega^{\zs\to\os}_*(x)=\os$.\label{item:imm.nonspine}
      \item The subtree $\bigl(T^\zs(V_n),\omega_*^{\zs\to\os}|_{T(V_n)}\bigr)$ is distributed
        conditional on $\Ggg_n$ as $\RST^{\zs\to\os}$ if $\omega_*^{\zs\to\os}(V_n)=\os$ and
        as $\RST^{\os\to\zs}$ if $\omega_*^{\zs\to\os}(V_n)=\zs$.\label{item:imm.spine}
    \end{enumerate}
  \end{prop}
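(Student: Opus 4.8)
The plan is to recognize Proposition~\ref{prop:immigration} as the analogue, for our path-switched process, of the classical spine (or immigration) decomposition of the size-biased Galton--Watson tree used by Lyons, Pemantle, and Peres. I would establish it by first proving a \emph{one-step recursion} for the measures $\RST^{\zs\to\os}$ and $\RST^{\os\to\zs}$ and then iterating it by induction on $n$, which yields all four claims simultaneously.

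For the recursion, unwind the definition of $\RST^{\zs\to\os}$: sample $(T^\zs,\omega_*^\zs)\sim\RST^\zs$, choose $V_1$ uniformly among the pivotal s-labeled children of $R_{T^\zs}$, continue this selection down to build the spine $\vec V$, and swap all $\zs/\os$ labels along $\vec V$. By Proposition~\ref{prop:(T,omega*)}\ref{i:star3}, under $\RST^\zs$ the root gives birth according to $\chicolstar^\zs$ and, conditionally on its children's $\omega_*$-labels, the subtrees hanging from them are independent, the one at a child $c$ being distributed as $\RST^{\omega_*^\zs(c)}$. Selecting $V_1$ only distinguishes one pivotal s-labeled child, and the swap affects only vertices lying on $\vec V$; hence for every child $c\neq V_1$ the subtree $(T^\zs(c),\omega_*^{\zs\to\os}|_{T^\zs(c)})$ is untouched, so these remain independent, with the one at $c$ still distributed as $\RST^{\omega_*^\zs(c)}=\RST^{\omega_*^{\zs\to\os}(c)}$ (an off-spine label being unchanged). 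The subtree at $V_1$ is exactly $\RST^{\omega_*^\zs(V_1)}$ with the $\zs/\os$ labels swapped along the pivotal s-path $V_1,V_2,\dots$ inside it, so it is distributed as $\RST^{\zs\to\os}$ if $\omega_*^\zs(V_1)=\zs$ and as $\RST^{\os\to\zs}$ if $\omega_*^\zs(V_1)=\os$; equivalently, it is $\RST^{\zs\to\os}$ when $\omega_*^{\zs\to\os}(V_1)=\os$ and $\RST^{\os\to\zs}$ when $\omega_*^{\zs\to\os}(V_1)=\zs$. The corresponding statement for $\RST^{\os\to\zs}$ follows by symmetry.

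This recursion is the inductive step for parts~\ref{item:imm.ind}, \ref{item:imm.nonspine}, and \ref{item:imm.spine}. The base case $n=0$ is trivial, since $(T^\zs(V_0),\omega_*^{\zs\to\os}|_{T^\zs(V_0)})=(T^\zs,\omega_*^{\zs\to\os})\sim\RST^{\zs\to\os}$ and $\omega_*^{\zs\to\os}(V_0)=\os$. For the step, by the inductive hypothesis $(T^\zs(V_n),\omega_*^{\zs\to\os}|_{T^\zs(V_n)})$ is, conditionally on $\Ggg_n$, distributed as $\RST^{\zs\to\os}$ or $\RST^{\os\to\zs}$ according to $\omega_*^{\zs\to\os}(V_n)$; applying the one-step recursion to it exposes $V_{n+1}$ together with the vertices $V_{n,j}$, makes the subtrees hanging from the $V_{n,j}$ conditionally independent with the laws claimed in \ref{item:imm.nonspine}, and leaves a fresh tip at $V_{n+1}$ with the law claimed in \ref{item:imm.spine}. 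Because the branching property of $\RST^\zs$ makes these newly revealed subtrees independent of everything already in $\Ggg_n$, the conditional independence assertion \ref{item:imm.ind} propagates (and one may freely pass to the coarser conditioning recorded by $\Ggg_n$, since all statements are about objects recorded by $\Ggg_n$).

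It remains to treat the measurability claim~\ref{item:imm.measurable} and to flag the main obstacle. In the construction above, the probability that $V_{i+1}$ is selected at stage $i$ is the reciprocal of the number of pivotal s-labeled children of $V_i$, and one checks directly from the definitions that this reciprocal is $W_i=w_{T^\zs,\omega_*^{\zs\to\os}}(V_i,V_{i+1})$; likewise $W_{i,j}$ is the reciprocal of the number of pivotal s-labeled children of $V_i$ after the analogous swap. The set of pivotal s-labeled children of a spine vertex is invariant under swapping $\zs/\os$ along a pivotal s-path (the same invariance of $A$-compatibility and of the s/d markings that underlies the proof of Lemma~\ref{lem:RST.rep}), so for $i<n$ this set has cardinality $k_i+1$, a quantity recorded by $\Tspine_n$; hence $W_i=(k_i+1)^{-1}$ and each $W_{i,j}$ are $\Ggg_n$-measurable. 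The main obstacle, and the point requiring the most care, is precisely this bookkeeping: one must verify that switching all $\zs/\os$ labels along the spine is genuinely local off the spine — it preserves $A$-compatibility of the colouring, it does not alter the s/d mark of any vertex (these depend only on the subtree below, which the switching does not enter beyond the path itself), and it does not change which off-spine vertices, nor which children of spine vertices, are pivotal s-labeled — so that the subtrees hanging off the spine are literally unchanged by the switching and Proposition~\ref{prop:(T,omega*)} applies to them verbatim.
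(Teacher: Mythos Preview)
Your recursive approach to parts~\ref{item:imm.ind}, \ref{item:imm.nonspine}, and \ref{item:imm.spine} is correct and essentially matches the paper's. The paper argues a touch more directly: since the switching affects only labels on the spine, for $x=V_{i,j}$ one has $(T^\zs(x),\omega_*^{\zs\to\os}|_{T^\zs(x)})=(T^\zs(x),\omega_*^\zs|_{T^\zs(x)})$, and conditioning on $\Ggg$ simply reveals a portion of the Galton--Watson tree $(T^\zs,\omega_*^\zs)$, so the unrevealed subtrees evolve independently with laws $\RST^{\omega_*^\zs(x)}$. Your inductive packaging of this same observation is fine.

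Your argument for part~\ref{item:imm.measurable}, however, has a genuine error. The claim that ``the set of pivotal s-labeled children of a spine vertex is invariant under swapping $\zs/\os$ along a pivotal s-path'' is false, and so is its consequence $W_i=(k_i+1)^{-1}$. Take the paper's own Figure~\ref{fig:Tspine} (automaton: parent is $1$ iff exactly one child is $1$). At $V_2$ the children's colours in $\omega_*^{\zs\to\os}$ are $(1,0,1)$; the middle child $V_{21}$ is \emph{not} pivotal there (swapping to $(1,1,1)$ leaves $A=0$), so $k_2=1$. But in $\omega_*^\zs$ the children's colours are $(0,0,1)$, and now all three are pivotal, giving $W_2=N(V_2,T^\zs,\omega_*^\zs)^{-1}=1/3\neq 1/2=(k_2+1)^{-1}$. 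The point is that swapping $V_{i+1}$ alters the configuration at $V_i$, and pivotality of the \emph{other} children depends on that configuration; what is invariant under the switch is only the s/d marking, not local pivotality. In particular, $V_{21}$ is pivotal in $\omega_*^\zs$ but not in $\omega_*^{\zs\to\os}$, so it does not appear among the $V_{2,j}$ in $\Tspine$ --- yet one must know of it to compute $W_2$. The paper's proof of \ref{item:imm.measurable} is the single line ``follows directly from the definition'', and the example above shows that for this to go through, $\Ggg_n$ must record the full list of children of each spine vertex $V_0,\ldots,V_{n-1}$ together with their $\omega_*$-labels, not just the pivotal s-labeled ones. With $\Ggg_n$ so enlarged, $W_i$ and $W_{i,j}$ are visibly $\Ggg_n$-measurable, and parts~\ref{item:imm.ind}--\ref{item:imm.spine} are unaffected, since the subtrees at the $V_{i,j}$ remain unrevealed below their roots.
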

  \begin{proof}
    Part~\ref{item:imm.measurable} follows directly from the definition.
    For parts~\ref{item:imm.ind} and \ref{item:imm.nonspine}, note
    that for any $x=V_{i,j}$, the subtree $(T^\zs(x),\omega^{\zs\to\os}_*|_{T^\zs(x)})$
    is identical to $(T^\zs(x),\omega^\zs_*|_{T^\zs(x)})$. We can also recharacterize
    $\Ggg$ as the $\sigma$-algebra generated
    by $\vec{V}$, $\Tspine$, and $\omega^\zs_*|_{\Tspine}$. Conditioning on $\Ggg$
    is then just revealing part of
    $(T^\zs,\omega_*^\zs)$, which is Galton--Watson by Proposition~\ref{prop:(T,omega*)}\ref{i:star3}. 
    Under this conditioning, the subtrees 
    \begin{align*}
      \bigl\{\bigl(T^\zs(x),\omega^{\zs}_*|_{T^\zs(x)}\bigr)\colon
    \text{$x = V_{i,j}$ for some $i\geq 0$, $0\leq j\leq k_i$}\bigr\}
    \end{align*}
    are unrevealed
    except for the labels of their roots, and hence they evolve independently according
    to $\RST^\zs$ or $\RST^\os$.
    
    To prove part~\ref{item:imm.spine}, we observe that conditioning on $\Ggg_n$ reveals
    $(V_0,\ldots,V_n)$, and it reveals a portion of $(T^\zs,\omega_*^\zs)$ with $V_n$ as a leaf.
    Thus $(T^\zs(V_n),\omega^\zs_*|_{T^\zs(V_n)})$ evolves either as $\RST^\zs$ or as $\RST^\os$ conditional
    on $\Ggg_n$, depending on $\omega^{\zs}_*(V_n)$. 
    Also, by its definition, $(V_i)_{i\geq n}$ conditional on $\Ggg_n$ is
    distributed as $P_{\Ww(T^\zs(V_n),\omega^\zs|_{T^\zs(V_n)})}$. Thus, $(T^\zs(V_n),\omega^{\zs\to\os}_*|_{T^\zs(V_n)})$
    is distributed conditionally on $\Ggg_n$ as stated.
  \end{proof}

  Now, we can start evaluating $\limsup_n f_n(T^\zs,\omega^{\zs\to\os}_*)$.
  First, we expand $f_n(T^\zs,\omega^{\zs\to\os}_*)$ in terms of the weights
  along and off the spine.
  \begin{lemma} \label{lem:f_n.expansion}
    We can express $f_n(T^\zs,\omega^{\zs\to\os}_*)$ as
    \begin{align}
      \begin{split}
            f_n(T^\zs,\omega^{\zs\to\os}_*) 
                &= \sum_{i=0}^{n-1} (W_0\cdots W_{i-1})
                    \sum_{j=1}^{k_i}W_{i,j}f_{n-i-1}\bigl(T^\zs(V_{i,j}),\omega^{\zs\to\os}_*|_{T^\zs(V_{i,j})}\bigr)\\
                     &\qquad\qquad+ W_0\cdots W_{n-1}
      \end{split}\nonumber\\
      \begin{split}
              &\leq \sum_{i=0}^{n-1} (W_0\cdots W_{i-1})
                    \sum_{j=1}^{k_i}W_{i,j}Cr_{n-i-1}\bigl(T^\zs(V_{i,j}),\omega^{\zs\to\os}_*|_{T^\zs(V_{i,j})}\bigr)
                      \\
                     &\qquad\qquad+ W_0\cdots W_{n-1},
      \end{split}\label{eq:f_n.expansion}
    \end{align}
    where $C$ is the constant from Lemma~\ref{lem:f_n.r_n}.
  \end{lemma}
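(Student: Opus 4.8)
The plan is to iterate the recursion~\eqref{eq:recursive} for $f_n$ down the spine $(V_i)_{i\ge 0}$, peeling off at each height $i$ the contributions coming from the off-spine pivotal s-labeled children $V_{i,1},\ldots,V_{i,k_i}$, and then to dominate each off-spine contribution by $r_{n-i-1}$ via Lemma~\ref{lem:f_n.r_n}. Note first that $(T^\zs,\omega^{\zs\to\os}_*)$ lies in $\Ts$: its root is labeled $\os$, it is compatible with $A$, and its s/d-labels are consistent. Hence~\eqref{eq:recursive} applies to it and, by the same token, to the subtree $\bigl(T^\zs(V_i),\omega^{\zs\to\os}_*|_{T^\zs(V_i)}\bigr)$ rooted at any spine vertex $V_i$.

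The one substantive point is to match the weights generated by the recursion with the $W_i$ and $W_{i,j}$ of Definitions~\ref{def:spine}. The key fact is a \emph{locality} property of the weight function: if $x$ is a vertex with child $y$ such that $(x,y)$ is an edge of some path in $\Ww(t,\tau_*)$, then $w_{t,\tau_*}(x,y)=N(x,t,\tau_*^{y})^{-1}$ depends on $(t,\tau_*)$ only through the subtree $t(x)$ with its labels. Indeed, the count $N(x,t,\tau_*^{y})$ of pivotal s-labeled children of $x$ in $(t,\tau_*^{y})$ is governed by: which children of $x$ are s-labeled, which is unchanged by the switch because $\zs$ and $\os$ are both s-labels; the colours of the children of $x$, which in $\tau_*^{y}$ differ from those in $\tau_*$ only at $y$ (the other vertices switched by $\tau_*^{y}$ are $x$ and its ancestors, none of which are children of $x$); and whether $x$ is pivotal in $(t,\tau_*^{y})$, which is the case, since $x$ lies on a path in $\Ww(t,\tau_*)$ and switching $x$ together with its ancestors does not change $x$'s pivotality. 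Since the pivotality of a child of $x$ is determined by the colours of the children of $x$ together with the pivotality of $x$, none of these ingredients sees anything strictly above $x$. It follows that the weight of the edge $V_i\to V_{i+1}$ (respectively $V_i\to V_{i,j}$) appearing in~\eqref{eq:recursive} applied to $\bigl(T^\zs(V_i),\omega^{\zs\to\os}_*|_{T^\zs(V_i)}\bigr)$ equals $W_i$ (respectively $W_{i,j}$), and that the pivotal s-labeled children of $V_i$ in that subtree are exactly $V_{i+1},V_{i,1},\ldots,V_{i,k_i}$.

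Granting this, applying~\eqref{eq:recursive} to $\bigl(T^\zs(V_i),\omega^{\zs\to\os}_*|_{T^\zs(V_i)}\bigr)$ yields
\begin{align*}
  f_{n-i}\bigl(T^\zs(V_i),\omega^{\zs\to\os}_*|_{T^\zs(V_i)}\bigr)
    &= W_i\, f_{n-i-1}\bigl(T^\zs(V_{i+1}),\omega^{\zs\to\os}_*|_{T^\zs(V_{i+1})}\bigr)\\
    &\quad + \sum_{j=1}^{k_i} W_{i,j}\, f_{n-i-1}\bigl(T^\zs(V_{i,j}),\omega^{\zs\to\os}_*|_{T^\zs(V_{i,j})}\bigr),
\end{align*}
and unrolling this for $i=0,1,\ldots,n-1$, with $f_0\equiv 1$ accounting for the leftover product of spine weights, gives the first (unnumbered) displayed equality in the lemma. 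Finally, in each off-spine summand the subtree $\bigl(T^\zs(V_{i,j}),\omega^{\zs\to\os}_*|_{T^\zs(V_{i,j})}\bigr)$ is in fact equal to $\bigl(T^\zs(V_{i,j}),\omega^{\zs}_*|_{T^\zs(V_{i,j})}\bigr)$, since the switching affects only the spine, and it lies in $\Ts$ because $V_{i,j}$ is s-labeled; so Lemma~\ref{lem:f_n.r_n} applies in the form $f_m\le C\,r_m$, and replacing each $f_{n-i-1}\bigl(T^\zs(V_{i,j}),\,\cdot\,\bigr)$ by $C\,r_{n-i-1}\bigl(T^\zs(V_{i,j}),\,\cdot\,\bigr)$ gives~\eqref{eq:f_n.expansion}. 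The main obstacle is the locality property of the weights; everything downstream of it is a mechanical unrolling of~\eqref{eq:recursive} together with a single application of Lemma~\ref{lem:f_n.r_n}.
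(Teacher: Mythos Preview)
Your proposal is correct and follows the same approach as the paper: iterate the recursion~\eqref{eq:recursive} from Lemma~\ref{lem:f_n} down the spine and then apply Lemma~\ref{lem:f_n.r_n}. The paper states this in two sentences, whereas you have carefully spelled out the locality property of the weights (why the edge weights computed in the subtree at $V_i$ agree with $W_i$ and $W_{i,j}$), which is the only nontrivial verification the paper leaves implicit.
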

  \begin{proof}
    The equality holds by successively applying \eqref{eq:recursive} from Lemma~\ref{lem:f_n},
    and the inequality is an application of Lemma~\ref{lem:f_n.r_n}.
  \end{proof}
  It is odd that we have bounded $f_n$ by $r_n$ when $f_n$ is a simpler quantity
  that we typically prefer to work with. But in the proof of Lemma~\ref{lem:X_n.finite},
  it will be easier to work with the Radon--Nikodym derivative itself rather than an approximation.
  
  Now, in Lemmas~\ref{lem:Yn.martingale}--\ref{lem:Yn.bounded},
  we prove some technical facts that help us bound \eqref{eq:f_n.expansion}.
  \begin{lemma}\label{lem:Yn.martingale}
    For any $i$ and $1\leq j\leq k_i$, the process
    \begin{align*}
      \Bigl(r_{n-i-1}\bigl(T^\zs(V_{i,j}),\omega^{\zs\to\os}_*|_{T^\zs(V_{i,j})}\bigr)\Bigr)_{n\geq i+1}
    \end{align*}
    conditional on $\Ggg$ is a nonnegative martingale in $n$ with mean one.
  \end{lemma}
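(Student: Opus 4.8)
The plan is to deduce the statement from two ingredients: the standard fact that Radon--Nikodym derivatives taken along an increasing filtration form a nonnegative martingale of mean one, and Proposition~\ref{prop:immigration}\ref{item:imm.nonspine}, which identifies the conditional law given $\Ggg$ of the subtree hanging off $V_{i,j}$ as either $\RST^\zs$ or $\RST^\os$.

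First I would record the general principle. Let $\mu$ and $\nu$ be probability measures on a space carrying a filtration $(\Ff_m)_{m\geq0}$ with $\Ff_0$ trivial and with $\sigma(\bigcup_m\Ff_m)$ equal to the full $\sigma$-algebra, and suppose that for each $m$ we have $\mu|_{\Ff_m}\ll\nu|_{\Ff_m}$ with Radon--Nikodym derivative $X_m$. Then $(X_m)_{m\geq0}$ is a nonnegative $\nu$-martingale for $(\Ff_m)_{m\geq0}$ with $X_0=1$ and $\E_\nu X_m=1$ for every $m$: for $A\in\Ff_m\subseteq\Ff_{m+1}$ we have $\int_A X_{m+1}\,d\nu=\mu(A)=\int_A X_m\,d\nu$, so $\E_\nu[X_{m+1}\mid\Ff_m]=X_m$, while $\E_\nu X_m=\mu(\text{all})=1$ and $X_0=1$ since $\Ff_0$ is trivial and $\mu,\nu$ are probability measures. (This is the martingale implicit in Lemma~\ref{lem:dichotomy}; see also \cite[Chapter~12]{LP}.)

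I would apply this with the filtration $(\Fff_m)_{m\geq0}$ on $\Ts$. As noted after \eqref{eq:r_n.def.2}, for $(t,\tau_*)\in\Ts$ with $\tau_*(R_t)=\zs$ the function $r_m(\cdot)$ of \eqref{eq:r_n.def.1} is the Radon--Nikodym derivative of $\RST^{\os\to\zs}|_{\Fff_m}$ with respect to $\RST^{\zs}|_{\Fff_m}$, and for $\tau_*(R_t)=\os$ it is the Radon--Nikodym derivative of $\RST^{\zs\to\os}|_{\Fff_m}$ with respect to $\RST^{\os}|_{\Fff_m}$; the required absolute continuity of the restricted measures is immediate from Lemmas~\ref{lem:RST.rep} and \ref{lem:f_n.r_n}, since an atom $[t,\tau_*]_m$ of zero mass under the dominating measure also has zero mass under the dominated one. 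By the general principle, $\bigl(r_m(X)\bigr)_{m\geq0}$ is a nonnegative martingale of mean one whenever $X\sim\RST^\zs$, and likewise whenever $X\sim\RST^\os$.

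Finally, by Proposition~\ref{prop:immigration}\ref{item:imm.nonspine}, conditional on $\Ggg$ the subtree $\bigl(T^\zs(V_{i,j}),\omega^{\zs\to\os}_*|_{T^\zs(V_{i,j})}\bigr)$ has law $\RST^\zs$ on the ($\Ggg$-measurable) event $\{\omega^{\zs\to\os}_*(V_{i,j})=\zs\}$ and law $\RST^\os$ on its complement. Substituting this random subtree into the martingale of the previous paragraph and reindexing by $m=n-i-1$, so that $m$ ranges over $\{0,1,2,\dots\}$ as $n$ ranges over $\{i+1,i+2,\dots\}$, we obtain that conditional on $\Ggg$ the process $\bigl(r_{n-i-1}(T^\zs(V_{i,j}),\omega^{\zs\to\os}_*|_{T^\zs(V_{i,j})})\bigr)_{n\geq i+1}$ is a nonnegative martingale in $n$ with mean one, as claimed. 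The only point that takes any thought is checking that each $r_m$ is an honest Radon--Nikodym derivative of the restricted measures rather than merely a ratio of atomic masses, but this absolute continuity is supplied by Lemmas~\ref{lem:RST.rep} and \ref{lem:f_n.r_n} (and is already asserted after \eqref{eq:r_n.def.2}); the rest is bookkeeping around the standard Radon--Nikodym martingale together with a direct citation of Proposition~\ref{prop:immigration}.
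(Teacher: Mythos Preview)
Your proof is correct and follows essentially the same route as the paper: invoke Proposition~\ref{prop:immigration}\ref{item:imm.nonspine} to identify the conditional law of the subtree as $\RST^\zs$ or $\RST^\os$, then use the standard fact that a Radon--Nikodym derivative along a filtration, evaluated under the dominating measure, is a mean-one nonnegative martingale. The paper's version is terser---it simply cites \cite[Lemma~5.3.4]{Durrett} for the martingale property---whereas you take the extra care of verifying absolute continuity of the restricted measures via Lemmas~\ref{lem:RST.rep} and \ref{lem:f_n.r_n}, which is a welcome bit of rigor but not a different idea.
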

  \begin{proof}
    By definition of $r_n$, given in \eqref{eq:r_n.def.1}--\eqref{eq:r_n.def.2}, the process
    is nonnegative.
    By Proposition~\ref{prop:immigration}\ref{item:imm.nonspine},
    the conditional distribution of
    \begin{align*}
      \Bigl(T^\zs(V_{i,j}),\;\omega^{\zs\to\os}_*|_{T^\zs(V_{i,j})}\Bigr)
    \end{align*}
    given $\Ggg$ is either $\RST^\zs$ or $\RST^\os$, depending on the value of $\omega^{\zs\to\os}_*(V_{i,j})$.
    For the sake of concreteness, suppose that $\omega^{\zs\to\os}_*(V_{i,j})=\zs$ so that
    its conditional distribution is $\RST^\zs$.
    Then $r_{n-i-1}\bigl(T^\zs(V_{i,j}),\omega^{\zs\to\os}_*|_{T^\zs(V_{i,j})}\bigr)$ conditional on $\Ggg$ is
    the Radon-Nikodym derivative of
    $\RST^{\os\to\zs}|_{\Gg_n}$ with respect to $\RST^{\zs}|_{\Gg_n}$, applied to an $\RST^\zs$-distributed
    random variable. Hence, conditional on $\Ggg$,
    it is a martingale in $n$ \cite[Lemma~5.3.4]{Durrett}. The same logic 
    shows that $r_{n-i-1}\bigl(T^\zs(V_{i,j}),\omega^{\zs\to\os}_*|_{T^\zs(V_{i,j})}\bigr)$ is a martingale
    conditional on $\Ggg$ when $\omega^{\zs\to\os}_*(V_{i,j})=\os$. The initial value of
    either martingale, when $n=i+1$, is $1$.
  \end{proof}

  \begin{lemma}\label{lem:two.step}
    For some $c<1$, it holds for all $n\geq 0$ that
    \begin{align*}
      \E[ W_nW_{n+1}\mid\Ggg_n] \leq c \text{ a.s.}
    \end{align*}
  \end{lemma}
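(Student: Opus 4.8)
The plan is to reduce Lemma~\ref{lem:two.step} to an unconditional estimate on the two measures $\RST^{\zs\to\os}$ and $\RST^{\os\to\zs}$, and then to bound $\E[W_0W_1]$ for each of them. The reduction is via the immigration property: by Proposition~\ref{prop:immigration}\ref{item:imm.spine}, conditional on $\Ggg_n$ the labeled subtree $\bigl(T^\zs(V_n),\omega^{\zs\to\os}_*|_{T^\zs(V_n)}\bigr)$ is distributed as $\RST^{\zs\to\os}$ if $\omega_*^{\zs\to\os}(V_n)=\os$ and as $\RST^{\os\to\zs}$ if $\omega_*^{\zs\to\os}(V_n)=\zs$; and each of $W_n$ and $W_{n+1}$ is a measurable function of that subtree, since $W_i$ is the reciprocal of a count of pivotal $s$-labeled children of $V_i$ and such a count depends only on $V_i$ together with its children. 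Hence $\E[W_nW_{n+1}\mid\Ggg_n]$ is always one of the two numbers $\E_{\RST^{\zs\to\os}}[W_0W_1]$ and $\E_{\RST^{\os\to\zs}}[W_0W_1]$, and it suffices to show both are strictly below $1$ and take $c$ to be their maximum.

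Next I would use the elementary observation that $W_0,W_1\in(0,1]$, so $W_0W_1\le 1$ with equality exactly when $W_0=W_1=1$. Unwinding the definitions as in the discussion of $f_n$ (and as in the caption of Figure~\ref{fig:Tspine}), under $\RST^{\zs\to\os}$ the quantity $W_0$ is the reciprocal of the number $K$ of pivotal $s$-labeled children of the root of the underlying, pre-switching $\RST^\zs$-tree. Thus $\E_{\RST^{\zs\to\os}}[W_0W_1]<1$ whenever $W_0<1$ with positive probability, i.e.\ whenever the root of an $\RST^\zs$-distributed tree has at least two pivotal $s$-labeled children with positive probability; and symmetrically with $\RST^\os$ in place of $\RST^\zs$ for $\RST^{\os\to\zs}$. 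So in the generic situation where, for both $\ell\in\{\zs,\os\}$, an $\RST^\ell$-distributed tree's root has two or more pivotal $s$-labeled children with positive probability, the lemma follows at once.

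The work is in the degenerate situation where, for some $\ell$, an $\RST^\ell$-distributed tree's root has exactly one pivotal $s$-labeled child almost surely. Then $W_0=1$ a.s.\ under the corresponding switched measure, so the inequality $\E[W_0W_1]<1$ holds unless also $W_1=1$ a.s.; and by the immigration property, $W_1=1$ a.s.\ forces the root of a fresh $\RST^{\ell'}$-tree---with $\ell'$ the $\omega_*$-label of the first spine vertex---again to have exactly one pivotal $s$-labeled child. Iterating this, every $s$-labeled pivotal vertex almost surely has exactly one pivotal $s$-labeled child, so on the event that $\Tpiv$ is infinite its $s$-labeled part is a single infinite path. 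I would then contradict supercriticality: by Proposition~\ref{prop:2state.regularity}\ref{i:gen.size} together with supercriticality, $\E[\ell_n(\Tpiv)]=\E[\ell_1(\Tpiv)]^n$ grows exponentially, whereas $\Tpiv$ now decomposes into a single path contributing at most one vertex to each generation plus an almost surely finite---hence subcritical or critical---multitype Galton--Watson forest of finite bushes hanging off that path, which forces $\E[\ell_n(\Tpiv)]$ to grow at most polynomially. Alternatively, one can argue spectrally: applying the mean-matrix symmetry of Lemma~\ref{lem:growth.rates} to the $s$-labeled pivot tree (weighting the colour-swapping bijection by the survival probabilities, i.e.\ by $\vec\nu_*$ rather than $\vec\nu$), the hypothesis that one type produces exactly one pivotal $s$-labeled child forces the $s$-pivot-tree mean matrix to have spectral radius at most $1$, so $\Tpiv$ is almost surely finite, contradicting Proposition~\ref{prop:2state.regularity}\ref{i:supercrit}.

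The main obstacle is precisely this degenerate case: ruling out that a supercritical pivot tree has an essentially one-dimensional $s$-labeled part. The cleanest route seems to be the spectral one, which requires extending Lemma~\ref{lem:growth.rates} to the $s$-pivot tree and keeping careful track of the fact that a single colour-swap at a level-$n$ vertex changes the survival probability of its subtree by a factor $\rho(0)/\rho(1)$ or its reciprocal.
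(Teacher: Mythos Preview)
Your reduction via Proposition~\ref{prop:immigration}\ref{item:imm.spine} and your handling of the generic case match the paper. The gap is in the degenerate case. The iteration does not establish that \emph{every} $s$-labeled pivotal vertex has exactly one pivotal $s$-child: if $c_\zs=1$ but the unique pivotal $s$-child of a $\zs$-vertex is almost surely again of type~$\zs$, the spine never visits type~$\os$ and you learn nothing about $c_\os$. In that sub-case your growth-rate contradiction only controls $\E[\ell_n(\Tpiv)\mid\omega_*(R_T)=\zs]$, not the unconditional expectation, so the $\os$-rooted part could still grow exponentially. Your spectral alternative also misfires as stated: from $c_\zs=1$ alone, the $\zs$-row of the $s$-pivot mean matrix sums to~$1$, but by the $\vec\nu_*$-weighted analogue of Lemma~\ref{lem:growth.rates} the spectral radius equals that row sum with the off-diagonal entry reweighted by $\vec\nu_*(\zs)/\vec\nu_*(\os)$, which can exceed~$1$.

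The paper's key extra ingredient is Proposition~\ref{prop:(T,omega*)}\ref{i:star2}: conditional on the children's colours, their $s$/$d$ labels are assigned independently with positive probabilities $\rho(0),\rho(1)$. Hence ``a $\zs$-vertex has exactly one pivotal $s$-child a.s.''\ forces ``a colour-$0$ vertex has at most one pivotal child a.s.''\ (two pivotal children would both be $s$-labeled with positive probability), giving $m_{00}+m_{01}\le 1$ for the matrix $M$ of Lemma~\ref{lem:growth.rates}. Now either $m_{01}>0$, so a $\zs$-vertex sometimes has a pivotal $\os$-child, and combined with $c_\os<1$ (proved by the same label-independence trick, since $c_\zs=c_\os=1$ would make $M$ have spectral radius at most~$1$) one computes $\E[W_nW_{n+1}\mid\Ggg_n]\le 1-p+pc_\os<1$; or $m_{01}=0$, whence by Lemma~\ref{lem:growth.rates} the matrix $M$ is diagonal with entries at most~$1$, contradicting supercriticality. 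Your spectral route can be repaired by noting the similarity $M^s=D^{-1}MD$ with $D=\operatorname{diag}(\rho(0),\rho(1))$, so that the $s$-pivot tree shares $M$'s spectral radius, but the resulting case split is essentially the paper's and the label-independence reduction is what you are missing.
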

  \begin{proof}
    First, we claim that $W_n$ is the reciprocal of the number of pivotal s-labeled children
    of $V_n$ in the original unswitched tree $(T^\zs,\omega_*^\zs)$; that is,
    \begin{align}\label{eq:W_n.expression}
      W_n=N(V_n,T^\zs,\omega_*^\zs)^{-1}.
    \end{align}
    Indeed, by the definition of $W_n$ in Definitions~\ref{def:spine} and then the definition
    of $w_{t,\tau_*}(x,y)$ in Definitions~\ref{def:N,w},
    \begin{align*}
      W_n &=w_{T^\zs,\omega_*^{\zs\to\os}}(V_i,V_{i+1})
          = N\bigl(V_n,T^\zs,(\omega_*^{\zs\to\os})^{V_{n+1}}\bigr)^{-1}.
    \end{align*}
    The labels $(\omega_*^{\zs\to\os})^{V_{n+1}}$ consist of the original labels $\omega_*^\zs$
    switched along the spine and then switched back again, yielding \eqref{eq:W_n.expression}.

    We now seek to analyze this expression conditional on $\Ggg_n$.
    By Proposition~\ref{prop:immigration}\ref{item:imm.spine},
    the distribution of $\bigl(T^\zs(V_n),\omega_*^{\zs\to\os}|_{T(V_n)}\bigr)$ conditional
    on $\Ggg_n$ is either $\RST^{\os\to\zs}$ or $\RST^{\zs\to\os}$, depending on
    $\omega_*^{\zs}(V_n)$. Equivalently, the distribution of
    $\bigl(T^\zs(V_n),\omega_*^\zs|_{T(V_n)}\bigr)$ conditional on $\Ggg_n$ is
    $\RST^\ell$ where $\ell=\omega_*^\zs(V_n)$, which is measurable with respect to $\Ggg_n$.
    Hence, $N(V_n,T^\zs,\omega_*^\zs)$ conditional on $\Ggg_n$ is distributed
    as $N(R_{T^\ell},T^\ell,\omega_*^\ell)$, where $\ell=\omega_*^\zs(V_n)$.

    Let $c_\ell = \E N(R_{T^\ell},T^\ell,\omega_*^\ell)^{-1}$ for $\ell=\zs,\os$.
    Recall that $N(R_{T^\ell},T^\ell,\omega_*^\ell)\geq 1$, since a pivotal s-labeled vertex must
    give birth to another pivotal s-labeled vertex.
    Hence, we have $c_\ell=1$ if and only if
    $N(R_{T^\ell},T^\ell,\omega_*^\ell)=1$ a.s.
    If $c_\zs<1$ and $c_\os<1$, then set $c=\max(c_\zs,c_\os)$ and use the easy bound $W_{n+1}\leq 1$
    to get
    \begin{align*}
      \E [W_nW_{n+1}\mid\Ggg_n]\leq c \text{ a.s.}
    \end{align*}
    
    The troublesome case is when $c_\zs=1$ or $c_\os=1$.
    Suppose $c_\zs=1$. If $c_\os=1$, the proof is identical
    with the roles of $\zs$ and $\os$ switched.
    The argument has two steps:
    \begin{enumerate}[(a)]
      \item $c_\os<1$; \label{i:two.step.1}
      \item a vertex of type $\zs$ gives birth to a pivotal $\os$-labeled vertex with positive probability.
        \label{i:two.step.2}
    \end{enumerate}
    Suppose that \ref{i:two.step.1} is false. Then $c_\zs=c_\os=1$, and consequently
    no vertex type in $(T,\omega_*)$
    ever gives birth to more than one $s$-labeled pivotal child. This implies that 
    no vertex gives birth to more than one pivotal child. Indeed, according
    to Proposition~\ref{prop:(T,omega*)}\ref{i:star2}, given the colours
    of the children of a vertex, their s- and d-labels are assigned independently. Thus, if a vertex
    of colour $0$ or $1$ had positive probability of having multiple pivotal children, 
    it would also have positive
    probability of having multiple s-labeled pivotal children.
    Since vertices of either colour give birth to at most one pivotal child,
    the highest eigenvalue of the matrix of means
    of $(\Tpiv,\omega)$ is at most one. But this contradicts our assumption throughout this section that
    the pivot tree is supercritical, establishing \ref{i:two.step.1}.
    
    For \ref{i:two.step.2}, from $c_\zs=1$ we know that a vertex of type~$\zs$
    always gives birth to exactly one pivotal s-labeled child. 
    As above, this implies that it always gives birth to exactly one pivotal child.
    In fact, a vertex of type~$0$ must always give birth to zero or one pivotal children,
    since if it had positive probability of giving birth to two or more, then it would
    have positive probability of giving birth to two or more pivotal s-labeled vertices, 
    and so a vertex of type~$\zs$ would have positive probability of
    giving birth to two or more pivotal children. Hence $m_{00}+m_{01}\leq 1$,
    in the language of Lemma~\ref{lem:growth.rates}.
    
    Suppose that \ref{i:two.step.2} is false and a vertex of type~$\zs$ never gives
    birth to a pivotal $\os$-labeled vertex. Then, a vertex of type~$0$ never gives birth
    to a pivotal vertex of type~$1$ (if it had positive probability of doing so,
    then a vertex of type~$\zs$ would have positive probability of giving birth
    to a pivotal vertex of type~$\os$). Hence, $m_{01}=0$. By Lemma~\ref{lem:growth.rates},
    the matrix of means for $\Tpiv$ has the form $\bigl[\begin{smallmatrix}m_{00}&0\\0&m_{00}
    \end{smallmatrix}\bigr]$ where $m_{00}\leq 1$. But this contradicts the supercriticality
    of $\Tpiv$, proving \ref{i:two.step.2}.
    
    Now, we are ready to evaluate $\E [W_nW_{n+1}\mid\Ggg_n]$ when $c_\zs=1$.
    Let $p$ be the probability that a vertex of type~$\zs$ has a pivotal child of type~$\os$,
    which we know to be positive by \ref{i:two.step.2}.
    If $\omega_*^{\zs}(V_n)=\zs$, then
    \begin{align*}
      \E [W_nW_{n+1}\mid\Ggg_n] &= 1-p + pc_\os<1.
    \end{align*}
    If $\omega_*^{\zs}(V_n)=\os$, then using the bound $W_{n+1}\leq 1$, we have
    \begin{align*}
      \E [W_nW_{n+1}\mid\Ggg_n] &\leq c_\os<1.
    \end{align*}
    We then take $c$ as the maximum of these two values to complete the proof.
  \end{proof}
  
  We mention that the difficult case in this proof, where one of $c_\zs$ and $c_\os$ is
  zero, can truly occur. 
  For example, let $A$ be the two-state automaton assigning
  colour~$1$ to a parent if and only if the parent has at least three children,
  all of which have the same colour. If a vertex has colour~$0$, then
  it has a pivotal child only when it has three or more children and all but one of them
  are the same colour, in which case it has exactly one pivotal child (the odd-coloured one).
  Thus, conditional on being type~$\zs$, a vertex has exactly one pivotal s-labeled 
  child, which makes $c_\zs=1$.
  
  The following lemma is well known, though
  it is most commonly stated with its converse under the additional
  assumption that $X_1,X_2,\ldots$ are independent (see \cite[Exercise~12.2]{LP}).
  We sketch the proof here.
  \begin{lemma}\label{lem:log.moment.bound}
    Let $X_1,\,X_2,\ldots$ be nonnegative random variables with a common distribution. 
    If this distribution has finite log-moment, then
    \begin{align*}
      \sum_{n=1}^{\infty} c^n X_n < \infty \text{ a.s.}
    \end{align*}
    for all $c\in(0,1)$.
  \end{lemma}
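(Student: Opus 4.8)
The plan is to run a first Borel--Cantelli argument, exploiting the standard equivalence between finiteness of a logarithmic moment and summability of a geometrically-spaced tail series. Fix $c\in(0,1)$ and pick $a$ with $1<a<1/c$, so that $ca<1$; put $\delta=\log a>0$. Let $Y_n=\log^+ X_n$, all distributed as $Y=\log^+ X_1$, and recall that the finite log-moment hypothesis says $\E Y<\infty$. First I would record the elementary inequality, valid because $t\mapsto\P[Y>t]$ is nonincreasing,
\begin{align*}
  \E Y = \int_0^\infty\P[Y>t]\,dt \;\geq\; \sum_{n=1}^\infty\int_{(n-1)\delta}^{n\delta}\P[Y>t]\,dt \;\geq\; \delta\sum_{n=1}^\infty\P[Y>n\delta],
\end{align*}
so that $\sum_{n\geq 1}\P[X_n>a^n]=\sum_{n\geq 1}\P[Y>n\delta]\leq\delta^{-1}\E Y<\infty$.

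By the first Borel--Cantelli lemma, almost surely only finitely many of the events $\{X_n>a^n\}$ occur; equivalently, there is an almost surely finite random index $N$ with $X_n\leq a^n$ for every $n\geq N$. On this event,
\begin{align*}
  \sum_{n=1}^\infty c^n X_n \;\leq\; \sum_{n=1}^{N-1}c^n X_n + \sum_{n\geq N}(ca)^n \;<\;\infty,
\end{align*}
because the first sum is a finite sum of almost surely finite terms and $ca<1$ makes the second converge. Since this bound holds on an almost sure event and $c\in(0,1)$ was arbitrary, the conclusion follows.

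I do not expect a genuine obstacle in this argument; the only substantive point is the displayed tail estimate, which is exactly the reason the hypothesis is phrased in terms of a logarithmic moment. One could alternatively cite this as the easy half of \cite[Exercise~12.2]{LP}, as already noted in the statement, but the self-contained two-line proof above is shorter than invoking the reference.
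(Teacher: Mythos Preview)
Your proof is correct and follows essentially the same approach as the paper: both use the first Borel--Cantelli lemma to show that $X_n\leq a^n$ eventually for an appropriate $a\in(1,1/c)$, after which the series is dominated by a convergent geometric series. Your version is simply a fleshed-out form of the paper's two-line sketch, with the tail-sum inequality made explicit.
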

  \begin{proof}
    Apply the Borel--Cantelli lemma to show that $\limsup_{n\to\infty}\frac1n\log X_n=0$
    a.s. Then it follows for some finite random $N$ that $X_n\leq (2c)^{-n}$ for $n\geq N$.
  \end{proof}

  \begin{lemma}\label{lem:Yn.bounded}
    If the child distribution $\chi$ has finite log-moment, then
    \begin{align*}
       \sum_{i=0}^{\infty}W_0\cdots W_{i-1}
                 N\bigl(V_i, T^\zs, \omega^{\zs\to\os}_*\bigr) < \infty \text{ a.s.}
    \end{align*}
  \end{lemma}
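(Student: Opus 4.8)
The plan is to decouple the two factors in the summand $W_0\cdots W_{i-1}\,N(V_i,T^\zs,\omega^{\zs\to\os}_*)$: the product $W_0\cdots W_{i-1}$ decays geometrically in expectation by Lemma~\ref{lem:two.step}, while $N(V_i,T^\zs,\omega^{\zs\to\os}_*)\le d_i\defeq\abs{C(V_i)}$, the number of children of $V_i$ in $T^\zs$, is light-tailed enough under the log-moment hypothesis. For the tail of $d_i$, I would observe that conditionally on $\Ggg_i$, Proposition~\ref{prop:immigration}\ref{item:imm.spine} (equivalently, the statement in the proof of Lemma~\ref{lem:two.step}) gives that $(T^\zs(V_i),\omega_*^\zs|_{T^\zs(V_i)})$ is distributed as $\RST^\zs$ or $\RST^\os$, and since $\Ggg_i$ exposes nothing about the children of $V_i$ (which live at level $i+1$), the conditional law of $d_i$ is the root-degree law under $\RST^\zs$ or $\RST^\os$. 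As these are $\GW(\chi)$ conditioned on the events $\{\omega_*(R_T)=\zs\}$ and $\{\omega_*(R_T)=\os\}$, which have positive probability by Proposition~\ref{prop:2state.regularity}\ref{i:supercrit}, the conditional law of $d_i$ has density at most $B\defeq 1/\min\{\P[\omega_*(R_T)=\zs],\P[\omega_*(R_T)=\os]\}$ with respect to $\chi$, uniformly in $i$; taking expectations over $\Ggg_i$, the unconditional law also satisfies $\P[d_i>m]\le B\,\P[D>m]$ for $D\sim\chi$.

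Now fix $c<1$ as in Lemma~\ref{lem:two.step} and pick $\epsilon\in(0,\tfrac12\log(1/c))$. Then $\sum_{i\ge1}\P[d_i>e^{\epsilon i}]\le B\sum_{i\ge1}\P[\log D>\epsilon i]\le(B/\epsilon)\,\E[\log^+D]<\infty$ by the log-moment hypothesis on $\chi$, so Borel--Cantelli gives an a.s.\ finite $I_0$ with $d_i\le e^{\epsilon i}$ for all $i\ge I_0$. Hence $\sum_{i\ge I_0}W_0\cdots W_{i-1}N(V_i,T^\zs,\omega^{\zs\to\os}_*)\le\sum_{i\ge0}W_0\cdots W_{i-1}e^{\epsilon i}$, and since the first $I_0$ terms of the series in the lemma are each finite, it suffices to prove $\sum_{i\ge0}W_0\cdots W_{i-1}e^{\epsilon i}<\infty$ a.s. Here I would use $W_j\le1$ (by $W_j=N(V_j,T^\zs,\omega_*^\zs)^{-1}$ and the fact that every pivotal s-labeled vertex has at least one pivotal s-labeled child) together with the $\Ggg_{2k}$-measurability of $W_0,\dots,W_{2k-1}$ from Proposition~\ref{prop:immigration}\ref{item:imm.measurable}; iterating the bound $\E[W_{2k}W_{2k+1}\mid\Ggg_{2k}]\le c$ of Lemma~\ref{lem:two.step} then yields $\E[W_0\cdots W_{i-1}]\le c^{\lfloor i/2\rfloor}$. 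Therefore $\E\sum_{i\ge0}W_0\cdots W_{i-1}e^{\epsilon i}\le\sum_{i\ge0}c^{\lfloor i/2\rfloor}e^{\epsilon i}\le c^{-1/2}\sum_{i\ge0}(c^{1/2}e^{\epsilon})^i<\infty$ by the choice of $\epsilon$, which forces the series to be a.s.\ finite.

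The one genuine difficulty is that $N(V_i,T^\zs,\omega^{\zs\to\os}_*)$, being comparable to $d_i$, can have infinite mean when $\chi$ does, so one cannot simply sum the expectations $\E[W_0\cdots W_{i-1}N(V_i,T^\zs,\omega^{\zs\to\os}_*)]$; the argument must instead control $N(V_i,T^\zs,\omega^{\zs\to\os}_*)$ pathwise by $e^{\epsilon i}$ (valid for all large $i$, by Borel--Cantelli, thanks to the log-moment assumption) and let the geometric decay of $\E[W_0\cdots W_{i-1}]$ absorb it. A minor point to handle carefully is the measurability bookkeeping: $d_i$ itself is not $\Ggg_i$-measurable, but conditioning on $\Ggg_i$ is exactly what exposes the fresh subtree hanging from $V_i$, which is all that the tail estimate needs.
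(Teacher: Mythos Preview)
Your proof is correct and uses the same ingredients as the paper's: the bound $\E[W_0\cdots W_{i-1}]\le c^{\lfloor i/2\rfloor}$ from Lemma~\ref{lem:two.step}, the fact that the degree of $V_i$ is stochastically dominated by a fixed distribution with finite log-moment (coming from $\chi$ conditioned on a positive-probability event), and Borel--Cantelli. The one structural difference is which factor you apply Borel--Cantelli to. The paper applies Markov plus Borel--Cantelli to the products $W_0\cdots W_{i-1}$, obtaining $W_0\cdots W_{i-1}\le b^i$ eventually for some $b\in(\sqrt c,1)$, and then invokes the log-moment lemma (Lemma~\ref{lem:log.moment.bound}) to conclude $\sum_i b^i N(V_i,\ldots)<\infty$ a.s. You instead apply Borel--Cantelli to the degrees $d_i$, obtaining $d_i\le e^{\epsilon i}$ eventually, and then show $\E\sum_i W_0\cdots W_{i-1}e^{\epsilon i}<\infty$ directly. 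These are mirror images of one another; neither buys anything the other does not, and both correctly avoid the trap you identify of trying to sum $\E[W_0\cdots W_{i-1}N(V_i,\ldots)]$ when $\chi$ may have infinite mean.
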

  \begin{proof}
    First, we claim that
    \begin{align*}
      \E[W_0\cdots W_{i-1}] &\leq c^{\lfloor i/2 \rfloor}
    \end{align*}
    for some $c<1$.
    This is proven by applying
    Lemma~\ref{lem:two.step} to take conditional expectations given $\Ggg_{i-2}$,
    then given $\Ggg_{i-4}$, and so on. Choosing any $b\in(\sqrt{c},1)$ and applying Markov's inequality,
    \begin{align*}
      \sum_{i=1}^{\infty}\P[W_0\cdots W_{i-1} > b^i] \leq \sum_{i=1}^{\infty}\frac{c^{\lfloor i/2 \rfloor}}{b^i}
        <\infty.
    \end{align*}
    By the Borel--Cantelli lemma, it holds almost surely that $W_0\cdots W_{i-1}\leq b^i$
    for all but finitely many values of $i$. Hence, it suffices to show that
    \begin{align}\label{eq:Y_n.modified.bound}
      \sum_{i=0}^{\infty} b^iN(V_i,T^\zs,\omega^{\zs\to\os}_*)<\infty \text{ a.s.}
    \end{align}

    Now, it remains to apply Lemma~\ref{lem:log.moment.bound}.
    Since $N(V_i,T^\zs,\omega^{\zs\to\os}_*)$ is the number of s-pivotal offspring of $V_i$
    in $(T^\zs,\omega^{\zs\to\os}_*)$, it is bounded by the total number of offspring of $V_i$
    in $T^\zs$. 
        By Proposition~\ref{prop:immigration}\ref{item:imm.nonspine}, the distribution
    of $\bigl(T(V_i),\omega_*^{\zs\to\os}|_{T(V_i)}\bigr)$ conditional on $\Ggg$ is either
    $\RST^\zs$ or $\RST^\os$. 
    Thus, conditional on $\Ggg$, the random variable $N(V_i,T^\zs,\omega^{\zs\to\os}_*)$
    is stochastically dominated by the number of vertices at level~$1$ of
    either $T^\zs$ or $T^\os$. Let $A_\zs$ and $A_\os$ be random variables with these distributions,
    respectively.
    Since $\chi$ is assumed to have finite log-moment,
    both $A_\zs$ and $A_\os$ have finite log-moment as well.    
    Now, let $X_i$ have any distribution that stochastically dominates $A_\zs$
    and $A_\os$ and has finite log-moment. For example, one could take $X_i=A_\zs+A_\os$
    where $A_\zs$ and $A_\os$ are independent. Thus, $N(V_i,T^\zs,\omega^{\zs\to\os}_*)$
    conditional on $\Ggg$ is stochastically dominated by $X_i$, and so there
    exists a coupling in which $N(V_i,T^\zs,\omega^{\zs\to\os}_*)\leq X_i$ for all $i$. 
    (Note that we 
    do not care about the joint distribution of $X_1,X_2,\ldots$.) Under this coupling,
    \begin{align*}
      \sum_{i=0}^{\infty} b^iN(V_i,T^\zs,\omega^{\zs\to\os}_*)\leq \sum_{i=0}^{\infty} b^iX_i,
    \end{align*}
    which is almost surely finite by Lemma~\ref{lem:log.moment.bound}.
    This proves \eqref{eq:Y_n.modified.bound}, from which the lemma follows.
  \end{proof}
  
  Finally, we are ready to achieve what we have been building towards by bounding 
  the right-hand side of \eqref{eq:f_n.expansion}.

  \begin{lemma}\label{lem:X_n.finite}
    Assume that the child distribution $\chi$ has finite log-moment.
    Then
    \begin{align*}
      \limsup_{n\to\infty} f_n(T^\zs,\omega^{\zs\to\os}_*)<\infty \text{ a.s.}
    \end{align*}
  \end{lemma}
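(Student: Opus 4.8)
The plan is to start from the bound already established in Lemma~\ref{lem:f_n.expansion},
\begin{align*}
  f_n(T^\zs,\omega^{\zs\to\os}_*) \leq C\sum_{i=0}^{n-1}(W_0\cdots W_{i-1})\sum_{j=1}^{k_i}W_{i,j}\,
       r_{n-i-1}\bigl(T^\zs(V_{i,j}),\omega^{\zs\to\os}_*|_{T^\zs(V_{i,j})}\bigr) + W_0\cdots W_{n-1},
\end{align*}
and to bound the $\limsup$ of the two terms on the right separately. Write $S_n$ for the double sum (without the factor $C$, the constant from Lemma~\ref{lem:f_n.r_n}). The tail term is the easy one: from the estimate $\E[W_0\cdots W_{i-1}]\leq c^{\lfloor i/2\rfloor}$ derived inside the proof of Lemma~\ref{lem:Yn.bounded}, Markov's inequality and Borel--Cantelli give that almost surely $W_0\cdots W_{i-1}\leq b^i$ for all large $i$ (any fixed $b\in(\sqrt c,1)$), so $W_0\cdots W_{n-1}\to 0$ a.s. Everything then reduces to showing $\limsup_n S_n<\infty$ a.s.

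For $S_n$, the idea is to recognise it as a weighted sum of independent nonnegative martingales and run a submartingale convergence argument conditionally on the spine data. I would work under a regular conditional probability given $\Ggg$ (available since the underlying spaces are Polish). By Proposition~\ref{prop:immigration}\ref{item:imm.measurable} all the weights $W_i$, $W_{i,j}$ and the counts $k_i$ are $\Ggg$-measurable, and by Proposition~\ref{prop:immigration}\ref{item:imm.ind}--\ref{item:imm.nonspine} the off-spine subtrees $\bigl(T^\zs(V_{i,j}),\omega^{\zs\to\os}_*|_{T^\zs(V_{i,j})}\bigr)$ are, conditionally on $\Ggg$, independent and each distributed as $\RST^\zs$ or $\RST^\os$. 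Hence by Lemma~\ref{lem:Yn.martingale}, for each fixed $(i,j)$ the process $n\mapsto r_{n-i-1}\bigl(T^\zs(V_{i,j}),\ldots\bigr)$ is a nonnegative mean-one martingale with respect to the filtration $\mathcal{H}_n$ generated by $\Ggg$ and the first $n$ levels of all these subtrees, and $S_n$ is $\mathcal{H}_n$-measurable. Passing from $S_n$ to $S_{n+1}$ adds the new $i=n$ term (whose $r$-factor is $r_0=1$) and advances each old martingale one step, so $\E[S_{n+1}\mid\mathcal{H}_n]=S_n+(W_0\cdots W_{n-1})\sum_{j=1}^{k_n}W_{n,j}\geq S_n$; thus $(S_n)$ is a nonnegative $(\mathcal{H}_n)$-submartingale with $\E[S_n\mid\Ggg]=\sum_{i=0}^{n-1}(W_0\cdots W_{i-1})\sum_{j=1}^{k_i}W_{i,j}$, which is at most
\begin{align*}
  B := \sum_{i=0}^{\infty}(W_0\cdots W_{i-1})\sum_{j=1}^{k_i}W_{i,j}
     \leq \sum_{i=0}^{\infty}(W_0\cdots W_{i-1})\,N\bigl(V_i,T^\zs,\omega^{\zs\to\os}_*\bigr) < \infty \text{ a.s.,}
\end{align*}
using $W_{i,j}\leq 1$, $k_i\leq N(V_i,T^\zs,\omega^{\zs\to\os}_*)$, and Lemma~\ref{lem:Yn.bounded} (this is the one place the finite log-moment hypothesis on $\chi$ is used). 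Since $B$ is $\Ggg$-measurable and a.s.\ finite, conditionally on $\Ggg$ the process $(S_n)$ is a nonnegative submartingale bounded in $L^1$, hence converges a.s.\ by the submartingale convergence theorem; in particular $\limsup_n S_n<\infty$ a.s. Combining with $W_0\cdots W_{n-1}\to 0$ yields $\limsup_n f_n(T^\zs,\omega^{\zs\to\os}_*)\leq C\limsup_n S_n<\infty$ a.s.

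The main obstacle I anticipate is the bookkeeping that makes the submartingale step rigorous. One cannot argue unconditionally, because the bound $B$ on $\E[S_n\mid\Ggg]$ is only almost surely finite and need not be integrable — conditioning on $\Ggg$ is exactly what turns it into a genuine $L^1$ bound. Care is also needed to check that the filtration indexing is consistent (each $r_{n-i-1}(T^\zs(V_{i,j}),\ldots)$ must be $\mathcal{H}_n$-measurable and the submartingale increments genuinely nonnegative), and to confirm that the martingale property of the $r$'s, stated in Lemma~\ref{lem:Yn.martingale} relative to $\Ggg$, persists after the conditioning is enlarged from $\Ggg$ to $\mathcal{H}_n$ — which it does by the conditional independence of the subtrees in Proposition~\ref{prop:immigration}\ref{item:imm.ind}.
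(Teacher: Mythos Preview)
Your proposal is correct and follows essentially the same approach as the paper: both condition on $\Ggg$, recognise the double sum (the paper calls it $Y_n$) as a nonnegative submartingale with $\E[Y_n\mid\Ggg]$ bounded by the quantity in Lemma~\ref{lem:Yn.bounded}, and invoke submartingale convergence. The only cosmetic differences are that you show $W_0\cdots W_{n-1}\to 0$ via Borel--Cantelli whereas the paper simply notes it is a decreasing positive sequence and hence convergent, and you are more explicit about the filtration $\mathcal{H}_n$ where the paper is content to say ``a sum of independent martingales with an additional martingale added at each step.''
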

  \begin{proof}
    Let 
    \begin{align*}
      Y_n = \sum_{i=0}^{n-1} (W_0\cdots W_{i-1})
                    \sum_{j=1}^{k_i}W_{i,j}Cr_{n-i-1}\bigl(T^\zs(V_{i,j}),\omega^{\zs\to\os}_*|_{T^\zs(V_{i,j})}\bigr),
    \end{align*}
    one of the terms on the right-hand side of \eqref{eq:f_n.expansion}.
    We will show that $Y_n$ converges almost surely to a finite value.
    The idea is to use Lemmas~\ref{lem:Yn.martingale} and \ref{lem:Yn.bounded}
    to show that the conditional distribution
    of $(Y_n)_{n\geq 0}$ given $\Ggg$ is that of a submartingale bounded in $L^1$.
            
    First, consider $Y_n$ conditionally on $\Ggg$. 
    By Proposition~\ref{prop:immigration}\ref{item:imm.measurable},
    the random variables $W_0,W_1,\ldots$ are constants.
    By Proposition~\ref{prop:immigration}\ref{item:imm.nonspine},
    the processes
    \begin{align*}
      \Bigl(r_{n-i-1}\bigl(T^\zs(V_{i,j}),\omega^{\zs\to\os}_*|_{T^\zs(V_{i,j})}\bigr)\Bigr)_{n\geq i+1}
    \end{align*}
    are independent for different values of $i$.
    By Lemma~\ref{lem:Yn.martingale}, these processes are martingales.
    Hence, $Y_n$ conditional on $\Ggg$ is a sum of independent martingales with an additional martingale
    added at each step, which makes it a submartingale conditional on $\Ggg$.

    To prove that $\sup_n\E[Y_n\mid \Ggg]<\infty$ a.s.,
    by Lemma~\ref{lem:Yn.martingale},
    \begin{align*}
      \E\Bigl[r_{n-i-1}\bigl(T^\zs(V_{i,j}),\omega^{\zs\to\os}_*|_{T^\zs(V_{i,j})}\bigr)\Bigmid\Ggg\Bigr]=1 
         \text{ a.s.}
    \end{align*}
    Hence,
    \begin{align}
      \sup_n\E[ Y_n \mid \Ggg ] &= \sup_n\sum_{i=0}^{n-1} (W_0\cdots W_{i-1})\sum_{j=1}^{k_i} W_{i,j}\nonumber\\
        &\leq \sup_n\sum_{i=0}^{n-1} W_0\cdots W_{i-1}N\bigl(V_i, T^\zs, \omega^{\zs\to\os}_*\bigr)\nonumber\\
        &\leq \sum_{i=0}^{\infty}W_0\cdots W_{i-1}N\bigl(V_i, T^\zs, \omega^{\zs\to\os}_*\bigr)<\infty \text{ a.s.}
        \label{eq:Y_n.bound}
    \end{align}
    The first inequality uses the bound $W_{i,j}\leq 1$ along with the fact
    that $k_i=N\bigl(V_i, T^\zs, \omega^{\zs\to\os}_*\bigr)-1$.
    The last inequality is the statement of Lemma~\ref{lem:Yn.bounded}.
        
    We have now shown that $(Y_n)_{n\geq 0}$ conditional on $\Ggg$ 
    is a submartingale bounded in $L^1$. It hence converges almost
    surely to a finite limit. Since $W_0\cdots W_{n-1}$ is a decreasing positive sequence in $n$,
    it also converges as $n\to\infty$. By Lemma~\ref{lem:f_n.expansion},
    we have shown that $f_n(T^\zs,\omega^{\zs\to\os}_*)$
    is bounded by a process converging almost surely to a finite limit as $n\to\infty$.
  \end{proof}

  \begin{proof}[Proof of Theorem~\ref{main 2 colours} ($\Longrightarrow$)]
    Suppose that $\Tpiv$ is supercritical.
    By Lemmas~\ref{lem:f_n.r_n} and \ref{lem:X_n.finite},
    \begin{align*}
      \limsup_{n\to\infty} r_n(T^\zs,\omega^{\zs\to\os}_*)<\infty \text{ a.s.}
    \end{align*}
    By Lemma~\ref{lem:dichotomy}, we have $\RST^{\zs\to\os}\ll\RST^\os$.
    Therefore any almost sure event under $\RST^\os$ is almost sure under $\RST^{\zs\to\os}$
    as well.
    
    Suppose that $\vec{\nu}$ is interpretable. By Proposition~\ref{prop:interpretability},
    the random variable $\omega(R_T)$ is measurable with respect to $T$. Hence there exists
    a measurable map $\varphi\colon \Tt\to\{0,1\}$ such that
    $\omega(R_T)=\varphi(T)$ a.s. Since $T^\zs$ and $T^\os$ are distributed as $T$ conditioned 
    on subevents of $\omega(R_T)=0$ and $\omega(R_T)=1$, respectively, we have
    $\varphi(T^\zs)=0$ a.s.\ and $\varphi(T^\os)=1$ a.s.
    Stating the second of these facts in terms of measure theory, the event
    $\{(t,\tau_*)\in\Ts\colon\varphi(t)=1\}$ has probability one under $\RST^\os$.
    Hence it has probability one under $\RST^{\zs\to\os}$ as well. Since 
    $(T^\zs,\omega^{\zs\to\os}_*)\sim\RST^{\zs\to\os}$, this shows that $\varphi(T^\zs)=1$ a.s.,
    a contradiction.
  \end{proof}

\begin{remark}\label{rmk:kstate.supercritical}
  The main difficulty in extending this proof to the case $\abs{\Sigma}\geq 3$
  is that the regularity properties proven in Section~\ref{sec:pivot.regularity}
  for $\abs{\Sigma}=2$ do not necessarily hold when $\abs{\Sigma}=3$.
  For example, when $\abs{\Sigma}=2$, if the pivot tree is supercritical,
  then it survives with positive probability conditional on either $\omega(R_T)=0$ or $\omega(R_T)=1$
  by Proposition~\ref{prop:2state.regularity}\ref{i:supercrit}, which let us define
  measures $\RST^\zs$ and $\RST^\os$. When $\abs{\Sigma}\geq 3$, if the pivot tree
  is supercritical, it must survive with positive probability from some starting state,
  but it is not obvious that it must do so from multiple starting states.
  Nonetheless, we expect that it can be done and plan to address it in future work.
\end{remark}

\section{Applications of the main results}\label{sec:examples}

\subsection{Monotone tree automata}
%\mou{I tried very hard, but could not really find any reference that defines monotone tree automata the way we do. The term appears, for example, as (left or right) monotone restarting automata, monotone propositional automata, but the definitions are heavily logical and I could not understand if they directly tie in with our definitions. Probably not. In which case, we should mentioned that ours is a different definition.} 

\par We introduce here a special class of tree automata, which we term \emph{monotone tree automata}. This class encompasses tree automata that arise out of many naturally occurring EMSO properties of rooted trees. 

\par Consider an automaton $A$ with set of colours $\Sigma$. Suppose that $\Sigma$ has a total ordering on it, so that without loss of generality, we set
$\Sigma=\{0,\ldots s\}$. Let $\vec{n} = (n_{i}\colon 0 \leq i \leq s)$ and
$\vec{m} = (m_{i}\colon 0 \leq i \leq s)$, where $n_i$ and $m_i$ represent
counts of children of type $i$. If $\sum_{i=0}^{s} n_i=\sum_{i=0}^{s} m_i$, then
we write $\vec{n}\preceq\vec{m}$ if one can modify the configuration of children from $\vec{n}$
to become $\vec{m}$ by only increasing the colours of children. (For example, $(1,2,1)\preceq(1,1,2)$,
since one moves from children $0,1,1,2$ to $0,1,2,2$ by increasing the colour of a child from $1$ to $2$.)
The automaton $A$ is called \emph{monotone} if $\vec{n}\preceq\vec{m}$ implies that $A(\vec{n})\leq A(\vec{m})$.

In the following lemma we state a notable characteristic of the pivot tree when we consider a monotone tree automaton $A$ on two states.

\begin{lemma}\label{monotone_single_state_pivot}
Consider a tree automaton $A$ on two colours. Then $A$ is monotone if and only if pivotal
children always have the same colour as their parents.
\end{lemma}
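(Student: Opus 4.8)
The plan is to reduce the statement to a purely combinatorial property of the map $A\colon\NN_0^{\{0,1\}}\to\{0,1\}$ and then verify both implications by a short case analysis. Recall the notion of pivotality for the automaton used in the proof of Lemma~\ref{lem:growth.rates}: for a list $\sigma=(\sigma_1,\ldots,\sigma_k)\in\{0,1\}^k$ of children's colours, coordinate $i$ is \emph{pivotal} if switching $\sigma_i$ changes the colour $A(\sigma)$ that the automaton assigns to the parent. Since a vertex of a coloured tree can be pivotal (in the sense of Section~\ref{pivot tree in subcritical}) only if switching its colour alters the colour its parent receives from the automaton, and conversely this local matching forces colour matching at every pivotal vertex, the condition ``pivotal children always have the same colour as their parents'' is equivalent to the statement $(\star)$: for every $k$, every $\sigma\in\{0,1\}^k$, and every pivotal coordinate $i$, one has $\sigma_i=A(\sigma)$. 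So I would first make this reduction precise and then show that $A$ is monotone if and only if $(\star)$ holds.

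For the forward direction, assume $A$ is monotone, fix $\sigma$ and a pivotal coordinate $i$, and set $a=A(\sigma)$; write $\sigma'$ for the list obtained from $\sigma$ by switching $\sigma_i$. If $a=1$ and $\sigma_i=0$, then $\sigma\preceq\sigma'$, so monotonicity gives $A(\sigma')\geq A(\sigma)=1$, hence $A(\sigma')=A(\sigma)$, contradicting pivotality of $i$. Symmetrically, if $a=0$ and $\sigma_i=1$, then $\sigma'\preceq\sigma$ gives $A(\sigma')\leq A(\sigma)=0$, again forcing $A(\sigma')=A(\sigma)$ and contradicting pivotality. So $\sigma_i=a$, which is $(\star)$.

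For the converse I would argue the contrapositive. If $A$ is not monotone, there are child-configurations $\vec n\preceq\vec m$ with $A(\vec n)=1$ and $A(\vec m)=0$; realizing the passage from $\vec n$ to $\vec m$ as a chain of moves each replacing a single colour-$0$ child by a colour-$1$ child, the value of $A$ must drop from $1$ to $0$ across some consecutive pair. In colour-count notation this yields a total child-count $k$ and an index $\ell$ with $A(k-\ell,\ell)=1$ and $A(k-\ell-1,\ell+1)=0$. Now take the list $\sigma$ consisting of $k-\ell-1$ children of colour $0$ and $\ell+1$ children of colour $1$; then $A(\sigma)=0$, but switching any one of the $\ell+1$ colour-$1$ children to colour $0$ produces the configuration $(k-\ell,\ell)$, whose parent colour is $1$. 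Hence that colour-$1$ child is a pivotal coordinate for a parent of colour $0$, so $(\star)$ fails.

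The argument is short, so I do not expect a serious obstacle; the one point requiring care is the initial reduction, namely verifying that ``pivotal child'' in the pivot-tree sense carries exactly the local content $(\star)$ (a child pivotal for the root is in particular a coordinate whose switch changes the colour the automaton assigns to its parent, and conversely $(\star)$ forces colour matching at every pivotal vertex). As a sanity check, the at-least-two automaton of Example~\ref{ex:at.least.two} is monotone, and indeed a pivotal child there arises only when a colour-$1$ parent has exactly two colour-$1$ children or a colour-$0$ parent has exactly one, in which cases the pivotal child does share the parent's colour.
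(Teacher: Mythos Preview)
Your proof is correct and follows essentially the same approach as the paper's. Both arguments rest on the same two observations: a pivotal child of the opposite colour immediately witnesses a violation of monotonicity via a single switch, and conversely, a failure of monotonicity can be localized (via a chain of single-child switches) to a step where the parent's colour drops while a child's colour rises, producing a pivotal child of the wrong colour. You and the paper simply swap which direction is argued directly and which by contrapositive; your added remark reducing ``pivotal child'' to the local coordinate notion $(\star)$ is a helpful clarification that the paper leaves implicit.
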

\begin{proof}

Let $\Sigma=\{0,1\}$. Assume pivotal children always have the same state as their parents, and consider
two configurations of children $\vec{n}\preceq\vec{m}$. Suppose that $A(\vec{n})=1$.
We can move from $\vec{n}$ to $\vec{m}$ only by changing vertices from state~$0$ to $1$.
These vertices are never pivotal, so $A(\vec{m})=1$. Since $A(\vec{n})=1$ implies $A(\vec{m})=1$,
the automaton $A$ is monotone.

Conversely, suppose that a node can have a pivotal child of the opposite state of itself. Then
swapping this child's state changes the parent's state in the opposite direction, showing that $A$
is not monotone.
\end{proof}

\begin{lemma}\label{lem:2state.same.growth}
  Let $A$ be a monotone tree automaton with $\Sigma = \{0,1\}$,
  and let $\Tpiv$ be the pivot tree associated with some fixed point. Then
  \begin{align*}
    \E[\ell_1(\Tpiv) \mid \omega(R_T)=0] = \E[\ell_1(\Tpiv) \mid \omega(R_T)=1].
  \end{align*}
  That is, the expected number of pivotal children that a vertex has is the same regardless
  of whether the vertex is labeled $0$ or $1$.
\end{lemma}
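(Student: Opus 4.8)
The plan is to reduce this identity directly to the first equation of Lemma~\ref{lem:growth.rates} using the structural fact about monotone automata from Lemma~\ref{monotone_single_state_pivot}. Recall that by Lemma~\ref{monotone_single_state_pivot}, since $A$ is monotone on $\Sigma=\{0,1\}$, every pivotal child of a vertex has the same colour as that vertex. Applying this to the root $R_T$: conditional on $\omega(R_T)=0$, the root has no pivotal children of colour~$1$, so in the notation of Lemma~\ref{lem:growth.rates} we have $Z_1=0$ a.s.\ on this event, and hence $\ell_1(\Tpiv)=Z_0+Z_1=Z_0$; symmetrically, conditional on $\omega(R_T)=1$ we have $Z_0=0$ a.s., so $\ell_1(\Tpiv)=Z_1$.

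The remaining step is simply to quote the first identity of Lemma~\ref{lem:growth.rates}, namely $\E[Z_0\mid\omega(R_T)=0]=\E[Z_1\mid\omega(R_T)=1]$. Chaining the three equalities gives
\begin{align*}
  \E[\ell_1(\Tpiv)\mid\omega(R_T)=0] = \E[Z_0\mid\omega(R_T)=0]
    = \E[Z_1\mid\omega(R_T)=1] = \E[\ell_1(\Tpiv)\mid\omega(R_T)=1],
\end{align*}
which is the claimed statement.

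There is essentially no obstacle here: the only thing to be careful about is that the random variables $Z_0,Z_1$ in Lemma~\ref{lem:growth.rates} are defined exactly as the numbers of pivotal children of the root of each colour, so that the substitutions $\ell_1(\Tpiv)=Z_0$ and $\ell_1(\Tpiv)=Z_1$ on the respective conditioning events are immediate, and that pivotality there is taken with respect to the (single, in the two-colour case) target set, matching the pivot tree $\Tpiv$. No moment hypotheses are needed beyond those already in force.
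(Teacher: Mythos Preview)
Your proof is correct and is essentially identical to the paper's own argument: both use Lemma~\ref{monotone_single_state_pivot} to identify $\ell_1(\Tpiv)$ with $Z_0$ (resp.\ $Z_1$) on the event $\omega(R_T)=0$ (resp.\ $\omega(R_T)=1$), and then invoke the first identity of Lemma~\ref{lem:growth.rates}.
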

\begin{proof}
  By Lemma~\ref{monotone_single_state_pivot},
  \begin{align*}
    \E[\ell_1(\Tpiv) \mid \omega(R_T)=0] &= \E[Z_0\mid\omega(R_T)=0],\\\intertext{and}
    \E[\ell_1(\Tpiv) \mid \omega(R_T)=1] &= \E[Z_1\mid\omega(R_T)=1],
  \end{align*}
  using the notation of Lemma~\ref{lem:growth.rates}. By this lemma,
  these quantities are equal.
\end{proof}

When $\abs{\Sigma}=2$, the automaton distribution map $\Psi$ maps
a distribution $\Ber(x)$ to $\Ber(y)$. We thus abuse notation and
treat $\Psi$ as a map from $[0,1]$ to itself, writing
$\Psi(x)=y$ instead of $\Psi(\Ber(x))=\Ber(y)$.
We also say that $p\in[0,1]$ is a fixed point of $\Psi$ rather than saying
that $\Ber(p)$ is.

In the next lemma, we give a convenient way of determining whether a fixed point corresponding to a given monotone automaton with two colours is rogue or not.

\begin{lemma}\label{growth_rate_derivative}
Suppose $A$ is a monotone automaton with set of colours $\Sigma = \{0, 1\}$. For a fixed point 
$\vec{\nu}=\Ber(p)$ with $0<p<1$, the growth rate of the pivot tree is equal to $\Psi'(p)$.
\end{lemma}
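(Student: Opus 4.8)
The plan is to compute $\Psi'(p)$ directly from the definition of $\Psi$ and recognize the result as the growth rate of $\Tpiv$ via Lemma~\ref{lem:2state.same.growth}. Recall that for $\vec\nu=\Ber(x)$, the value $\Psi(x)$ is the probability that the root gets colour~$1$ when the root has $\chi$-many children, each independently coloured $1$ with probability $x$. Writing $N$ for the number of children of the root and $\sigma_1,\ldots,\sigma_N$ for their i.i.d.\ $\Ber(x)$ colours, we have $\Psi(x)=\E\bigl[\1\{A(\sigma_1,\ldots,\sigma_N)=1\}\bigr]$. The first step is to differentiate this in $x$. Conditioning on $N=k$, $\Psi(x)$ restricted to that event is a multilinear polynomial in $x$ (it is $\P[A(\sigma_1,\ldots,\sigma_k)=1]$ with $\sigma_i\sim\Ber(x)$ i.i.d.), so we may differentiate term by term; the standard influence/Margulis--Russo-type identity gives
\begin{align*}
  \frac{d}{dx}\P\bigl[A(\sigma_1,\ldots,\sigma_k)=1\bigr]
    &= \sum_{i=1}^k \P\bigl[\text{coordinate $i$ is pivotal for $A$ at $(\sigma_1,\ldots,\sigma_k)$}\bigr],
\end{align*}
where the probability on the right is again under $\sigma_j\sim\Ber(x)$ i.i.d. This is because flipping the sign of $\partial/\partial x$ on a single coordinate of a multilinear function and taking expectations reproduces exactly the pivotality probability; I would cite the elementary Boolean-analysis fact (it also appears implicitly in the proof of Lemma~\ref{lem:growth.rates}) rather than rederive it.

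Next I would sum over $k$ against $\chi(k)$. The right-hand side becomes the expected number of coordinates $i\in\{1,\ldots,N\}$ that are pivotal for $A$ at the colouring of the root's children — that is, exactly the expected number of children of $R_T$ that are pivotal for $(T,\omega)$, in the sense of Lemma~\ref{lem:growth.rates}, evaluated at $x=p$. In symbols, with $Z_0,Z_1$ as in Lemma~\ref{lem:growth.rates} (the numbers of pivotal children of $R_T$ of colours $0$ and $1$), we get $\Psi'(p)=\E[Z_0+Z_1]=\E[\ell_1(\Tpiv)]$. Here it is important that we are using the fixed point $\vec\nu=\Ber(p)$ itself to colour the children, so that the conditional distribution of $(\omega(v))_{v\in L_1}$ given $T|_1$ is i.i.d.\ $\Ber(p)$, matching the i.i.d.\ $\Ber(x)$ with $x=p$ in the computation above.

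Finally I would identify $\E[\ell_1(\Tpiv)]$ with the growth rate of $\Tpiv$. By Proposition~\ref{prop:2state.regularity}\ref{i:eig}, the growth rate (the largest eigenvalue in absolute value of the mean matrix $M$) equals $\E[\ell_1(\Tpiv)]$, so we are done; alternatively, since $A$ is monotone, Lemma~\ref{lem:2state.same.growth} tells us $m_{00}=\E[Z_0\mid\omega(R_T)=0]=\E[Z_1\mid\omega(R_T)=1]=m_{11}$ and also $m_{10}=\E[Z_0\mid\omega(R_T)=1]=0$ and $m_{01}=0$ (a pivotal child always shares its parent's colour by Lemma~\ref{monotone_single_state_pivot}), so $M$ is diagonal with both entries equal to $m_{00}$, and its spectral radius is $m_{00}=\E[\ell_1(\Tpiv)]$; either way the growth rate is $\Psi'(p)$.

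I expect the only real subtlety to be justifying the differentiation-under-the-expectation and the pivotality identity cleanly — i.e., making sure the term-by-term differentiation over $k$ is legitimate (it is, since on $\{N=k\}$ we have a polynomial, and the interchange of $\frac{d}{dx}$ with $\sum_k\chi(k)$ holds because the partial sums are polynomials with derivatives bounded on $[0,1]$, uniformly enough to pass the limit), and being careful that "pivotal coordinate for $A$" in the Boolean sense coincides here with "pivotal child for $(T,\omega)$" in the sense of Section~\ref{pivot tree in subcritical}, which it does when $\abs\Sigma=2$ since there the target set is forced. Everything else is a short computation.
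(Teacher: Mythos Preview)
Your proposal is correct and follows essentially the same approach as the paper: compute $\Psi'(p)=\E[\ell_1(\Tpiv)]$ via the Margulis--Russo identity (the paper cites \cite[Theorem~3.2]{GS} explicitly), then invoke Proposition~\ref{prop:2state.regularity}\ref{i:eig} to identify $\E[\ell_1(\Tpiv)]$ with the spectral radius of $M$. Your alternative observation that monotonicity forces $M$ to be diagonal (via Lemma~\ref{monotone_single_state_pivot}) is correct and gives a second route to the last step, but the paper does not use it; your extra care about interchanging the derivative with the sum over $k$ is fine but not emphasized in the paper's version.
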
 

\begin{proof}
In Lemma~\ref{prop:2state.regularity}, we show that if $M$ is the matrix of means for $\Tpiv$, then the spectral radius is equal to the expected number of pivotal children of the root, which is the growth rate of $\Tpiv$. Thus all we have to establish is that $\Psi'(p)$ is equal to the spectral radius of $M$.

The value of $\Psi(x)$ is given by the following procedure: Sample a number of children from $\chi$
and assign them i.i.d.\ $\Ber(x)$ states. Then, apply the automaton to determine the state
of the parent. Then $\Psi(x)$ is the expected value of the parent. Abusing notation
slightly by letting $A$ act on a sequence of states as we did in the proof
of Lemma~\ref{lem:growth.rates}, we have
\begin{align*}
  \Psi(x) = \E^x[A(X_1,\ldots,X_K)],
\end{align*}
where $K\sim\chi$ and $(X_i)_{i\geq 1}$ are i.i.d.\ $\Ber(x)$ under $\E^x$.
Let $P$ denote the number of coordinates of $(X_1,\ldots,X_k)$ that are pivotal for
$A$ at $(X_1,\ldots, X_k)$.
By the Margulis--Russo formula \cite[Theorem~3.2]{GS},
\begin{align*}
  \frac{d}{dx}\E^x[A(X_1,\ldots,X_k)\mid K] = \E^x[P\mid K].
\end{align*}
Taking expectations,
\begin{align*}
  \Psi'(x) = \E^x[P].
\end{align*}
Under $\E^p$, the random variable $P$ has the distribution of the number of pivotal
children of the root of $(T,\omega)$. Hence,
\begin{align*}
 \Psi'(p) &= \E[\ell_1(\Tpiv)].\qedhere
\end{align*}
\end{proof}

\begin{remark}
  Since a fixed point $p$ of $\Psi$ is attractive if $\abs{\Psi'(p)}< 1$,
  this lemma together with Theorem~\ref{main 2 colours} shows that for a monotone two-state automaton,
  an attractive fixed point is always interpretable. We mention that this is not
  true for nonmonotone automata. For example, the fixed point in
  Example~\ref{int_eg_2} can be computed to be rogue for $\lambda=4$ despite being attractive.
\end{remark}

We are finally ready to answer Question~\ref{q:spencer}.
Recall that the at-least-two automaton assigns the parent state~$1$ if and
only if at least two children have state~$1$.
We mentioned in the introduction that with Poisson child distribution,
this automaton has either one, two, or three fixed points depending
on $\lambda$. We will prove this in detail now, and we will classify the fixed points
as rogue or interpretable.
\begin{example}\label{ex:answer}
  Let $A$ be the at-least-two automaton, and let $\chi\sim\Poi(\lambda)$.
 As we saw
in \eqref{eq:at.least.two.recurrence}, the automaton distribution map
is
\begin{align*}
  \Psi(x) = 1 - e^{-\lambda x}(1+\lambda x).
\end{align*}
Define
\begin{align*}
  \lambdacrit &= \min_{x>0} \frac{x}{1-e^{-x}(1+x)}\approx 3.35.
\end{align*}
The function $x/\bigl(1-e^{-x}(1+x)\bigr)$ is convex on $(0,\infty)$ and hence has a unique minimizer,
which we denote by $x^*$. 
Now, substituting $\lambda x$ for $x$ in the function to be minimized, consider the inequality 
\begin{align}
  \frac{\lambda x}{\Psi(x)}\leq \lambda\label{eq:Psi.ineq}
\end{align}
on $(0,\infty)$.
If $\lambda<\lambdacrit$, it has no solutions, since $\lambda x/\Psi(x)\geq\lambdacrit$.
Hence $\Psi(x)<x$ for $x>0$, demonstrating that $\Psi$ has no fixed points other than
the trivial $x=0$.
If $\lambda=\lambdacrit$, then \eqref{eq:Psi.ineq} has exactly one solution.
The solution is $x=x^*/\lambdacrit$, and equality occurs in \eqref{eq:Psi.ineq}, making
it a fixed point of $\Psi$.
Since $x^*<\lambdacrit$, the solution lies in $(0,1)$. Hence $\Psi$ has one nontrivial fixed point in this case.
If $\lambda>\lambdacrit$, then \eqref{eq:Psi.ineq} has an interval of solutions $[a,b]$, which
contains $x^*/\lambda\in(0,1)$. It is easy to check that
$a>0$ and $b<1$. Thus $\Psi(x)$ lies below the line $y=x$ on $(0,a)$, then
sits above it on $(a,b)$, and then lies below it on $(b,1]$, giving $\Psi$ fixed points
$a,b$ in addition to $0$.

Now, assume that $\lambda>\lambdacrit$, so that $\Psi$ has fixed points $0$, $a$, and $b$.
Question~\ref{q:spencer} asks whether there exists a classification of trees
into states $\{0,1\}$ such that a tree has state~$1$ if and only if it has at least
two children of state~$1$, and a Galton--Watson tree with $\Poi(\lambda)$
child distribution has state~$1$
with probability $a$. In other words, the question is whether
$a$ is rogue or interpretable. By Lemma~\ref{growth_rate_derivative},
we can answer this question by finding $\Psi'(a)$. 
Since $\Psi(x)$ lies under the curve $y=x$ up until $x=a$ and then rises above it,
its derivative at $x=a$ exceeds $1$. Therefore $a$ is a rogue solution by
Lemma~\ref{growth_rate_derivative} and Theorem~\ref{main 2 colours}.
\end{example}

The following result uses a similar approach:
\begin{prop}
  The highest and lowest fixed points of a monotone two-state automaton are always interpretable.
\end{prop}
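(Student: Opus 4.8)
The plan is to mirror the analysis of Example~\ref{ex:answer}: one checks that at its largest and smallest fixed points the automaton distributional map $\Psi\colon[0,1]\to[0,1]$ meets the diagonal $y=x$ ``from the correct side,'' deduces that the slope of $\Psi$ there is at most $1$, and then invokes Lemma~\ref{growth_rate_derivative} together with the $(\Longleftarrow)$ direction of Theorem~\ref{main 2 colours}.

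First I would record that $\Psi$ is nondecreasing. For $x\le y$, couple $\Ber(x)$ and $\Ber(y)$ so that the first is pointwise at most the second, assign states to the children of the root in this coupled fashion, and use monotonicity of $A$—hence of each iterated map $A^n_t$—to see that the state induced at the root under the $y$-assignment dominates that under the $x$-assignment; taking expectations gives $\Psi(x)\le\Psi(y)$. Let $p_{\min}\le p_{\max}$ be the smallest and largest fixed points of $\Psi$, which exist because the fixed-point set is closed and nonempty. I would treat $p_{\max}$, the case of $p_{\min}$ being symmetric. If $p_{\max}=1$, then $\Psi(1)=1$, so for every $k$ in the support of $\chi$ the automaton assigns state~$1$ to a parent with $k$ children all of state~$1$; hence the constant map $\iota\equiv 1$ is compatible with $A$ on $\GW(\chi)$-a.e.\ tree and is an interpretation corresponding to $\Ber(1)$. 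The case $p_{\max}=0$, which forces $0$ to be the unique fixed point, is handled identically with $\iota\equiv 0$.

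So the substantive case is $0<p_{\max}<1$, where $\Psi(1)<1$. Since $\Psi(x)-x$ is continuous, nonvanishing on $(p_{\max},1]$, and negative at $x=1$, it is negative throughout $(p_{\max},1]$; thus $\Psi(x)<x$ there, and for $x\in(p_{\max},1]$,
\begin{align*}
  \frac{\Psi(x)-\Psi(p_{\max})}{x-p_{\max}}=\frac{\Psi(x)-p_{\max}}{x-p_{\max}}<1.
\end{align*}
Letting $x\downarrow p_{\max}$ shows that the right-hand difference quotients of $\Psi$ at $p_{\max}$ stay below $1$. Writing $\Psi=\sum_k\chi(k)g_k$ with each $g_k$ the polynomial $x\mapsto\E^x[A(X_1,\dots,X_k)]$, which satisfies $g_k'\ge 0$, Fatou's lemma gives $\sum_k\chi(k)g_k'(p_{\max})\le\liminf_{x\downarrow p_{\max}}\bigl(\Psi(x)-p_{\max}\bigr)/(x-p_{\max})\le 1$; by the Margulis--Russo computation underlying Lemma~\ref{growth_rate_derivative}, $\sum_k\chi(k)g_k'(p_{\max})$ is exactly the growth rate of the pivot tree $\Tpiv$ associated with $\Ber(p_{\max})$. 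Hence $\Tpiv$ is subcritical or critical, and by the $(\Longleftarrow)$ direction of Theorem~\ref{main 2 colours}—equivalently, Proposition~\ref{prop:subcritical.implies.interpretable} together with Proposition~\ref{prop:2state.regularity}, neither of which needs a moment hypothesis—the fixed point $p_{\max}$ is interpretable. For $p_{\min}$ one argues identically, using $\Psi(0)\ge 0$, the consequent inequality $\Psi(x)>x$ on $[0,p_{\min})$ when $p_{\min}>0$, and a left-hand difference quotient at $p_{\min}$.

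The one delicate point is pinning down which side of the diagonal $\Psi$ lies on immediately beyond the extremal fixed points; this is exactly where continuity of $\Psi$, the endpoint inequalities $\Psi(0)\ge 0$ and $\Psi(1)\le 1$, and the absence of intervening fixed points are all used (and, secondarily, the minor care needed to pass from one-sided difference quotients to the growth rate when $\Psi$ need not be differentiable). Once that is established, everything else—the monotonicity of $\Psi$, the slope bound, and the appeal to Lemma~\ref{growth_rate_derivative} and Theorem~\ref{main 2 colours}—is routine.
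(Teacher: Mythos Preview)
Your proposal is correct and follows essentially the same route as the paper: show that at an extremal fixed point the graph of $\Psi$ meets $y=x$ with slope at most $1$, then invoke Lemma~\ref{growth_rate_derivative} and the $(\Longleftarrow)$ direction of Theorem~\ref{main 2 colours}. The paper's version is terser---it simply asserts that $\Psi$ is continuously differentiable and that at the highest fixed point the graph either crosses $y=x$ from above or is tangent to it---whereas you supply extra care on two points the paper glosses over: you use Fatou on the termwise expansion $\Psi=\sum_k\chi(k)g_k$ to bound the growth rate without assuming $\Psi$ is differentiable, and you correctly observe that the log-moment hypothesis in Theorem~\ref{main 2 colours} is only used in the supercritical direction, so Proposition~\ref{prop:subcritical.implies.interpretable} suffices here.
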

\begin{proof}
  If $\Psi(1)=1$, then the highest fixed point is $1$,
  which has the trivial interpretation $t\mapsto 1$. 
  Otherwise $\Psi(1)<1$, and at the highest fixed point the graph of $\Psi$ is either
  crossing from above the line $y=x$ to below, or it has $y=x$ as a tangent line (note that $\Psi$
  is continuously differentiable). In either
  case $\Psi'(x)\leq 1$ at the fixed point, making it interpretable by Lemma~\ref{growth_rate_derivative} 
  and Theorem~\ref{main 2 colours}.
  The same argument shows that the smallest fixed point is also interpretable.
\end{proof}

\subsection{More examples of two-state automata}\label{int_egs}
We give some examples of tree automata for which a fixed point undergoes a phase transition
from interpretable to rogue as a parameter of the child distribution is varied. 
We also give a numerical example to show that a two-state automaton can have
many fixed points.
As in the previous section, we write $\Psi(x)=y$ rather than $\Psi(\Ber(x))=\Ber(y)$
for two-state automata.
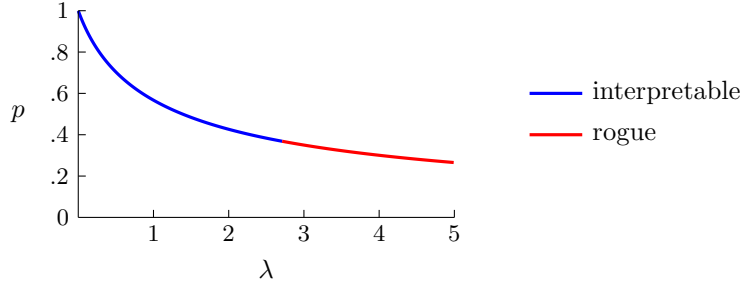
\begin{figure}
\centering
    \begin{tikzpicture}[yscale=2.75]
      %\node[anchor=south west,inner sep=0] at (0,0) {\includegraphics[width=0.8\textwidth]{./ExampleEncyclopedia/fp_plots/tmp_1010.png}};
      %\draw (0,0) node [left,font=\small] {$0$};
      \draw[very thick,blue] plot file{A1010green.table};
      \draw[very thick,red] plot file{A1010red.table}; 
      \draw (0,0) -- (0,1);
%      \draw (3.35,.1)--(3.35,-.1) node[below] {$3.35$}     (-.1,.535) node[left] {$.53$} -- (.1,.535);
      \foreach \x in {.2, .4, .6, .8, 1}
        \draw (.1, \x)--(0,\x) node[left,font=\small] {$\x$};   
      \draw (0,0) node [left,font=\small] {$0$};
      \path (-.8, .5) node {$p$}
            (2.5, -.25) node {$\lambda$};
      \draw (0,0) -- (5,0);
      \foreach \x in {1, 2, 3, 4, 5}
        \draw (\x, .03636)--(\x, 0) node[below,node font=\small] {$\x$};                                     
      \draw[very thick,blue] (6,.6)--(6.7,.6) node[black,anchor=mid west] {interpretable};
      \draw[very thick,red] (6,.4)--(6.7,.4) node[black,anchor=mid west] {rogue};
    \end{tikzpicture}
\caption{Plot for Example~\ref{ex.A1010}, showing the fixed point for
  the automaton assigning the parent state `1' if and only if there are zero `1'~children
  with a $\Poi(\lambda)$ child distribution. %The blue (respectively red) indicate interpretable (respectively rogue) fixed points with a $\Poi(\lambda)$ child distribution.
}
\label{fig:A1010}
\end{figure}
\begin{example}\label{int_eg_3}\label{ex.A1010}
Consider the automaton where a node is in state~$1$ if and only if it has zero children in state~$1$,
given formally by the map $(n_0,n_1)\mapsto\1\{n_1=0\}$.
The distributional map corresponding to this automaton with $\Poi(\lambda)$ offspring distribution is
\begin{equation*}%\label{Psi_no_1_child}
\Psi(x) = e^{-\lambda x}.
\end{equation*}
Notice that if we consider the function $f_{\lambda}(x) = e^{-\lambda x} - x$, then $f''_{\lambda}(x) = \lambda^{2} e^{-\lambda x} > 0$, showing that it is a convex function. Moreover, $f_{\lambda}(0) = 1$ and $f_{\lambda}(1) = e^{-\lambda} - 1 < 0$ for all $\lambda > 0$. Hence $f_{\lambda}$ has a unique root in $(0,1)$, which tells us that there is a unique fixed point $x(\lambda)$ of $\Psi$.

For this automaton, a node in state~$0$ has pivotal children if and only if it has exactly one child in state~$1$ (whatever the number of $0$-state children may be), and this child will be pivotal. 
A node in state~$1$ has pivotal children if and only if it has at least one child, in which case every child will be pivotal. If $X_{0}$ denotes the total number of $0$-state children and $X_{1}$ the total number of $1$-state children of the root, then
%\begin{align}
%E\left[\ell_{1}(\Tpiv)\right] &= \sum_{k=1}^{\infty} k \P\left[\ell_{1}(\Tpiv) = k\right] \nonumber\\
%&= \sum_{k=1}^{\infty} k \left\{\P\left[\ell_{1}(\Tpiv) = k, \omega(R_{T}) = 0\right] + \P\left[\ell_{1}(\Tpiv) = k, \omega(R_{T}) = 1\right]\right\} \nonumber\\
%&= \P\left[\ell_{1}(\Tpiv) = 1, \omega(R_{T}) = 0\right] + \sum_{k=1}^{\infty} k \P\left[\ell_{1}(\Tpiv) = k, \omega(R_{T}) = 1\right] \nonumber\\
%&= \P[X_{1} = 1] + \sum_{k=1}^{\infty} k \P[X_{0} = k, X_{1} = 0] \nonumber\\
%&= \lambda x e^{-\lambda x} + \sum_{k=1}^{\infty} k \cdot e^{-\lambda (1-x)} \cdot \frac{\lambda^{k} (1-x)^{k}}{k!} \cdot e^{-\lambda x} \nonumber\\
%&= \lambda x e^{-\lambda x} + e^{-\lambda} \sum_{k=1}^{\infty} \frac{\lambda^{k} (1-x)^{k}}{(k-1)!} \nonumber\\
%&= \lambda x e^{-\lambda x} + e^{-\lambda} \cdot \lambda (1-x) \cdot e^{\lambda (1-x)} = \lambda e^{-\lambda x}.
%\end{align}
%\toby{shorter calculation:}
\begin{align*}
  \E[ \ell_1(\Tpiv) ] &= \E\bigl[ \1\{X_1=1\} + X_0\1\{X_1=0\} \bigr]\\
    &= \lambda x e^{-\lambda x} + \E[ X_0] \P[X_1=0]\\
    &= \lambda x e^{-\lambda x} + \lambda(1-x)e^{-\lambda x} = \lambda e^{-\lambda x}.
\end{align*}
Thus, to determine if the fixed point is rogue or interpretable with Theorem~\ref{main 2 colours},
we have to determine if $\lambda e^{-\lambda x(\lambda)}\leq 1$.

Rewriting the equation $x(\lambda)=e^{-\lambda x(\lambda)}$, we find that
\begin{equation}\label{rewrite}
\log x(\lambda) = - \lambda x(\lambda) \quad \iff \quad y(\lambda) \log y(\lambda) = \lambda,
\end{equation}
where $y(\lambda) = \left(x(\lambda)\right)^{-1}$. Noting that the function $u \log u$ is strictly increasing over all $u > 1$ (which is the range we care about), we conclude that $\lambda > e$ if and only if $y(\lambda) > e$. In that case, from the first equation of \eqref{rewrite}, we have 
\begin{equation*}
\lambda e^{-\lambda x(\lambda)} = \lambda x(\lambda) = \log y(\lambda) > 1.
\end{equation*}
This shows that $E\left[\ell_{1}(\Tpiv)\right] > 1$ for $\lambda > e$, and $E\left[\ell_{1}(\Tpiv)\right] \leq 1$ for $\lambda \leq e$. 
By Theorem~\ref{main 2 colours} and Proposition~\ref{prop:2state.regularity}\ref{i:eig},
the fixed point is interpretable for $\lambda \leq e$ and rogue for $\lambda > e$. 
This is illustrated in Figure~\ref{fig:A1010}. 
\end{example}

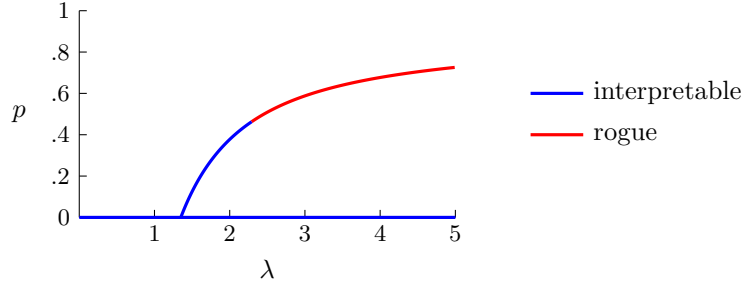
\begin{figure}
\centering
    \begin{tikzpicture}[yscale=2.75]
      \draw (0,0) -- (0,1);
%      \draw (3.35,.1)--(3.35,-.1) node[below] {$3.35$}
%            (-.1,.535) node[left] {$.53$} -- (.1,.535);
      \foreach \x in {.2, .4, .6, .8, 1}
        \draw (.1, \x)--(0,\x) node[left,font=\small] {$\x$};
      \foreach \x in {1, 2, 3, 4, 5}
        \draw (\x, .03636)--(\x, 0) node[below,node font=\small] {$\x$};
      \draw (0,0) node [left,font=\small] {$0$};
      \path (-.8, .5) node {$p$}
            (2.5, -.25) node {$\lambda$};
      \draw (0,0) -- (5,0);
      \draw[very thick,red] plot file{A0001red.table};
      \draw[very thick,blue] plot file{A0001green.table};
      %\draw[very thick,blue] plot file{test_lower.table};
      \draw[very thick,blue] (0,0)--(5,0);
      \draw[very thick,blue] (6,.6)--(6.7,.6) node[black,anchor=mid west] {interpretable};
      \draw[very thick,red] (6,.4)--(6.7,.4) node[black,anchor=mid west] {rogue};
    \end{tikzpicture}
\caption{Plot for Example~\ref{int_eg_2}, showing the fixed point for
  the automaton assigning the parent state `1' if and only if it has at least one `0' child
  and at least one `1' child
  with a $\Poi(\lambda)$ child distribution.}
  \label{fig:A0001}
\end{figure}

\begin{example}\label{int_eg_2}
Consider the automaton $A$ on colour set $\Sigma = \{0, 1\}$, where a node is in state $1$ if and only if it has at least one child in state $0$ and at least one child in state $1$. That is, the automaton is the map $(n_0,n_1)\mapsto\1\{(n_0\geq 1)\land(n_1\geq 1)\}$. The distributional map for this automaton
with child distribution $\Poi(\lambda)$ is given by
\begin{equation}\label{psi_one_0_one_1}
\Psi(x) = 1 - e^{-\lambda(1-x)} - e^{-\lambda x} + e^{-\lambda}.
\end{equation}
Note that 
\begin{align}
\Psi'(x) = -\lambda e^{-\lambda (1-x)} + \lambda e^{-\lambda x},
\end{align}
and
\begin{align}
\Psi''(x) = -\lambda^{2} e^{-\lambda (1-x)} - \lambda^{2} e^{-\lambda x} < 0,
\end{align}
showing that $\Psi$ is strictly concave. We observe that $\Psi(0)=0$ and
$\Psi'(0)= \lambda(1-e^{-\lambda})$. Let $\lambda_0\approx 1.35$ be the unique
solution to $\lambda(1-e^{-\lambda})=1$. If $\lambda\leq\lambda_0$, we have
$\Psi'(0)\leq 1$, and the graph of $\Psi(x)$ stays below the line $y=x$ for all $x>0$.
Thus $0$ is the only fixed point of $\Psi(x)$ in this case.
If $\lambda > \lambda_0$, then $\Psi'(0)>1$. Since $\Psi(1)=0$, this implies that the
graph of $\Psi(x)$ rises above the line $y=x$ and then dips back below it, giving rise
to a nontrivial fixed point we denote by $x(\lambda)$.

Let $X_i$ be the number of children of the root in state~$i$.
We summarize all configurations in which any of these children are pivotal:
\begin{description}
  \item[Root in state~0]\ 
    \begin{itemize}
      \item $X_0\geq 2$, $X_1=0$: $X_0$ pivotal children
      \item $X_0=0$, $X_1\geq 2$: $X_1$ pivotal children
    \end{itemize}
  \item[Root in state~1]\ 
    \begin{itemize}
      \item $X_0=X_1=1$: two pivotal children
      \item $X_0\geq 2$, $X_1=1$: one pivotal child
      \item $X_0=0$, $X_1\geq 2$: one pivotal child
    \end{itemize}
\end{description}
\noindent Thus,
\begin{align*}
  \E[\ell_1(\Tpiv)] &= \E\Bigl[ X_0\1\{X_0\geq 2,X_1=0\} + X_1\1\{X_0=1, X_1\geq 2\}
     \\&\qquad\quad\quad+\1\{X_0=X_1=1\} + \1\{X_0\geq 2, X_1=1\} + \1\{X_0=1,X_1\geq 2\} \Bigr]\\
   &= e^{-\lambda x}\lambda(1-x)\bigl(1-e^{-\lambda(1-x)}\bigr) +
      e^{-\lambda (1-x)}\lambda x\bigl(1-e^{-\lambda x}\bigr)\\
    &\qquad \quad\quad + \P[X_0\geq 1, X_1=1] + \P[X_0=1,X_1\geq 1]\\
    &= \lambda\bigl( e^{-\lambda x}+e^{-\lambda(1-x)} - 2e^{-\lambda}\bigr)
\end{align*}
Substituting from \eqref{psi_one_0_one_1}, we get
\begin{equation*}\label{simplified_ell_1}
  \E[\ell_{1}(\Tpiv)] = \lambda \bigl(1 - x(\lambda) - e^{-\lambda}\bigr).
\end{equation*}
For $\lambda=\lambda_c$, we have $x(\lambda)=0$ and $\E[\ell_1(\Tpiv)]=\lambda_0(1-e^{-\lambda_0})=1$.
With some laborious calculus, one can establish that as $\lambda$ increases, the quantity
$\E[\ell_1(\Tpiv)]$ decreases and then increases, reaching $1$ at $\lambda_1\approx 2.30$.
Thus, by Theorem~\ref{main 2 colours} and Proposition~\ref{prop:2state.regularity}\ref{i:eig}, 
this fixed point $x(\lambda)$ is interpretable for $\lambda\in[\lambda_0,\lambda_1]$
and rogue for $\lambda>\lambda_1$. A plot showing the behaviour
of the fixed points is given in Figure~\ref{fig:A0001}.
\end{example}

\begin{example}\label{ex.manyroots}
Finally we present an example to demonstrate that the automaton may have many fixed points. Consider the automaton $A$ on colour set $\Sigma = \{0, 1\}$, where a node is in state $1$ unless it has $x$ children in state $1$ for $x\in \{1,2,3,4,5\}\cup\{8,9,10,11\}$ in which case it is state $0$. That is, the automaton is the map $(n_0,n_1)\mapsto\1\{(n_1\in\{0,6,7\})\lor(n_1\geq 12)\}$.  The plot of the fixed points for this automaton with child distribution $\Poi(\lambda)$ is shown in Figure~\ref{fig:manyroots}.

\begin{figure}
\centering
    \begin{tikzpicture}[xscale=1.1,yscale=4.21]
      \draw (16,0) -- (16,1);
%      \draw (3.35,.1)--(3.35,-.1) node[below] {$3.35$}
%            (-.1,.535) node[left] {$.53$} -- (.1,.535);
      \foreach \x in {0, .2, .4, .6, .8, 1}
%        \draw (.1, \x)--(0,\x) node[left,node font=\small] {$\x$};
 \draw (16.1, \x)--(16,\x) node[left,font=\small] {$\x$};
      \foreach \x in {16,17, 18, 19, 20, 21, 22, 23, 24, 25}
%        \draw (\x, .03636)--(\x, 0) node[below,node font=\small] {$\x$};
        \draw (\x, .03636)--(\x, 0) node[below,font=\small] {$\x$};
 %     \draw (0,0) node [left,node font=\small] {$0$};
 %     \draw (16,0) node [left,font=\small] {$16$};
      \path (15.2, .5) node {$p$}
            (20.5, -.15) node {$\lambda$};
      \draw (16,0) -- (26,0);
      \draw[thick,blue] plot file{many_glist.table};
      \draw[thick,red] plot file{many_rlistA.table};
      \draw[thick,red] plot file{many_rlistB.table};
      \draw[thick,red] plot file{many_rlistC.table};
      \draw[thick,red] plot file{many_rlistD.table};
      \draw[thick,red] plot file{many_rlistE.table};

      \draw[very thick,blue] (25,.6)--(25.7,.6) node[black,anchor=mid west] {interpretable};
      \draw[very thick,red] (25,.4)--(25.7,.4) node[black,anchor=mid west] {rogue};

%      \draw[very thick,blue] plot file{iota2_lower.table};
%      \draw[very thick,blue] plot file{iota2_lower.table};
    \end{tikzpicture}
    \vspace{-5mm}
\caption{A plot of the fixed points for the automaton in Example~\ref{ex.manyroots} with $\Poi(\lambda)$ child distribution.}
\label{fig:manyroots}
\end{figure}
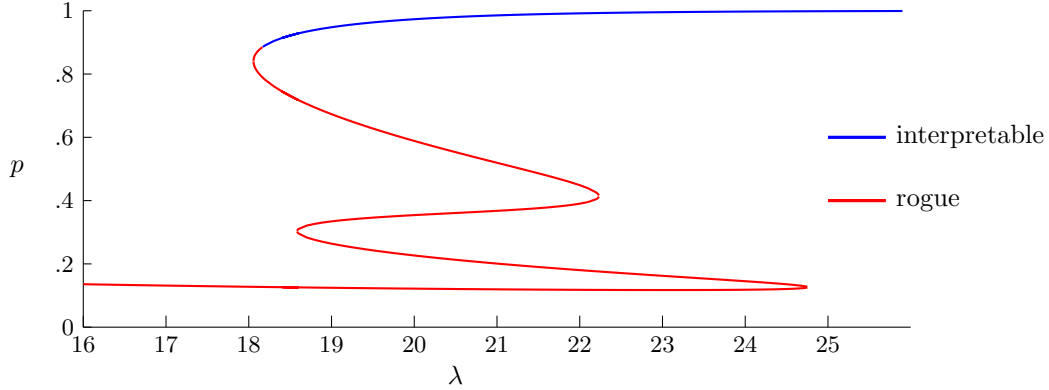

\end{example}

\subsection{First-order interpretations}\label{sec:first.order}

As we mentioned in the introduction, the papers \cite{PS1,PS2} investigated tree automata
and fixed points corresponding to statements of first-order logic.
The goal of \cite{PS2} is to study the probability that $T\sim\GW(\Poi(\lambda))$
satisfies some given first-order sentence of quantifier depth $k$.
Recall from Section~\ref{sec:connections} that there is an automaton on the set
of rank~$k$ types and an interpretation
given by sending a tree to its type. Assuming that the child distribution
is $\Poi(\lambda)$, the automaton distribution
map for the tree automaton is then shown to be a contraction \cite[Theorem~3.2]{PS2}, which
implies that it has a unique fixed point. This fixed point is also shown to be a smooth function
of $\lambda$ \cite[Theorem~2.4]{PS2}. Finally, this is applied to the original problem: since
the set of trees satisfying a given first-order sentence $\varphi$ of quantifier depth $k$ is the union
of a collection of rank~$k$ types, the probability that $T$ satisfies $\varphi$ is also a smooth
function of $\lambda$.

All of this work was done with no explicit mention of the concept of interpretations.
Our goal here is to put it more comfortably into this paper's framework.
We call $\iota\colon\Tt\to\Sigma$ a \emph{first-order interpretation} if each set
of trees $\{t\in\Tt\colon\iota(t)=\sigma\}$ for $\sigma\in\Sigma$ can be defined in the 
first-order language described in Section~\ref{sec:connections}.

To avoid reproving results in \cite{PS1,PS2}, we continue to assume that $\chi\sim\Poi(\lambda)$,
but we expect that the results should hold for general child distributions.
As usual, the assumption that a fixed point has no zero entries causes no loss of generality,
since the set $\Sigma$ can be shrunk and the automaton considered as one on a smaller set of states.

\begin{thm}\label{thm:first.order}
  Assume that $\chi\sim\Poi(\lambda)$, and let $\Sigma$ be any finite set of states.
  Let $\iota\colon\Tt\to\Sigma$ be an interpretation
  of an automaton $A$ corresponding to a fixed point $\vec\nu$, which we assume has strictly positive
  entries. If $\iota$ is a first-order interpretation, then $\vec\nu$ is the only fixed point
  of the automaton distribution map.
\end{thm}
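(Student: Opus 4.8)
The plan is to reduce everything to the contraction result of \cite{PS2}. Since $\iota$ is a first-order interpretation, for each $\sigma\in\Sigma$ the set $\{t\in\Tt\colon\iota(t)=\sigma\}$ is defined by a first-order sentence $\varphi_\sigma$; let $k$ be the maximum of the finitely many quantifier depths of the $\varphi_\sigma$. Two trees with the same rank-$k$ type agree on every $\varphi_\sigma$, so $\iota(t)$ depends on $t$ only through its rank-$k$ type, and I would write $\iota=f\circ\theta$, where $\theta\colon\Tt\to\Sigma_k$ sends a tree to its rank-$k$ type ($\Sigma_k$ being the finite set of rank-$k$ types) and $f\colon\Sigma_k\to\Sigma$. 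As recalled in Section~\ref{sec:connections}, $\theta$ is an interpretation of an automaton $A_k$ on $\Sigma_k$, compatible with $A_k$ for \emph{every} tree; let $\Psi$ and $\Psi_k$ be the automaton distribution maps of $A$ and $A_k$ under $\chi\sim\Poi(\lambda)$. By \cite[Theorem~3.2]{PS2}, $\Psi_k$ is a contraction, hence has a unique fixed point $\vec{\mu}$, and by Lemma~\ref{iota distribution} applied to $\theta$ we get $\theta(T)\sim\vec{\mu}$ for $T\sim\GW(\Poi(\lambda))$. Consequently $\iota(T)=f(\theta(T))$ has law $f_*\vec{\mu}$, where $f_*$ denotes push-forward of measures; since $\iota$ corresponds to $\vec\nu$, this gives $\vec\nu=f_*\vec{\mu}$, and as $\vec\nu$ has full support, $f$ maps $S\defeq\operatorname{supp}(\vec{\mu})$ onto all of $\Sigma$.

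The crux, and the step I expect to require the most care, is to establish that $f$ intertwines the two automata on the configurations that occur with positive probability: for every $\vec n\in\NN_0^{\Sigma_k}$ whose support lies in $S$, one has $f\bigl(A_k(\vec n)\bigr)=A(f_*\vec n)$, where $f_*\vec n\in\NN_0^{\Sigma}$ is the push-forward configuration. Indeed, if $v$ is any vertex whose children realize $\vec n$ in rank-$k$ type, then compatibility of $\theta$ with $A_k$ forces $\theta(t(v))=A_k(\vec n)$, so the colouring $u\mapsto\iota(t(u))=f(\theta(t(u)))$ assigns $v$ the colour $f(A_k(\vec n))$, whereas compatibility of this colouring with $A$ at $v$ demands that $v$ have the colour $A(f_*\vec n)$. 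Because $\chi=\Poi(\lambda)$ puts positive mass on every child count and $\vec{\mu}(s)>0$ for $s\in S$, with positive $\GW(\Poi(\lambda))$-probability the root of $T$ has children realizing $\vec n$; were $f(A_k(\vec n))\neq A(f_*\vec n)$, the colouring $u\mapsto\iota(T(u))$ would fail to be compatible with $A$ at the root on this positive-probability event, contradicting that $\iota$ is an interpretation. The same positivity remark, applied to the identity $\Psi_k\vec{\mu}=\vec{\mu}$, shows $A_k(\vec n)\in S$ whenever $\vec n$ is supported on $S$, so that the simplex $D(S)$ of probability distributions on $S$ is invariant under $\Psi_k$, and the displayed identity upgrades to $\Psi\circ f_*=f_*\circ\Psi_k$ on $D(S)$.

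Finally I would conclude by iterating the contraction. Let $\vec\rho$ be an arbitrary fixed point of $\Psi$. Since $f$ maps $S$ onto $\Sigma$, there is $\vec{x}_0\in D(S)$ with $f_*\vec{x}_0=\vec\rho$; set $\vec{x}_{n+1}=\Psi_k\vec{x}_n$. All $\vec{x}_n$ lie in $D(S)$ by invariance, and $f_*\vec{x}_n=\vec\rho$ for every $n$ by induction, using $f_*\vec{x}_{n+1}=f_*\Psi_k\vec{x}_n=\Psi f_*\vec{x}_n=\Psi\vec\rho=\vec\rho$. As $\Psi_k$ is a contraction, $\vec{x}_n\to\vec{\mu}$, and continuity of $f_*$ yields $\vec\rho=f_*\vec{\mu}=\vec\nu$. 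Hence every fixed point of $\Psi$ equals $\vec\nu$, which is the assertion.
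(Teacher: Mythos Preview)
Your argument is correct and takes a genuinely different route from the paper's proof. The paper invokes \cite[Lemma~5.6]{PS2}, which says that $\P[T\in\Ttaut_n]\to 1$ where $\Ttaut_n$ is the set of trees on which $\iota$ is tautologically determined by the first $n$ levels. It then argues directly that on $\Ttaut_n$, \emph{every} colouring compatible with $A$ assigns the root the value $\iota(t)$---the full support of $\vec\nu$ guarantees that every boundary configuration occurs with positive probability, so the automaton's output at the root cannot depend on it. Applying this to the random state tree for any competing fixed point $\vec\nu'$ forces $\omega(R_T)=\iota(T)$ a.s., hence $\vec\nu'=\vec\nu$.

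Your approach instead factors $\iota=f\circ\theta$ through the rank-$k$ type automaton $A_k$, establishes the intertwining $f\circ A_k=A\circ f_*$ on configurations supported in $S=\operatorname{supp}(\vec\mu)$, and then exploits the contraction of $\Psi_k$ from \cite[Theorem~3.2]{PS2} rather than the tautological-determination lemma derived from it. The lifting-and-iterating step is a clean way to transport the uniqueness from $\Psi_k$ to $\Psi$. What your approach buys is a structural picture: it exhibits $A$ (restricted to the relevant configurations) as a quotient of $A_k$, and it never touches the random state tree. What the paper's approach buys is that it avoids setting up the intertwining and the support bookkeeping; it works entirely within $A$ itself, using the tautological-determination statement as a black box that directly constrains compatible colourings. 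Both arguments use the full-support hypothesis on $\vec\nu$ at the analogous pivotal moment---you to get surjectivity of $f|_S$ for the lift, the paper to get that every level-$n$ configuration is realized.
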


\begin{proof}
  Let $\Ttaut_n\subseteq\Tt$ consist of all trees $t$ on which $\iota(t)$ is tautologically determined
  by $t|_n$. That is, $\Ttaut_n$ consists of all trees $t$  
  such that $\iota(t)=\iota(t')$
  for all $t'\in[t]_n$. It follows from \cite[Lemma~5.6]{PS2} that 
  \begin{align}\label{eq:taut.det}
    \lim_{n\to\infty}\P[T\in \Ttaut_n]= 1.
  \end{align}
  
  Let $(t,\tau)$ be an arbitrary tree whose colouring is compatible with the automaton $A$.
  We claim that if $t\in\Ttaut_n$, then $\tau(R_t)=\iota(t)$.
  Indeed, condition on $T\in[t]_n$. Under this conditioning, the vector $\bigl(\iota(T(v))\bigr)_{v\in L_n(t)}$
  is i.i.d.\ $\vec\nu$. Since $\vec\nu$ has strictly positive entries, this vector takes on each
  value in $\Sigma^{\ell_n(t)}$ with positive probability.
  Since $t\in\Ttaut_n$, we have $\iota(T)=\iota(t)$ a.s.
  But $\iota(T)$ is given by iteratively applying the automaton to $\bigl(\iota(T(v))\bigr)_{v\in L_n(t)}$,
  from which we can conclude that applying the automaton in this way to \emph{any} vector
  in $\Sigma^{\ell_n(t)}$ yields $\iota(t)$. Thus, since $\tau(R_t)$ is given by applying
  the automaton to $\bigl(\iota(t(v))\bigr)_{v\in L_n(t)}$, it too is equal to $\iota(t)$.
  
  Now, suppose that $\vec\nu'$ is another fixed point of the automaton map, and let $(T,\omega)$
  be the random state tree associated with $\vec\nu'$. 
  If $T\in\Ttaut_n$ for any $n$, then $\omega(R_T)=\iota(T)$ by the claim we have just proved.
  By \eqref{eq:taut.det}, it holds with probability one that $T\in\Ttaut_n$ for some value of $n$
  (observe that $T\in\Ttaut_n$ forms an increasing sequence of events).
  Hence $\omega(R_T)=\iota(T)$ a.s.
  Thus $\omega(R_T)\sim\vec\nu$, since $\iota(T)\sim\vec\nu$. But by the definition of the random
  state tree, $\omega(R_T)\sim\vec\nu'$, demonstrating that $\vec\nu=\vec\nu'$.
\end{proof}

\section{Further questions}\label{sec:open}

The most straightforward open problem is to extend Theorem~\ref{main 2 colours} 
to $3\leq \abs{\Sigma}<\infty$.
Theorem~\ref{thm:k.state} already provides one direction of the theorem, leaving
the critical and supercritical cases. As we discussed in Remarks~\ref{rmk:critical.case}
and \ref{rmk:kstate.supercritical}, it might be possible to adapt the two-state proofs.
In both cases, the difficulty is that the pivot tree need not be positive regular. In fact,
it seems to us that when the pivot tree is positive regular, all proofs go through as is,
and Theorem~\ref{main 2 colours} holds in general for $\abs{\Sigma}<\infty$ (though we have not
checked every detail).

Beyond this, two generalizations interest us. First, extending the results to
infinite state spaces would allow the theory
to address situations like those considered in \cite{MS}.  Second, one could consider
randomized automata: give
each vertex~$v$ an independent source of randomness $X_v$ and then allow the automaton
to determine the state of a vertex from the states of its children together with $X_v$.
This situation often arises in practice and is the model considered in \cite{AB}.
Extending the theory to this case might yield answers to questions about endogeny,
as discussed in Section~\ref{sec:connections}.

In a different direction, we wonder what configurations
of fixed points are possible.
For example, when $3\leq\abs{\Sigma}<\infty$, can an automaton have infinitely many fixed
points? (This can be ruled out when $\abs{\Sigma}=2$ by arguing that the automaton distributional
map is analytic.) In the case $\abs{\Sigma}=2$, for any specified finite set of rogue and
interpretable fixed points, is there an automaton and a child distribution to match them?
Even restricting ourselves to two-state monotone automata, it is not clear which sets of rogue
and interpretable fixed points can occur. 

Section~\ref{sec:first.order} also raises some questions.
For example, Theorem~\ref{thm:first.order} provides a condition on an 
interpretation that makes the corresponding tree automaton
have a unique fixed point. This suggests
the problem of giving conditions on the tree automaton itself that force its 
automaton distribution map to have a unique fixed point.

\section*{Acknowledgments}

We thank Joel Spencer, who set this work in motion and generously advised us.
We also thank Leonid Libkin for guiding us through the literature on tree automata
in logic.

\bibliography{pivotbibfile}

\end{document}